\theoremstyle{definition}
\newtheorem{definition}{Definition}[section]
\newtheorem{remark}[definition]{Remark}
\theoremstyle{plain}
\newtheorem{theorem}[definition]{Theorem}
\newtheorem{proposition}[definition]{Proposition}
\newtheorem{corollary}[definition]{Corollary}
\newtheorem{lemma}[definition]{Lemma}
\def\dim{\rm dim}
\def\be{\begin{equation}}
\def\ee{\end{equation}}
\def\ben{\begin{displaymath}}
\def\een{\end{displaymath}}
\def\baa{\begin{eqnarray}}
\def\eaa{\end{eqnarray}}
\def\ba{\begin{array}}
\def\ea{\end{array}}
\def\la{\label}
\def\p{\partial}
\def\nd{w}
\def\CC{{\mathcal C}}
\def\Ch{\widehat {C}}
\def\nd{w}
\def\C{{\mathbb C}}
\def\R{{\mathbb R}}
\def\Z{{\mathbb Z}}
\def\Mcal{{\mathcal M}}
\def\Hc{{\mathcal H}}
\def\bk{{\bf k}}
\def\Pic{{\rm Pic}}
\def\rk{{\rm rk} \,}
\def\Acal{{\mathcal A}}
\def\Cb{\overline{{\mathcal C}}}
\def\M{{\mathcal M}}
\def\Mb{\overline{{\mathcal M}}}
\def\2x2{{\left(\!\!\begin{array}{cc}a&b\\c&d\\\end{array}\!\!\right)}}
\def\f{\frac}
\def\e{\epsilon}
\def\a{\alpha}
\def\b{\beta}
\def\deg{{\rm {\rm deg}}}
\def\O{\Omega}
\def\Pcal{{\mathcal P}}
\def\gh{\hat{g}}
\def\xh{\hat{x}}
\def\ka{\kappa}
\def\L{\Lambda}
\def\l{\lambda}
\def\dim{{\rm dim}}
\def\gh{\widehat{g}}
\def\Mgn{\mathfrak{M}_{g}^{(n)}}
\def\Mgno{\overline{\mathfrak{M}}_{g}^{(n)}}
\def\H1c{H^{(1)}}
\def\H1h{H_{(1)}}
\def\H0c{H^{(0)}}
\def\H0h{H_{(0)}}
\def\pt{\lambda_{PT}}
\def\dim{{\rm dim}}
\begin{document}

\title[Tau functions, Prym-Tyurin classes and degenerate differentials]{Tau functions, Prym-Tyurin classes and loci of degenerate differentials}
\author{Dmitri Korotkin}
\address{Department of Mathematics and Statistics, Concordia University, 1455 de Maisonneuve W., Montreal, Qu\'ebec, 
Canada H3G 1M8}
\email{korotkin@mathstat.concordia.ca}
\author{Adrien Sauvaget}
\address{IMJ-PRG, Universit\'e Pierre et Marie Curie\\ 4 place Jussieu\\ 75005 Paris, France}
\email{adrien.sauvaget@imj-prg.fr}
\author{Peter Zograf}
\address{Steklov Mathematical Institute, Fontanka 27, Saint Petersburg 191023
Russia, and Chebyshev Laboratory, Saint Petersburg State University, 14-th Line
V.O. 29, Saint Petersburg 199178 Russia}
\email{zograf@pdmi.ras.ru}

\keywords{Moduli space of curves, $n$-differentials, cyclic coverings, Bergman tau function, integrable systems}
\subjclass[2010]{14H15, 14F10, 14H70, 30F30, 14C22}
\date{\today}

\begin{abstract}
We study the rational Picard group of the projectivized moduli space $P\Mgno$ of holomorphic $n$-differentials on complex genus $g$ stable curves. We define
$n-1$ natural classes in this Picard group that we call {\em Prym-Tyurin} classes. We express these classes as linear combinations of boundary divisors and the divisor of $n$-differentials with a double zero. We give two different proofs of this result, using two alternative approaches: an analytic approach that involves the Bergman tau function and its vanishing divisor and an algebro-geometric approach that involves cohomological computations on the universal curve. 
\end{abstract}

\maketitle

\tableofcontents

\section{Introduction}

\subsection{Moduli space of $n$-differentials} 

Let $g$ and $n$ be positive integers with $g\geq 2$. Let $\M_g$ (respectively $\Mb_g$) be the moduli space of smooth  (respectively stable nodal) complex curves. Denote by $D_0 \subset \Mb_g$ the closure of the locus of stable curves with one nonseparating node. Further, denote by $D_i \subset \Mb_g$, $1 \leq i \leq [g/2 ]$, the closure of the locus of curves with a separating node and two irreducible components of genera $i$ and $g-i$. Finally, denote by $\pi: \CC_g \to \M_g$ or $\pi:\Cb_g\to \Mb_g$ the universal curve and by $\omega = \omega_{\Cb_g/\Mb_g}$ the relative dualizing sheaf. 

Let 
$$
\O^{(n)}_g = R^0 \pi_* \omega^{\otimes n}
$$
be the direct image of the $n$th tensor power of $\omega$. Using the Riemann-Roch formula and Serre's duality, one can easily check that $h^1(C,\omega^{\otimes n}_C)=0$ if $n\geq 2$ and $h^1(C,\omega_C)=1$ for any stable curve $C$. Thus $\O^{(n)}_g$ is a vector bundle for any $n \geq 1$. For $n=1$ we write $\O_g$ instead of $\O^{(1)}_g$ and call it the {\em Hodge bundle}. The Riemann-Roch formula implies:
$$
\rk \, \O^{(n)}_g=\left\{ \begin{array}{cr}g& \text{if $n=1$;}\\(2n-1)(g-1)& {\rm otherwise.}\end{array}\right.
$$

We define the following elements in the rational Picard group of $\Mb_g$:
\begin{itemize}
\item the Chern class $\lambda_n \in \Pic(\Mb_g)$ of the determinant line bundle of $\O_g^{(n)}$;
\item the Poincar\'e dual classes $\delta_0,\ldots,\delta_{[g/2]} \in \Pic(\Mb_g)$ of the boundary divisors $D_0,D_1,\ldots,D_{[g/2]} \subset \Mb_{g}$.
\end{itemize}
For $k=1$ we write $\lambda$ instead of $\lambda_1$ and call this class the {\em Hodge class}.

\begin{definition}
The total space of the vector bundle $\O^{(n)}_g$ is denoted by $\Mgno$ and is called the {\em space of $n$-differentials}.
\end{definition}

The points of $\Mgno$ correspond to equivalence classes of pairs $(C,\nd)$, where $C$  is a stable genus $g$ algebraic curve, and $\nd$ is an $n$-differential on $C$. We recall that an $n$-differential $\nd$ on $C$ is a meromorphic $n$-differential on each irreducible component of the normalization of $C$ such that
\begin{itemize}
\item $\nd$ can only have poles at the preimages of the nodes;
\item these poles are of order at most~$n$;
\item at every node the $n$-residues of $\nd$ at the poles satisfy
\begin{equation*}
{\rm{res}}_{p_1}(\nd) = (-1)^n{\rm{res}}_{p_2}(\nd)
\end{equation*}
where $p_1$ and $p_2$ are the two preimages of the node.
\end{itemize}

We denote by $\nu:\Mgno\to \Mb_g$ the forgetful map and we will use the same notation for its restriction $\nu:\Mgn\to \M_g$ to the locus of smooth curves. We also denote by $\tilde{\nu}:P\Mgno\to \Mb_g$ the projectivized space of $n$-differentials.

In this paper we study the Picard group of $P\Mgno$. We will work over rational numbers, so that $\Pic$ will always denote the {\em rational} Picard group.

By abuse of notation we will denote by $\lambda$, $\lambda_n$ and $\delta_i$ both the elements of $\Pic(\Mb_g)$ and their pull-backs in $\Pic(P\Mgno)$. In addition, we  introduce the first Chern class $\psi \in \Pic(P\Mgno)$ of the tautological line bundle $L\to P\Mgno$.

The following lemma is standard (cf., e.g, \cite{MRL}, Lemma 1).
\begin{lemma}\label{pic}
The classes $\lambda, \psi,\delta_0,\ldots,\delta_{[ g/2]}$ form a basis of $\Pic(P\Mgno)$.
\end{lemma}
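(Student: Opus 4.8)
The plan is to exhibit $P\Mgno$ as a projective bundle over $\Mb_g$ and invoke the standard structure theorem for the Picard group of such bundles. Concretely, $\tilde\nu : P\Mgno \to \Mb_g$ is the projectivization $\mathbb{P}(\O_g^{(n)})$ of the vector bundle $\O_g^{(n)}$, whose rank is $g$ for $n=1$ and $(2n-1)(g-1)$ for $n\geq 2$ by the Riemann–Roch computation recalled above; in either case the rank is at least $2$ since $g\geq 2$, so the fibers are honest positive-dimensional projective spaces. For a projective bundle $\mathbb{P}(E)\to X$ with $X$ smooth (or with rational Picard group under control), one has the projective bundle formula
\[
\Pic\big(\mathbb{P}(E)\big) \;=\; \tilde\nu^*\Pic(X)\;\oplus\;\Z\cdot\psi,
\]
where $\psi=c_1(L)$ is the first Chern class of the tautological (relative $\Oc(1)$) line bundle $L\to\mathbb{P}(E)$. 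Tensoring with $\Q$ gives the same statement for rational Picard groups.

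**Carrying it out.**

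First I would recall (or cite, as the paper does, \cite{MRL}) that $\Pic(\Mb_g)\otimes\Q$ is freely generated by $\lambda=\lambda_1$ and the boundary classes $\delta_0,\ldots,\delta_{[g/2]}$; this is the classical theorem of Arbarello–Cornalba for $g\geq 3$, with the well-documented minor adjustments for $g=2$. Second, I would identify $P\Mgno$ with $\mathbb{P}(\O_g^{(n)})$: a point is a pair $(C,w)$ with $w$ a nonzero $n$-differential taken up to scale, which is exactly a line in the fiber $H^0(C,\omega_C^{\otimes n})=\O_g^{(n)}|_{[C]}$, and this identification is algebraic and compatible with $\tilde\nu$ and $\nu$. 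Third, I would apply the projective bundle formula to conclude
\[
\Pic(P\Mgno) \;=\; \tilde\nu^*\Pic(\Mb_g)\;\oplus\;\Q\cdot\psi
\;=\;\Q\lambda\oplus\Q\delta_0\oplus\cdots\oplus\Q\delta_{[g/2]}\oplus\Q\psi,
\]
which is precisely the assertion that $\lambda,\psi,\delta_0,\ldots,\delta_{[g/2]}$ form a basis. The class $\psi$ is well-defined because $L\to P\Mgno$ is a genuine line bundle on the whole projectivized space, and $\psi$ is not a pullback from the base (it restricts to the hyperplane class on each fiber $\mathbb{P}^{\rk-1}$), so it is genuinely independent of the pulled-back classes.

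**The main subtlety.**

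The one point needing care is that $\Mb_g$ is a stack (or, if one prefers, a singular variety via its coarse space), so one should phrase the projective bundle formula in the appropriate category — either work with $\mathbb{Q}$-Picard groups of Deligne–Mumford stacks, where the formula holds verbatim, or pass to the coarse moduli space and note that rationally nothing is lost. I expect this bookkeeping to be the only real obstacle; everything else is a direct application of the structure theorem, which is why the paper rightly calls the lemma ``standard.'' A secondary remark: one must check $\O_g^{(n)}$ is indeed a vector bundle of the stated rank on all of $\Mb_g$, but this was already established in the text preceding the lemma via $h^1(C,\omega_C^{\otimes n})=0$ for $n\geq2$ and the constancy of $h^0(C,\omega_C)=g$ for $n=1$, so the projectivization has constant fiber dimension and the bundle formula applies globally.
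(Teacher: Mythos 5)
Your proof is correct and is essentially the same standard argument the paper invokes by citing \cite{MRL}, Lemma 1: identify $P\Mgno$ with the projectivization $\mathbb{P}(\O_g^{(n)})$ over $\Mb_g$, apply the projective bundle formula for rational Picard groups of Deligne--Mumford stacks to split off $\Q\psi$, and use the Harer/Arbarello--Cornalba description of $\Pic(\Mb_g)\otimes\Q$. The only point needing care --- shared with the paper's own statement --- is that $\lambda,\delta_0,\ldots,\delta_{[g/2]}$ are freely generating only for $g\geq 3$, since for $g=2$ Mumford's relation $10\lambda=\delta_0+2\delta_1$ holds on $\Mb_2$ and pulls back to $P\Mgno$, so the ``minor adjustments for $g=2$'' you allude to actually amount to replacing ``basis'' by ``generating set'' (or restricting to $g\geq 3$) rather than a harmless bookkeeping step.
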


The goal of this paper is to define the Prym-Tyurin classes in  $\Pic(P\Mgno)$ and express them in the above basis.

\subsection{Stratification of $P\Mgn$}
\label{Ssec:strata}

The space of $n$-differentials is naturally stratified according to the multiplicities of the differential's zeros.

Let $\bk=(k_1,\ldots,k_m)$ be a partition of $n(2g-2)$. We denote by $\Mgn[\bk]\subset \Mgn$ the locus of pairs $(C,\nd)$ such that the $n$-differential $\nd$ has $m$ pairwise distinct zeros of orders exactly $k_i$. This locus is $\C^*$-invariant, thus we can also define its projectivization $P\Mgn[\bk]\subset P\Mgn$.  The space $P\Mgn$ is the disjoint union of the strata $P\Mgn[\bk]$ for all partitions $\bk$ of $n(2g-2)$. The following properties of the strata were proved in~\cite{Lan} and~\cite{Sch}.
\begin{itemize}
\item Each stratum $P\Mgn[\bk]$ is smooth. 
\item If at least one $k_i$ is not divisible by $n$ then either $P\Mgn[\bk]$ is empty or it has pure dimension $2g-3+m$.
\item If all $k_i$'s are divisible by~$n$ then $P\Mgn[\bk]$ has at least one irreducible component of dimension $2g-2+m$. A differential $(C, \nd)$ lies in a component like that if and only if $\nd$ is the $n$th power of a holomorphic differential. The stratum $P\Mgn[\bk]$ may also have irreducible components of dimension $2g-3+m$ composed of $n$-differentials that are not $n$th powers.
\end{itemize}
We denote by $P\Mgno[\bk]$ the closure of $P\Mgn[\bk]$ in $P\Mgno$. In particular we have $P\Mgno[\mathbf{1}] = P \Mgno$, where $\mathbf{1}$ stands for the partition $(1,1,\ldots,1)$.
\begin{definition} \label{Def:degenerate}
Let $g,n\geq 2$. The {\em divisor of degenerate $n$-differentials} is defined as 
$$
D_{\rm deg}= 
\left\{
\begin{array}{lll}
P\Mgno[2,1,\ldots,1], & \mbox{ if } & (g,n) \ne (2,2),\\
P\Mgno[2,1,1] + 2 \cdot P \Mgno[2,2], & \mbox{ if } & g=n=2.
\end{array} 
\right.
$$
We denote by $\delta_{\rm deg}$ the cohomology class that is Poincar\'e dual of $D_{\rm deg}$.
\end{definition} 

\begin{remark} Heuristically, $D_{\rm deg}$ is the divisor of $n$-differentials with a double zero, and for $(g,n)\neq (2,2)$ it is just the closure of 
$P\Mgn[2,1,\ldots,1]$ in $P\Mgno$. In the case $g=n=2$, however, $D_{\rm deg}$ has a special component consisting of squares of holomorphic differentials. This is because in genus 2 each quadratic differential is invariant with respect to the hyperelliptic involution. The four simple zeroes of $w$ are pairwise equivalent under the hyperelliptic involution, and when two non-equivalent zeroes coalesce, the other two ones also coalesce, giving a differential with two double zeroes. Since every such differential has two square roots that differ by a sign, the divisor $P\Mgno[2,2]$ comes with a factor of 2. (Note that when two equivalent zeroes coalesce, the differential in the limit has one double zero at a Weierstrass point and two simple zeroes.) 
\end{remark}

\subsection{First definition of Prym-Tyurin classes}\label{introfirst}
\label{Ssec:IntroPrym}

Let $(C,\nd)$ be a point in the projectivized moduli space $P\Mgn[\mathbf{1}]$. 
One can define a canonical cyclic ramified covering $f:\Ch\to C$ of degree $n$, where
$$\Ch=\{(x,v)|x\in C,\;v\in T_x^*C,\; v^n=\nd\}.$$  
This covering is completely ramified over the zeros of $\nd$. The curve $\Ch$ is smooth of genus $\gh=n^2(g-1)+1$. 
It comes with a {\em canonical holomorphic differential} $v$ given by $v(x,v)=v$. This differential $v$ on $\Ch$ satisfies $v^n=f^*\nd$. 
 
The action of $\Z/n\Z$ on the covering is given by $\rho^{k}:(x,v)\mapsto (x,\rho^k v)$, where $\rho=e^{\frac{2\pi\sqrt{-1}}{n}}$. We denote by $\sigma:\Ch\to\Ch$ the automorphism of $\Ch$ corresponding to $k=1$. Now consider the natural map 
\begin{align*}
\hat{\nu}  :  P\Mgn[\mathbf{1}] &\to  \M_{\gh},\\
(C,\nd) & \mapsto  \Ch
\end{align*}
($\Ch$ remains the same when we multiply $\nd$ by a non-zero constant). We consider the pull-back of the Hodge bundle $\Omega_{\gh}$ by the map $\hat{\nu}$. The automorphism $\sigma$ induces an endomorphism $\sigma^*$ of the vector bundle $\hat{\nu}^*\Omega_{\gh}$ given by:  $\left((C,\nd), u\right)\mapsto \left((C,\nd),\sigma^*u\right)$, where $u$ is an element of $H^0(\Ch,\omega_{\Ch})$. The endomorphism $\sigma^*$ satisfies $(\sigma^*)^n={\rm Id}$. Hence we have a decomposition 
\be\label{decompo}
\hat{\nu}^*\Omega_{\gh}=\bigoplus_{k=0}^{n-1}\Lambda^{(k)},
\ee 
where $\Lambda^{(k)}$ is the eigenbundle of $\hat{\nu}^*\Omega_{\gh}$ corresponding to the eigenvalue $\rho^k=e^{\frac{2\pi\sqrt{-1} k}{n}}$. 
\begin{remark}
The space $\Lambda^{(k)}$ is a vector bundle because the dimension of the fiber of $\Lambda^{(k)}$ is upper-continuous for all $k$ and $\hat{\nu}^*\Omega_{\gh}$ is a vector bundle thus the rank of each $\Lambda^{(k)}$ is constant.
\end{remark}

\begin{definition}
The vector bundles $\Lambda^{(k)}$ are called the {\em Prym-Tyurin vector bundles}. The {\em Prym-Tyurin class} $\pt^{(k)}$ is the first Chern class $c_1(\Lambda^{(k)}) \in \Pic(P\Mgn[\mathbf{1}])$. 
\end{definition}
For $n=2$ the study of vector bundles of this type was initiated by Prym~\cite{Prym} and for $n>2$ by A.~N.~Tyurin~\cite{Tyurin}. 

\begin{remark}
By abuse of notation we denote in the same way the {\em determinant line bundle} $\pt^{(k)}=\det \Lambda^{(k)}$ and its class in the Picard group.
\end{remark}

We will see in Section~\ref{PTND} that the map $\hat{\nu}: \Mgn \to \M_{\hat{g}}$, used to define the Prym-Tyurin vector bundles, admits no natural extension to $P\Mgno$. Nonetheless, in the next section we extend the definition of the Prym-Tyurin class to $P\Mgno$ by a construction involving the space of admissible covers and an intermediate bigger stack.

\subsection{Admissible coverings and Prym-Tyurin bundles}


Let $N=2n(g-1)$ be the degree of $\omega^{\otimes n}$.
We denote by ${\rm Hur}_g^n$ the moduli space whose geometric points are isomorphism classes of pairs $(f:\Ch\to C, \sigma)$ where:
\begin{itemize}
\item $C$ and $\Ch$  are smooth curves;
\item $f$ is a cyclic ramified covering of degree $n$ which is totally ramified over $N$ distinct points of~$C$;
\item $\sigma$ is an automorphism of $\Ch$ that commutes with $f$. 
\end{itemize}

We denote by $\overline{\rm Hur}^n_g$ the compactification of this space by admissible coverings (see~\cite{HM}). The space of admissible coverings has two forgetful maps (source and target of the covering):
$$
\xymatrix{
&\overline{\rm Hur}^n_g \ar[ld]_{{\rm target}} \ar[rd]^{{\rm source}}&\\
\Mb_{g,N}/S_N && \Mb_{\gh}.
}
$$
We consider the pull-back ${{\rm source}}^*\Omega_{\gh}$ of the Hodge bundle under the source map. This vector bundle is endowed with the automorphism
\begin{eqnarray*}
\sigma^* : \left((\Ch\to C, \sigma), u\right) &\mapsto & \left((\Ch\to C, \sigma), \sigma^*u\right).
\end{eqnarray*}
Thus, as in~\eqref{decompo}, we have the decomposition 
\be\label{decompo'}
{{\rm source}}^*\Omega_{\gh}=\bigoplus_{k=0}^{n-1}\Lambda^{(k)},
\ee 
where $\Lambda^{(k)}$  is the eigenbundle corresponding to the eigenvalue $\rho^k=e^{\frac{2\pi\sqrt{-1} k}{n}}$.

 In the previous section we have constructed an embedding $i:P\Mgn[\mathbf{1}]\hookrightarrow \overline{\rm Hur}^n_g$ and, by construction,  the pull-back $i^* \Lambda^{(k)}$ is, indeed, isomorphic to the Prym-Tyurin bundle $\Lambda^{(k)}$ over $P\Mgn[\mathbf{1}]$ as defined in the previous section.
 
 We see that the Prym-Tyurin vector bundles, and therefore the Prym-Tyurin classes, have a natural extension to the compactification of $P\Mgn[\mathbf{1}]$ by admissible covers. We would like, however, to extend the Prym-Tyurin classes to a different compactification of $P\Mgn[\mathbf{1}]$, namely, to $P\Mgno$. To do that we will construct a bigger space with a projection to both $\overline{\rm Hur}^n_g$ and $P\Mgno$ and use the push-forward of a pull-back.

\subsection{Space of admissible differentials}\label{sec:adm}

\begin{definition}\label{def:admdiff}
Let
$$
I : \Mgn[\mathbf{1}] \hookrightarrow \Mgno \times_{\Mb_{g}} \overline{\rm Hur}^n_g
$$ 
be the product of the two natural embeddings. The {\em moduli space of admissible $n$ differentials} $X(g,n)$, is the Zariski closure of the image of $I$ in $\Mgno \times \overline{\rm Hur}^n_g$.
\end{definition}

\begin{remark}\label{remincidence}
The space of admissible $n$-differentials is a compactification of the stratum $\Mgn[\mathbf{1}]$. In~\cite{Grushevsky-Moeller2016}, 
the authors introduced and described another compactification, the {\em incidence variety compactification}.  The incidence variety is obtained by replacing the space $\overline{\rm Hur}^n_g$ by the space $\Mb_{g,N}/S_N$ in Definition~\ref{def:admdiff}. We denote the incidence variety by $X^{\rm inc}_{g,n}$. 

There is a birational and finite map from $X_{g,n}$  to $X^{\rm inc}_{g,n}$ , however this map is not an isomorphism (see Example 4.3 of~\cite{Grushevsky-Moeller2016}).  We will make use of  $X^{\rm inc}_{g,n}$ once in this text (proof of Lemma~\ref{P3}). 
\end{remark}

The space $X(g,n)$ has two natural morphisms: ${\rm adm} :X(g,n)\to  \overline{\rm Hur}^n_g$ and ${\rm diff} : X(g,n)\to P\Mgno$. We summarize the notation on the following diagram.
$$
\xymatrix@C=3.5em{
&\bigoplus \Lambda^{(k)}\ar[d] \ar[r]^{\;\;{\rm source}^*} & \Omega_{\gh} \ar[d] \\
X(g,n)\ar[r]^{\rm adm} \ar[d]_{\rm diff} & \overline{\rm Hur}^n_g \ar[r]^{\rm source} & \Mb_{\gh} \\
P\Mgno
}
$$
By construction the forgetful map ${\rm diff} :X(g,n)\to P\Mgno$ is birational and its restriction to $P\Mgn[\mathbf{1}]$ is an isomorphism onto its image. In general the space $X(g,n)$ is not normal; thus the push-forward of classes in the Picard group under ${\rm diff}$ is ill-defined. However we will prove the following proposition in Section~\ref{cyclicsec}.

\begin{proposition}\label{pr:open}
There exist two smooth open-dense substacks $j:V\hookrightarrow X(g,n)$ and $j':U\hookrightarrow P\Mgno$ fitting into the commutative diagram
$$
\xymatrix{
V \ar[d]\ar[r]^{j \;\;\;\;} & X(g,n) \ar[d]^{\rm diff}\\ 
U\ar[r]_{j' \;\;\;\;}& P\Mgno
}
$$
such that
\begin{itemize}
\item the map $V\to U$ is an isomorphism on the underlying coarse spaces;
\item the complement of $U$ is of codimension at least  $2$ in $P\Mgno$.
\end{itemize}
\end{proposition}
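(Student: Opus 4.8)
The plan is to locate the non-smooth, non-isomorphism locus of $\mathrm{diff}: X(g,n)\to P\Mgno$ explicitly and to check it has the required codimension. First I would recall that $\mathrm{diff}$ is birational with $\mathrm{diff}^{-1}(P\Mgn[\mathbf{1}])\xrightarrow{\ \sim\ } P\Mgn[\mathbf{1}]$, so all the pathology is concentrated over the complement $P\Mgno\setminus P\Mgn[\mathbf{1}]$. That complement is itself stratified: it is the union of the boundary $\tilde\nu^{-1}(\partial\Mb_g)$ and the loci $P\Mgno[\bk]$ for $\bk\neq\mathbf{1}$. The codimension-$1$ pieces of this complement are exactly the boundary divisors $\tilde\nu^{-1}(D_i)$ and the divisor $D_{\mathrm{deg}}$ of Definition~\ref{Def:degenerate} (all other strata $P\Mgno[\bk]$ have codimension $\geq 2$ by the dimension count in Section~\ref{Ssec:strata}). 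So the heart of the argument is: over a generic point of each of these divisors, both $X(g,n)$ and $P\Mgno$ are smooth and $\mathrm{diff}$ is an isomorphism. Then one sets $U$ to be the union of $P\Mgn[\mathbf{1}]$ together with these generic loci, throwing away whatever remains, and $V = \mathrm{diff}^{-1}(U)$.

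The key steps, in order, are as follows. (1) Smoothness of $P\Mgno$ in codimension $1$: near a generic point of $\tilde\nu^{-1}(D_i)$ this is standard since $\Mb_g$ is smooth there and $\O^{(n)}_g$ is a vector bundle (Section~1.1), so its projectivization is smooth; near a generic point of $D_{\mathrm{deg}}$ one uses that the open stratum $P\Mgn[2,1,\dots,1]$ is smooth (Section~\ref{Ssec:strata}) and lies in the smooth locus $P\Mgn$. (2) Smoothness of $X(g,n)$ over those same generic points: the fibre of $\mathrm{diff}$ over a differential $(C,w)$ records the limiting admissible cover, and over a \emph{generic} boundary or degenerate differential the cyclic cover $f:\Ch\to C$ degenerates in the mildest possible way — the branch points stay distinct away from the node / the two coalescing zeros are the only ramification collision — so the admissible cover is uniquely determined and the fibre is a single reduced point. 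Here I would invoke the comparison with the incidence variety $X^{\mathrm{inc}}_{g,n}$ of Remark~\ref{remincidence} and the fact that $X_{g,n}\to X^{\mathrm{inc}}_{g,n}$ is finite birational, to reduce smoothness of $X(g,n)$ at such points to the analogous (known) statement for stable pointed curves $\Mb_{g,N}$, which is smooth. (3) $\mathrm{diff}$ is an isomorphism over each such generic point: it is birational and a bijection on points near there by (2), and a birational morphism between smooth varieties that is quasi-finite and a bijection on a neighbourhood is a local isomorphism (Zariski's main theorem). (4) Assemble $U$: set $U = P\Mgn[\mathbf{1}]\,\cup\,(\text{smooth-and-iso locus over }\tilde\nu^{-1}(D_i))\,\cup\,(\text{smooth-and-iso locus over }D_{\mathrm{deg}})$; its complement meets every codimension-$1$ component of $P\Mgno\setminus P\Mgn[\mathbf{1}]$ in a proper closed subset and contains no other divisor, hence has codimension $\geq 2$. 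Put $V=\mathrm{diff}^{-1}(U)$; then $V$ is smooth by (1)–(2), $V\to U$ is an isomorphism of coarse spaces by (3), and we may shrink to open substacks as needed.

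The main obstacle I expect is step (2): controlling the fibres of $\mathrm{diff}$, equivalently the limiting admissible covers, at a \emph{generic} point of $D_{\mathrm{deg}}$ and of each $\tilde\nu^{-1}(D_i)$, and in particular ruling out that $X(g,n)$ acquires a singularity there even though $P\Mgno$ is smooth. One has to understand precisely which cyclic admissible covers arise in the limit as two simple zeros of $w$ collide, or as $C$ acquires a node, and argue the limit is unique over the generic such differential — the non-uniqueness that makes $X_{g,n}\to X^{\mathrm{inc}}_{g,n}$ fail to be an isomorphism (Example 4.3 of~\cite{Grushevsky-Moeller2016}) must be shown to occur only in codimension $\geq 2$. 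A secondary subtlety is the special case $g=n=2$, where $D_{\mathrm{deg}}$ has the extra component $2\cdot P\Mgno[2,2]$ of squares of abelian differentials; one checks separately that over its generic point the cover (a genus-$\gh$ curve with the hyperelliptic-type automorphism) degenerates mildly and $\mathrm{diff}$ is again an isomorphism there, or else one simply includes that generic locus in $U$ as well. Everything else is a routine application of smoothness of the relevant moduli stacks and Zariski's main theorem.
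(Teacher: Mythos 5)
Your outline reproduces the paper's architecture: the same choice of $U$ (the open stratum together with the generic loci of the divisors $D_0,\dots,D_{[g/2]}$ and $D_{\rm deg}$), the same dimension count for the codimension-$2$ statement, and the same mechanism for the coarse-space isomorphism (birational $+$ finite onto a smooth target, with finiteness channelled through the incidence variety $X^{\rm inc}_{g,n}$). The genuine gap is your step (2), i.e.\ the smoothness of $X(g,n)$ (hence of $V$) along the generic loci of the divisors, which is part of the assertion being proved. You propose to ``reduce smoothness of $X(g,n)$ at such points to the analogous (known) statement for stable pointed curves $\Mb_{g,N}$'' via the finite birational map $X_{g,n}\to X^{\rm inc}_{g,n}$. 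This is not a valid reduction: $X^{\rm inc}_{g,n}$ is a closure of an incidence locus inside a product, not $\Mb_{g,N}$ itself, and it is not a priori smooth; moreover finiteness of a birational map transfers neither smoothness nor normality in either direction (the paper explicitly warns that $X(g,n)$ is not normal in general, which is the whole reason $U$ and $V$ are needed). The paper closes this point by an explicit local computation at a generic point of $D_{\rm deg}$: after normalizing the coordinate so that $\nd=(\zeta-\zeta_1)(\zeta-\zeta_2)(d\zeta)^n$, the quantity $\zeta_1-\zeta_2$ modulo $\pm1$ (equivalently the period $\int_{x_1}^{x_2}v$, Lemma~\ref{coorddeg}) is a transverse parameter, so that locally $X(g,n)\simeq W\times\Delta'/(\Z/2\Z)$ and ${\rm diff}$ is $(u,t)\mapsto(u,t^2)$. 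This single computation delivers smoothness of $V$, the coarse-space isomorphism, and the degree-$1/2$ stacky behaviour along $D_{\rm deg}$ that is used later in the vanishing-order arguments.

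Two smaller points. First, your description of the fibre over a generic point of $D_{\rm deg}$ as ``a single reduced point'' hides exactly this $\Z/2$ stack structure; it does not contradict the proposition (which only concerns coarse spaces), but your Zariski-main-theorem step must be run on underlying schemes, and quasi-finiteness there is not free: it is established in the paper by exhibiting the unique preimage explicitly (a rational bubble attached at the double zero, carrying the identically vanishing differential, with cover $y^n=(\zeta-\zeta_1)(\zeta-\zeta_2)$), which is what makes $\epsilon:\epsilon^{-1}(U)\to U$ a bijection and hence finite (Lemma~\ref{P3}). Second, your ZMT phrasing ``quasi-finite and a bijection $\Rightarrow$ local isomorphism'' needs the normality of the target and separatedness (or outright finiteness, as in the paper) to be correct; as stated it fails, e.g.\ for a blow-up with all but one point of the exceptional divisor removed. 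With these two steps supplied, the rest of your plan matches the published proof.
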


The existence of $U$ and $V$ as above allows one to define the induced push-forward ${\rm diff}_*$ in the Picard groups. Indeed, the first property ensures that $\Pic(U)$ and $\Pic(V)$ are isomorphic. Similarly, the second propery ensures that $j'$ induces an isomorphism between $\Pic (U)$ and $\Pic (P \Mgno)$. Thus, given an element in $\Pic (X_{g,n})$, one defines its push-forward to $\Pic (P \Mgno)$ by first taking its pull-back by $j$, then the push-forward from $V$ to $U$, and finally the push-forward by $j'$. 

\begin{definition}
The {\em Prym-Tyurin class} $\pt^{(k)}$ on $P\Mgno$ is defined by the formula  
$\pt^{(k)}={\rm diff}_* c_1 (\Lambda^{(k)})$.
\end{definition}

\subsection{Statement of the results}

The main result obtained in this paper is the expression for the Prym-Tyurin classes and the class $\delta_{\rm deg}$ of Definition~\ref{Def:degenerate} in the 
$(\lambda,\psi,\delta_i)$ basis.

\begin{theorem}\label{mainth} 
In the rational Picard group of $P\Mgno$ we have
\be
\delta_{{\rm deg}} = 12n(n+1)\l- 2 (g-1)(2n+1)\psi- n(n+1)\sum_{i=0}^{[g/2]}\delta_i;
\label{Hodgeint}
\ee
\be\label{PTL}
\pt^{(n-k)}= (6k^2+6k+1)\lambda -\f{g-1}{n}k(2k+1)\psi- \f{1}{2}k(k+1) \sum_{i=0}^{[g/2]}\delta_i+ c_{k}\ \delta_{\rm deg},
\ee
where
\be
c_{k}=\left\{ 
\begin{array}{cl} 
\frac{2k-n}{2n},& \text{if  $(n-1)/2 < k < n$},\\[5pt]
0, & {\rm otherwise.}
\end{array}
\right. .
\label{correction}
\ee
\end{theorem}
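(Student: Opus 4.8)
The plan is to pursue the analytic approach via the Bergman tau function announced in the abstract. First I would fix the cyclic cover $f:\Ch\to C$ associated to a generic point $(C,\nd)\in P\Mgn[\mathbf{1}]$ together with its canonical differential $v$ satisfying $v^n=f^*\nd$. The Bergman tau function $\tau$ is a section of (a power of) a line bundle on the Hurwitz-type space, and its modulus-squared admits an explicit variational description in terms of the prime form and the branch points; the key input I would invoke is the known formula expressing $\mathrm{div}(\tau)$ as a linear combination of the Hodge class on $\Mb_{\gh}$, the boundary divisors of $\Mb_{\gh}$, and the ramification (Weierstrass-type) divisors. Pulling this back along $\mathrm{source}:\overline{\rm Hur}^n_g\to\Mb_{\gh}$ and then along $\mathrm{adm}:X(g,n)\to\overline{\rm Hur}^n_g$, and finally pushing forward to $P\Mgno$ via $\mathrm{diff}_*$ (legitimate by Proposition~\ref{pr:open}), gives one relation among $\lambda$, $\psi$, $\delta_i$, $\delta_{\rm deg}$, and the Prym-Tyurin classes $\pt^{(k)}$.

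Second, I would decompose the pulled-back Hodge bundle $\mathrm{source}^*\Omega_{\gh}=\bigoplus_{k=0}^{n-1}\Lambda^{(k)}$ into $\sigma^*$-eigenbundles as in~\eqref{decompo'}, so that $\mathrm{source}^*\lambda=\sum_{k=0}^{n-1}c_1(\Lambda^{(k)})$, and then compute $c_1(\Lambda^{(k)})$ individually. The ranks of the $\Lambda^{(k)}$ are fixed by the Chevalley--Weil formula for the $\Z/n\Z$-action on $H^0(\Ch,\omega_{\Ch})$; to get the Chern classes I would apply Grothendieck--Riemann--Roch (or the Mumford/Bini--type eigenbundle GRR) to $f_*$ of powers of the relative dualizing sheaf $\omega_{\Ch/C}$, using that $\Lambda^{(k)}$ corresponds to the $\rho^k$-isotypic piece, i.e.\ to $f_*(\omega_{\Ch}\otimes L^{-k})$ where $L$ is the $n$-torsion-twisted line bundle with $L^n\cong\omega_C^{\otimes -?}$ coming from $\nd$. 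This produces $c_1(\Lambda^{(k)})$ as an explicit quadratic-in-$k$ combination of $\lambda$, $\psi=c_1(L)$-type classes, and the boundary classes, together with a correction supported on the ramification/degenerate locus. Comparing with the tautological relation $\psi$ (the self-intersection of the tautological section) on $P\Mgno$ and with Lemma~\ref{pic} (that $\lambda,\psi,\delta_i$ is a basis) pins down all coefficients; the coefficient $6k^2+6k+1$ and the linear-in-$k$ terms $k(2k+1)$ and $k(k+1)$ are exactly what a second-order Taylor expansion of the Hodge-class contribution of the $k$th eigenbundle yields, and the anomalous term $c_k\,\delta_{\rm deg}$ arises from the branch divisor's contribution flipping sign once $2k>n$.

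Third, to extract the formula~\eqref{Hodgeint} for $\delta_{\rm deg}$ alone, I would either sum the $n-1$ equations~\eqref{PTL} over $k$ (together with $k=0$, where $\Lambda^{(0)}=\nu^*\Omega_g$ contributes the plain Hodge relation $\lambda$) and use $\sum_{k=0}^{n-1}c_1(\Lambda^{(k)})=\mathrm{source}^*\lambda$, which after pushforward expresses $\delta_{\rm deg}$ in the basis; or, more directly, compute $\mathrm{div}(\tau)$ in the $n=1$-adjacent normalization where only one eigenbundle is relevant, then bootstrap. The special case $(g,n)=(2,2)$ has to be handled separately because of the extra component $P\Mgno[2,2]$ in $D_{\rm deg}$ (as explained in the Remark), so I would verify~\eqref{Hodgeint} and~\eqref{PTL} there by a direct genus-two computation, using hyperellipticity.

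\textbf{Main obstacle.} The hard part will be controlling the boundary and ramification contributions: the tau function and the GRR computation naturally live on $\overline{\rm Hur}^n_g$ (or on the non-normal space $X(g,n)$), and one must carefully track how the boundary divisors of $\Mb_{\gh}$, the admissible-covering boundary, and the totally-ramified locus push forward under $\mathrm{diff}_*$ to combinations of $\delta_i$ and $\delta_{\rm deg}$ on $P\Mgno$. In particular, identifying precisely which degenerations of the cover contribute to $D_{\rm deg}$ (a single colliding pair of branch points) versus to the $\delta_i$ (nodal degenerations of $C$), and getting the multiplicities right --- including the factor of $2$ and the piecewise coefficient $c_k$ in~\eqref{correction} --- is where essentially all the subtlety lies; the rest is Riemann--Roch bookkeeping and linear algebra in the rank-$([g/2]+3)$ Picard group of Lemma~\ref{pic}.
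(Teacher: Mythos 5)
Your plan is in the same general spirit as the paper (a Bergman tau function plus the eigenbundle decomposition of the pulled-back Hodge bundle), but the two computations that actually carry the content of the theorem are missing rather than merely deferred. First, the ``known formula'' you invoke for ${\rm div}(\tau)$ on the Hurwitz-type space, expressing it through $\lambda_{\gh}$, boundary divisors of $\Mb_{\gh}$ and ramification divisors, does not exist off the shelf in the form you need. In the paper the tau function is constructed directly on the strata of $n$-differentials on the base curve, its transformation under change of Torelli marking and its homogeneity exponent $\kappa$ are established, and then its vanishing orders are computed in transversal parameters: order $\frac{1}{12n(n+1)}$ along $D_{\rm deg}$ and order $\frac{1}{12}$ along each $D_j$. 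These asymptotics are precisely what produce the coefficients $12n(n+1)$, $2(g-1)(2n+1)$ and $n(n+1)$ in \eqref{Hodgeint}; the paper's alternative proof instead computes $[\overline{Z}_2]=(-\psi+n\psi_1)(-\psi+(n+1)\psi_1)$ on the one-pointed space and pushes forward using $\pi_*\psi_1^2=12\lambda-\sum\delta_i$. Nothing in your proposal replaces either computation, so the numerical content of \eqref{Hodgeint} is not derived; moreover your plan to recover $\delta_{\rm deg}$ by summing the relations \eqref{PTL} over $k$ presupposes an independent expression of $\hat{\nu}^*\lambda_{\gh}=\sum_k \pt^{(k)}$ in the $(\lambda,\psi,\delta_i,\delta_{\rm deg})$ basis, which is exactly one of the unestablished inputs.

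Second, and more seriously, the piecewise coefficient $c_k=\frac{2k-n}{2n}$ cannot be obtained from the heuristic that ``the branch divisor's contribution flips sign once $2k>n$''. The paper does not run GRR on the cover (your sketch of that route is anyway incomplete: the twisting line bundle is left undetermined, and the extension of the eigenbundles across $D_{\rm deg}$ and the $D_i$ — where all the delicate multiplicities live — is exactly what would have to be controlled). Instead it uses the multiplication map $\Phi_j:\Lambda^{(j)}\otimes T^{\otimes(n-j)}\to\tilde{\nu}^*\Omega_g^{(n-j+1)}$, an isomorphism away from $D_{\rm deg}$; combined with Mumford's formula for $\lambda_{n-j+1}$ this is what yields the quadratic coefficients $6k^2+6k+1$, $k(2k+1)$, $\frac{1}{2}k(k+1)$. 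The correction term then comes from a local analysis at $D_{\rm deg}$: the canonical cover degenerates to a curve with a rational tail $y^n=(\zeta-\zeta_1)(\zeta-\zeta_2)$ whose $\rho^j$-eigenspace of holomorphic differentials is nonzero exactly for $1\le j\le[(n-1)/2]$, producing a one-dimensional kernel of $\Phi_j$; an explicit local section shows that $\det\Phi_j$ vanishes to order $1-\frac{2j}{n}$, and the stacky degree $1/2$ of ${\rm diff}$ along $D_{\rm deg}$ converts this into the coefficient $\frac{1}{2}-\frac{j}{n}$ of $\delta_{\rm deg}$ in $\pt^{(j)}$, i.e.\ the theorem's $c_k$ after substituting $j=n-k$ (in particular the vanishing threshold, not a sign flip, is what makes $c_k$ piecewise). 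Without this degeneration analysis on $X(g,n)$ — or an equivalent boundary computation — the formulas \eqref{PTL}–\eqref{correction} remain asserted rather than proved.
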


\subsection{Strategy of the proof} 

Formula~\eqref{Hodgeint} of Theorem~\ref{mainth} is proved in two distinct ways.

\begin{itemize}
\item In Section~\ref{Bergmansec} we introduce the Bergman tau function on the moduli space $\Mgn$. We study its transformation property and its asymptotic behavior at the boundary divisors $D_{\rm deg}$ and $D_i,\;0\leq i \leq [ g/2 ]$.  We explicitly compute the vanishing order of the Bergmann tau function along these divisors. We use these results to express the divisor $\delta_\deg$ in the $(\lambda, \delta_i, \psi)$ basis of the Picard group. This first proof is a further development of the ideas introduced in~\cite{MRL} and~\cite{Contemp}.
\item In Section~\ref{sec:alternative} we give an alternative proof of Formula~\eqref{Hodgeint} based on algebro-geometric computations as introduced in \cite{Sau} and \cite{Zvon} in the context of abelian differentials. We consider the moduli space of $n$-differentials on genus $g$ curves with one marked point. This space carries a vector bundle of 2-jets of an $n$-differential at the marked point. The Euler class of this vector bundle has a natural expression involving the locus 
$A_2$ of $n$-differentials with a double zero at the marked point. The locus $A_2$ pushes forward to $D_\deg$ under the forgetful map that forgets the marked point. This allows one to compute the cohomology class $\delta_\deg$ that is Poincar\'e dual to the divisor class of $D_\deg$.
\end{itemize}

To prove Formulas~\eqref{PTL} and \eqref{correction} we combine (\ref{Hodgeint}) with the following two facts. 
\begin{itemize} 
\item First, the well-known Mumford formula \cite{Mumford} expressing the first Chern class of the vector bundle of $k$-differentials on $\Mb_{g}$ via the Hodge class:
\be
\lambda_k=(6k^2-6k+1)\lambda-\f{k(k-1)}{2}\sum_{i=0}^{[g/2]}\delta_i \;.
\la{Mum}\ee
\item Second, the fact that the morphism
\begin{equation}
\begin{array}{rcl}
\Phi_k: \Lambda^{(k)} \otimes T^{\otimes n-k} &\to&  H^0(C,\omega_C^{n-k+1}) \\
(q, v^{n-k}) &\mapsto& qv^{n-k}\;
\end{array}
\label{quv}
\end{equation}
is actually an isomorphism of vector bundles outside $D_{\rm deg}$. 
\end{itemize}

The second fact allows one to compute the rank of the Prym-Tyurin vector bundle $\Lambda^{(k)}$:
\be
{\rm rk} \, \Lambda^{(k)}  = {\rm rk}\, \Omega^{(n-k+1)}=(2n-2k+1)(g-1)\;,\hskip0.7cm k=1,\dots,n-1\;.
\la{dimOk}
\ee
It also implies that 
\be\label{expected}
\pt^{(k)}= \lambda_{n-k+1} -\f{g-1}{n}(n-k)(2n-2k+1) \psi + {\rm const}\cdot\delta_{\rm deg}\,.
\ee
In Section~\ref{PTND}, we study the asymptotic of the determinant of $\Phi_k$ along the divisor $D_{{\rm deg}}$ to obtain Expressions~(\ref{PTL}) and~\eqref{correction}.

\begin{remark} Presence of an additional contribution proportional to $\delta_{\rm deg}$ in (\ref{PTL}) for $k > (n-1)/2$ was first suggested by the third author in~\cite{Zun} using an idea of~\cite{Zvon}. 
\end{remark}

\paragraph{\bf Plan of the paper.} In Section~\ref{cyclicsec} we prove Proposition~\ref{pr:open} and thus complete the definition of the Prym-Tyurin classes. In Sections~\ref{Bergmansec} and~\ref{sec:alternative} we prove Formula~\eqref{Hodgeint} of Theorem~\ref{mainth} using the two different approaches described above. In Section~\ref{PTND} we discuss the relationship between Prym-Tyurin vector bundles and vector bundles of $k$-differentials and derive a relationship between corresponding determinant line bundles. This allows us to express the Prym-Tyurin classes in the $(\lambda, \delta_i, \psi)$ basis of the Picard group and complete the proof of Theorem~\ref{mainth}.

\section{Space of admissible $n$-differentials}\label{cyclicsec}

In this Section we justify the definition of the Prym-Tyurin classes by proving the following extended version of Proposition~\ref{pr:open}. 

\begin{proposition}\label{pr:can}
There exist two smooth open-dense substacks $j:V\hookrightarrow X(g,n)$ and $j':U\hookrightarrow P\Mgno$ fitting into the commutative diagram
$$
\xymatrix{
V \ar[d]\ar[r]^{j \;\;\;\;} & X(g,n) \ar[d]^{\rm diff}\\ 
U\ar[r]_{j' \;\;\;\;}& P\Mgno
}
$$
such that
\begin{itemize}
\item $V$ and $U$ contain the image of $P\Mgn[\mathbf{1}]$ under the embeddings into $X(g,n)$ and $P\Mgno$;
\item the complement of $U$ is of codimension at least  $2$ in $P\Mgno$;
\item the map $V\to U$ is an isomorphism on the underlying coarse spaces;
\item there exists a line bundle $T\to V$ such that $T$ is a sub-vector bundle of $\Lambda^{(1)}$, $T^{\otimes n}\simeq L$ and the restriction of $T$ to $P\Mgn[\mathbf{1}]$ coincides with the sub-vector bundle of ${\hat{\nu}}^* \Omega_{\gh}$ spanned by $v$. 
\end{itemize}
\end{proposition}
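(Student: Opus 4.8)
The plan is to construct $U$ and $V$ explicitly by removing from $P\Mgno$ the locus where the correspondence between $n$-differentials and admissible covers degenerates badly, and then to verify the four bullet points one at a time.

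First I would describe the relevant loci. The key point is that $\mathrm{diff}:X(g,n)\to P\Mgno$ is birational, it is an isomorphism over the open stratum $P\Mgn[\mathbf{1}]$, and $X(g,n)$ fails to be normal only over a sublocus of the boundary of $P\Mgno$ together with the non-simple strata. I would let $U\subset P\Mgno$ be the complement of the closed locus where either (i) the fiber of $\mathrm{diff}$ is positive-dimensional, or (ii) $X(g,n)$ (equivalently, the total space $\Mgno$ or the Hurwitz space) fails to be smooth, or (iii) the universal cover $\Ch$ acquires too many nodes — concretely one keeps only $(C,w)$ lying in the strata $P\Mgn[\mathbf{1}]$, $P\Mgn[2,1,\dots,1]$ and the generic points of the boundary divisors $D_i$, i.e. one throws away everything of codimension $\geq 2$ in $P\Mgno$ where the two compactifications genuinely disagree (cf. Example 4.3 of~\cite{Grushevsky-Moeller2016}). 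Then one sets $V=\mathrm{diff}^{-1}(U)$ with its reduced substack structure. The codimension-$\geq 2$ claim for the complement of $U$ is then essentially a dimension count on the strata $P\Mgn[\bk]$ using the properties recalled in Section~\ref{Ssec:strata}, combined with the fact that the non-normal locus of $X(g,n)$ and the non-isomorphism locus of $\mathrm{diff}$ both sit over loci where $w$ has at least two non-simple zeros or the curve has at least two nodes.

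Next I would address smoothness of $U$ and $V$ and the statement that $V\to U$ is an isomorphism of coarse spaces. Smoothness of $U$ follows because it is contained in the smooth part of $\Mgno$: over $P\Mgn[\mathbf{1}]$ and $P\Mgn[2,1,\dots,1]$ the total space of $n$-differentials is smooth by the results quoted in Section~\ref{Ssec:strata}, and near a generic boundary point the relevant deformation theory is unobstructed. For $V$, since $\mathrm{diff}$ is finite (being birational and proper with normal target once we have restricted enough) and an isomorphism over the dense open $P\Mgn[\mathbf{1}]$, the normalization of $X(g,n)$ agrees with $X(g,n)$ over $U$ after shrinking; one checks that $\mathrm{diff}$ is unramified and bijective on points over $U$, so it is an isomorphism on coarse spaces. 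Here the comparison with the incidence variety $X^{\mathrm{inc}}_{g,n}$ of Remark~\ref{remincidence} is useful: the birational finite map $X_{g,n}\to X^{\mathrm{inc}}_{g,n}$ is an isomorphism away from the problematic codimension-$\geq 2$ locus, and over $U$ all three spaces coincide.

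Finally I would construct the line bundle $T$. Over $P\Mgn[\mathbf{1}]$ the canonical differential $v$ with $v^n=f^*w$ spans a line sub-bundle of $\hat\nu^*\Omega_{\gh}$ lying in the eigenbundle $\Lambda^{(1)}$, and by construction $T^{\otimes n}$ is the pullback of the tautological bundle $L$, since $v^n=f^*w$ and $w$ is the tautological section. The task is to extend this sub-line-bundle across $V\setminus P\Mgn[\mathbf{1}]$. The natural candidate is the saturation inside $\Lambda^{(1)}$ of the sub-sheaf generated by $v$; I would show that over $V$ this saturation is a line sub-bundle (not merely a subsheaf), that the relation $T^{\otimes n}\simeq L$ persists — this is a relation between line bundles that holds on the dense open set $P\Mgn[\mathbf{1}]$ and hence everywhere, since $V$ is smooth (so $\Pic(V)\hookrightarrow\Pic$ of the dense open is injective), and that $T\subset\Lambda^{(1)}$ as sub-bundles. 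Showing $T$ is a sub-bundle rather than just a subsheaf is where one must look at the admissible cover $\hat C$ over a generic boundary point of $D_i$ and check that the limit of $v$ is still a nonzero holomorphic differential spanning a line in the appropriate eigenspace; this is the place where the precise choice of $U$ (keeping only mild degenerations) is used.

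\emph{Main obstacle.} The hard part will be pinning down $U$ precisely enough that all four conditions hold simultaneously: one needs the complement to be codimension $\geq 2$, yet $U$ small enough that $X(g,n)$ is normal (hence $\mathrm{diff}$ well-behaved) and the differential $v$ extends to a nowhere-vanishing section of a line sub-bundle of $\Lambda^{(1)}$ over $V$. Balancing "large enough" against "nice enough" — and in particular controlling the non-normal locus of $X(g,n)$, which is exactly the subtlety flagged in Remark~\ref{remincidence} — is the crux of the argument.
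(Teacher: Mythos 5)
Your construction of $U$ and $V$ is essentially the paper's: keep $P\Mgn[\mathbf{1}]$, the generic part of $D_{\rm deg}$ and the generic parts of the $D_i$'s, set $V={\rm diff}^{-1}(U)$, get codimension $\geq 2$ by a dimension count on strata, and prove the coarse-space isomorphism by Zariski's main theorem (birational $+$ finite onto a smooth, hence normal, target), with finiteness checked through the incidence variety $X^{\rm inc}_{g,n}$. Up to the fact that you assert rather than verify quasi-finiteness over $U$ (the paper does this by explicitly describing the fiber over a point of $\widetilde{D}_{\rm deg}$: the two colliding marked zeros bubble off onto an attached component), this part of your plan is sound and follows the paper's route.

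There is, however, a genuine gap in your treatment of the fourth property. You define $T$ as the saturation in $\Lambda^{(1)}$ of the subsheaf generated by $v$ and then argue that $T^{\otimes n}\simeq L$ ``persists from the dense open set because $\Pic(V)\hookrightarrow\Pic(P\Mgn[\mathbf{1}])$ is injective.'' This injectivity is false: the complement of $P\Mgn[\mathbf{1}]$ in $V$ is a union of \emph{divisors} ($\widetilde{D}_{\rm deg}$ and the $\widetilde{D}_i$), so the restriction map on Picard groups has kernel generated by exactly these boundary classes (restriction is an isomorphism only when the complement has codimension $\geq 2$). Concretely, the saturation could satisfy $T^{\otimes n}\simeq L\otimes\mathcal{O}(k\widetilde{D}_{\rm deg}+\sum k_i\widetilde{D}_i)$ with nonzero $k,k_i$, and this happens precisely when the limit of $v$ vanishes identically in the fiber of $\Lambda^{(1)}$ along a boundary divisor. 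So the whole content of Property 4 is the boundary computation you defer: one must exhibit the limit of $v$ as a \emph{nonzero} vector of the fiber over the generic point of each boundary divisor, so that the subsheaf generated by $v$ is already a sub-line-bundle and $v^n=f^*\nd$ holds fiberwise, giving $T^{\otimes n}\simeq L$ with no boundary twist. Over $\widetilde{D}_i$ this is easy (the canonical cover of the nodal curve is still $\{v^n=\nd\}$ and $v$ is a nonzero section of the dualizing sheaf, with simple poles at the preimages of the node, lying in the $\rho$-eigenspace). The delicate case, which you do not address (you only mention generic points of $D_i$), is $\widetilde{D}_{\rm deg}$: there the limit admissible cover splits as $\Ch_1\cup\Ch_2$ with $\Ch_1$ the cover $y^n=(\zeta-\zeta_1)(\zeta-\zeta_2)$ of a bubble (meeting $\Ch_2$ in one or two nodes according to the parity of $n$), and the limit differential $v_0$ vanishes identically on $\Ch_1$ while equaling $v_2$ on $\Ch_2$; it is therefore still nonzero in the fiber and satisfies $v_0^n=f^*\nd_0$. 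This explicit local analysis near $D_{\rm deg}$ (which in the paper also yields the transverse coordinate $(\zeta_1-\zeta_2)$ on $X(g,n)$ versus $(\zeta_1-\zeta_2)^2$ on $P\Mgno$ and the degree-$1/2$ stack phenomenon) is the missing ingredient; without it, neither the coarse-space isomorphism along $D_{\rm deg}$ nor the untwisted relation $T^{\otimes n}\simeq L$ is established.
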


\subsection{Distinguished local coordinates on a cyclic covering}\label{sec:dist}

Consider a curve $C$ endowed with an $n$-differential $\nd$ with simple zeros. Let $f:\Ch\to C$ be the associated covering and $v =\nd^{1/n}$ be the canonical abelian differential on $\Ch$. Denote by $x_i \in C$, $i=1,\dots, N=2n(g-1)$, the branch points of $\Ch \to C$,  which coincide with the zeros of  $\nd$. Denote by $\xh_i$ the unique preimage of $x_i$ in~$\Ch$. Here we describe a specific parametrization of the covering curve $\Ch$ near the branching points of $f:\Ch\to C$. It is easy to see that all zeros of the holomorphic 1-form $v$ are situated at the ramification points $\xh_i$ and have multiplicity $n$. In other words,
\be
(v)=n \xh_1+\dots+n \xh_{N}.
\ee 

Introduce a local parameter $\zeta_i$ in a neighborhood of $x_i \in C$ and a local parameter $\xi_i$ in a neighborhood of $\xh_i \in \Ch$ such that 
\be
\nd=\zeta_i (d\zeta_i)^n, \qquad \xi_i(\xh)^{n+1}=\int_{\xh_i}^{\xh} v.
\la{distpar}
\ee
Both parameters are defined up to an $(n+1)$st root of unity and we make one choice in such a way that
$$
\zeta_i=\left(\f{n+1}{n}\right)^{n/(n+1)} \xi_i^n.
$$
The local parameters $\xi_i$ on $\Ch$ and $\zeta_i$ on $C$ given by~\eqref{distpar} are called {\it distinguished}.

Since $f^*v=\rho\, v$,  the local parameter $\xi_i(x)$ 
transforms   under the action of $f$ as $\xi_i(f(x))=\rho \xi_i(x)$. 

\subsection{Extension of the Prym-Tyurin bundles to codimension 1 loci}\label{sec:ext}

By construction, $P\Mgn[\mathbf{1}]$ is an open dense substack of $X(g,n)$. Thus ${\rm diff}:X(g,n)\to P\Mgno$ is birational and its restriction to $P\Mgn[\mathbf{1}]$ is an isomorphism onto its image.

By abuse of notation we denote by $D_i$, $0 \leq i \leq [g/2 ]$, the preimage in $P\Mgno$ of the boundary divisor $D_i \subset \Mb_g$. Then the complement of $P\Mgn[\mathbf{1}]$ in $P\Mgno$ is the union of the divisors $D_i$ and $D_\deg$. 

For each of these divisors we define a dense open locus $\widetilde{D}\subset D$ as follows.
\begin{itemize}
\item $\widetilde{D}_{0}$ is the locus of $(C,\nd)$ such that the curve $C$ has exactly one non-separating node and the differential $\nd$ has poles of order $n$ at the node and $N$ simple zeros;
\item $\widetilde{D}_{i}$ for $i\geq 1$ is the locus of $(C,\nd)$ such that the curve $C$ has exactly one separating node and the differential $\nd$ has poles of order $n$ at the node and $N$ simple zeros.
\item $\widetilde{D}_{\rm deg}$ if $(g,n)\neq(2,2)$ is the locus of $(C,\nd)$ such that the curve $C$ is a smooth curve and the differential $\nd$ has one zero of order exactly $2$ and its other zeros are simple;
\item $\widetilde{D}_{\rm deg}$ if $(g,n)=(2,2)$ is the disjoint union of the locus $\widetilde{D}_{\rm deg}(2,1,1)$ described above and of the locus $\widetilde{D}_{\rm deg}(2,2)$ of pairs $(C,\nd)$ where $C$ is smooth and $\nd$ is a square of a holomorphic differential with simple zeros.
\end{itemize}
We define $U$ as the union of $P\Mgn[\mathbf{1}]\cup \widetilde{D}_{\rm deg} \cup_{i} \widetilde{D}_i \subset P\Mgno$. We define $V$ as ${\rm diff}^{-1}(U) \subset X(g,n)$. We will prove that $U$ and $V$ satisfy the properties of Proposition~\ref{pr:can}.

Property 1 is satisfied by construction.

\begin{lemma}[Property 2]
The stack $U$ is an open substack of $P\Mgno$ and its complement is of codimension at least 2.
\end{lemma}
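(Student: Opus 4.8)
The plan is to show that $U = P\Mgn[\mathbf{1}] \cup \widetilde{D}_{\rm deg} \cup \bigcup_i \widetilde{D}_i$ is open by exhibiting its complement as a closed substack of codimension $\geq 2$, and then to argue openness follows. First I would recall the stratification of $P\Mgno$ by the boundary type of the curve $C$ and the zero/pole profile of the differential $\nd$. The complement of $P\Mgn[\mathbf{1}]$ in $P\Mgno$ is the union $D_{\rm deg} \cup \bigcup_i D_i$, so $P\Mgno \setminus U$ is the union of $(D_i \setminus \widetilde{D}_i)$ over all $i$, together with $(D_{\rm deg} \setminus \widetilde{D}_{\rm deg})$, minus any overlaps. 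Each of these is a closed substack, so $U$ is open; the real content is the codimension bound.

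Next I would estimate codimensions stratum by stratum. The divisor $D_{\rm deg}$ has dimension $\dim P\Mgno - 1 = 4g-4$ (in the non-$(2,2)$ case, using $\dim P\Mgn = 2g-2+m$ with $m = N = 2n(g-1)$ zeros... more simply, $\dim P\Mgno = \dim \Mb_g + \rk \Oc^{(n)}_g - 1 = (3g-3) + (2n-1)(g-1) - 1$). Its open dense part $\widetilde{D}_{\rm deg}$ consists of smooth curves with one double zero and the rest simple; its complement inside $D_{\rm deg}$ is obtained either by degenerating the curve (drops dimension by $1$ into the boundary) or by forcing a further coincidence/higher-order vanishing among the zeros (e.g. a triple zero, two double zeros, or a pole), each of which drops dimension by at least $1$ within $D_{\rm deg}$, hence codimension $\geq 2$ in $P\Mgno$. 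Similarly, inside $D_i$ the open locus $\widetilde{D}_i$ requires that $C$ have exactly one node (separating of type $i$, or nonseparating for $i=0$), that $\nd$ have a pole of order exactly $n$ at the node, and that all $N$ zeros be simple and distinct from the node. The complement in $D_i$ comes from: (a) a second node (codimension $1$ in $D_i$); (b) the pole order at the node dropping below $n$ on one branch (which, by the $n$-residue matching condition, forces extra vanishing conditions and drops dimension by $\geq 1$); (c) a non-simple zero or a zero colliding with the node (codimension $\geq 1$ in $D_i$). In every case the locus is codimension $\geq 1$ in the relevant divisor $D$, hence $\geq 2$ in $P\Mgno$.

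The main obstacle I anticipate is the bookkeeping for the loci where the $n$-differential degenerates at the node on a boundary curve: one must check carefully, using the definition of $n$-differentials on stable curves (the pole-order-$\leq n$ condition and the $n$-residue matching $\mathrm{res}_{p_1} = (-1)^n \mathrm{res}_{p_2}$), that lowering the pole order, or passing to the locus where the residue vanishes, really is a codimension-$\geq 1$ condition \emph{within} $D_i$ and not something that happens generically. The cleanest way is to use a local model: near a point of $D_i$, coordinates on $P\Mgno$ split into a smoothing parameter $t$ for the node, moduli of the two (pointed) components, and the $n$-differential data on each component, and one counts how many of these parameters are constrained by imposing a higher pole order or a zero coincidence. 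A secondary subtlety, which I would address with a remark rather than a full argument, is that we work with stacks: openness and the codimension estimate are stack-theoretic statements, but since the forgetful maps to $\Mb_g$ are representable and the extra automorphisms (hyperelliptic in low genus, the $\C^*$-scaling already quotiented out, and the $\Z/n\Z$ on the cover) act on fibers without affecting dimension counts, the coarse-space estimates transfer directly. Finally, in the exceptional case $(g,n)=(2,2)$ one repeats the analysis for the extra component $P\Mgno[2,2]$ of squares of holomorphic differentials, whose generic point (simple zeros of the square root) is exactly $\widetilde{D}_{\rm deg}(2,2)$, and the degeneration loci there are again codimension $\geq 1$ inside that component.
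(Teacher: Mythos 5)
Your proposal is correct and takes essentially the same route as the paper: the complement of $U$ is a union of closed loci of more degenerate objects (curves with extra nodes, differentials with worse zero/pole behaviour), so $U$ is open, and a stratum-by-stratum dimension count shows this complement has codimension at least $2$, with the case $(g,n)=(2,2)$ treated separately. The only blemish is the incidental figure $\dim D_{\rm deg}=4g-4$, which is not correct in general, but the general formula $\dim P\Mgno=(3g-3)+(2n-1)(g-1)-1$ that you state right after is the one actually used, so the slip does not affect the argument.
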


\begin{proof} The complement of $U$ is a union of closed substacks: the strata of curves with at least two nodes and the strata $P\Mgno[\mathbf{k}]$ for all $\mathbf{k}$ except $(1, \dots, 1)$ and $(2,1\ldots,1)$. Thus $U$ is an open substack in $P\Mgno$ and by dimension count its complement is of codimension at least~2.
\end{proof}

\begin{lemma}[Property 3]\label{P3}
The restriction of ${\rm diff}: V \to U$ induces an isomorphism on the underlying schemes. Moreover the map of stacks ${\rm diff}:V \to U$ is of degree one over $U\setminus D_{\rm deg}$ and of degree $1/2$ over $D_{\rm deg}$.
\end{lemma}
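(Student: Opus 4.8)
\textbf{Proof proposal for Lemma~\ref{P3}.}

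The plan is to analyze the map ${\rm diff}: V \to U$ stratum by stratum, using the explicit description of $U$ as $P\Mgn[\mathbf{1}] \cup \widetilde{D}_{\rm deg} \cup \bigcup_i \widetilde{D}_i$. Over the interior stratum $P\Mgn[\mathbf{1}]$ we already know from Section~\ref{sec:ext} that ${\rm diff}$ is an isomorphism onto its image, so the whole content is the analysis over the boundary divisors. First I would treat $\widetilde{D}_i$ for $0 \le i \le [g/2]$: here $(C,\nd)$ is a curve with a single node at which $\nd$ has a pole of order exactly $n$, and $N$ simple zeros elsewhere. The point to establish is that the admissible covering $f:\Ch\to C$ (together with the automorphism $\sigma$) is \emph{uniquely determined} by the pair $(C,\nd)$ in a neighborhood of such a point: on the normalization, away from the node, the covering is $v^n = \nd$ exactly as in the smooth case, and over the node the admissibility condition on the $n$-residues (together with the prescribed pole order $n$) forces the local structure of the cover and pins down how the branches are glued. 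Concretely one shows the fiber of ${\rm diff}$ over a point of $\widetilde{D}_i$ is a single reduced point by exhibiting the inverse: reconstruct $\Ch$ from $\nd$ via the $n$-th root construction on each component and the compatibility of $n$-residues at the node. This gives degree $1$ over $\bigcup_i \widetilde{D}_i$, and an analogous (simpler) argument gives degree $1$ over $\widetilde{D}_{\rm deg}(2,1,\dots,1)$, since there the curve is smooth and $\nd = v^n$ determines $(\Ch,\sigma)$ uniquely just as over the open stratum, the only new feature being a ramification point of order $n+1$ above the double zero, which is still canonically determined.

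The real issue is the locus $\widetilde{D}_{\rm deg}(2,2)$ in the case $g=n=2$, which is the source of the degree $1/2$. Here $C$ is a smooth genus $2$ curve and $\nd$ is the square of a holomorphic abelian differential $\alpha$ with two simple zeros; but $\nd = \alpha^2 = (-\alpha)^2$, so $\nd$ does \emph{not} determine $\alpha$ — there are exactly two square roots differing by sign. In $P\Mgno$ the point $(C,\nd)$ is a single point (with the stacky factor of $2$ already accounted for in the definition of $D_{\rm deg}$), while in $X(g,n)$, which lies inside $\Mgno \times \overline{\rm Hur}^n_g$, the fiber of ${\rm diff}$ records the admissible covering data; the two sign choices of $\alpha$ produce data that become identified only through the $\Z/n\Z$-action, and a careful bookkeeping of automorphisms shows the stacky fiber has degree $1/2$. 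This is exactly the situation where I expect to need $X^{\rm inc}_{g,n}$: the incidence variety compactification, where $\overline{\rm Hur}^n_g$ is replaced by $\Mb_{g,N}/S_N$, allows one to see the covering datum more transparently as a configuration of the $N=2n(g-1)=4$ branch points together with the level structure, and to compute the length of the scheme-theoretic fiber there, then transfer the answer back along the finite birational map $X_{g,n}\to X^{\rm inc}_{g,n}$ of Remark~\ref{remincidence}.

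In more detail, the steps I would carry out are: (1) recall the reconstruction $(C,\nd) \mapsto (\Ch = \{v^n = \nd\}, \sigma)$ and verify it extends over $\widetilde{D}_i$ and over $\widetilde{D}_{\rm deg}(2,1,\dots,1)$ by a local computation at the node / at the double zero, checking that the admissibility of the limiting covering matches what the $n$-th root construction produces and that no choices are involved — hence ${\rm diff}$ is an isomorphism of coarse spaces there and of stack-degree $1$; (2) for $\widetilde{D}_{\rm deg}(2,2)$ with $g=n=2$, work inside $X^{\rm inc}_{g,n}$, where a point consists of $(C, \nd, \text{branch divisor } B \in \Mb_{2,4}/S_4)$ with $B$ supported on the zeros of $\nd$; show that over $(C,\nd)$ with $\nd = \alpha^2$ the two zeros of $\alpha$ \emph{each} split into a pair of coinciding zeros of $\nd$, so the branch divisor is $2p+2q$ with $p,q$ the zeros of $\alpha$, and the extra data needed to build the admissible double cover is a square root of $\Oc_C(p+q)$ compatible with $\nd$; (3) count automorphisms: the hyperelliptic involution acts, $\sigma$ has order $2$, and the sign ambiguity $\alpha \leftrightarrow -\alpha$ is absorbed, yielding an automorphism group on the $X$-side that is twice as large as on the $P\Mgno$-side, whence degree $1/2$; (4) transfer from $X^{\rm inc}_{g,n}$ to $X_{g,n}$ via the finite birational map, noting it is an isomorphism on this locus (the Example~4.3 pathology of~\cite{Grushevsky-Moeller2016} occurs elsewhere). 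The main obstacle is genuinely step~(3): making the automorphism/gerbe bookkeeping at the $(2,2)$-locus precise enough to extract the factor $1/2$ rather than merely "finite", and confirming it is consistent with the factor of $2$ that was put into Definition~\ref{Def:degenerate} by hand.
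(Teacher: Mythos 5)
Your stratum-by-stratum plan is fine for the nodal divisors $\widetilde{D}_i$, and your use of the incidence variety echoes the paper's finiteness argument, but the core of the proposal mislocates the factor $1/2$, and this is a genuine gap. The lemma asserts that ${\rm diff}:V\to U$ has degree $1/2$ over \emph{all} of $D_{\rm deg}$, for every $(g,n)$ --- not only over the special component $\widetilde{D}_{\rm deg}(2,2)$ when $g=n=2$. Your claim of degree $1$ over $\widetilde{D}_{\rm deg}(2,1,\dots,1)$ therefore contradicts the statement being proved. In the paper the $1/2$ comes from the local structure of $X(g,n)$ transverse to $D_{\rm deg}$: if $x_1,x_2$ are the two coalescing simple zeros and $\zeta$ is normalized by $\nd=(\zeta-\zeta_1)(\zeta-\zeta_2)(d\zeta)^n$, then a transverse coordinate on $X(g,n)$ is $\zeta_1-\zeta_2$ only up to the sign coming from relabelling $x_1\leftrightarrow x_2$, so locally $X(g,n)\simeq W\times\Delta'/(\Z/2\Z)$ as a stack, while on $P\Mgno$ the transverse coordinate is $a$ with $\nd=(\zeta^2+a)(d\zeta)^n$; in these charts ${\rm diff}$ is $(u,t)\mapsto(u,t^2)$, an isomorphism of coarse spaces but of stack degree $1/2$ along $t=0$. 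This $\Z/2$ has nothing to do with the sign ambiguity $\alpha\leftrightarrow-\alpha$ at the $(2,2)$-locus, which is a separate phenomenon already accounted for by the coefficient $2$ in Definition~\ref{Def:degenerate}; your step (3) is therefore aimed at the wrong target.

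Relatedly, your description of the fiber of ${\rm diff}$ over $\widetilde{D}_{\rm deg}$ is not what lives in $X(g,n)$: since $X(g,n)$ is the closure inside $\Mgno\times\overline{\rm Hur}^n_g$ (and $X^{\rm inc}_{g,n}$ keeps the $N$ zeros marked and distinct), the limit as two branch points collide is an admissible cover over a nodal curve obtained by attaching a rational bubble at the double zero, with a covering component $\Ch_1$ given by $y^n=(\zeta-\zeta_1)(\zeta-\zeta_2)$ over the bubble and the canonical differential vanishing identically there --- not the cyclic $n$-th root cover of the smooth curve with a single ramification point of order $n+1$ over the double zero, as you assert. The paper's route to the scheme-theoretic isomorphism is: $U$ has smooth coarse space and ${\rm diff}$ is birational, so it suffices to prove finiteness; this is done by factoring through $X^{\rm inc}_{g,n}$, using finiteness of $\overline{\rm Hur}^n_g\to\Mb_{g,N}/S_N$, and checking that the map forgetting the marked zeros is a bijection over $U$ by exhibiting the unique (bubbled) preimage over $\widetilde{D}_{\rm deg}$. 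To repair your argument you would replace your degree-$1$ claim on the main component of $D_{\rm deg}$ by the local $t\mapsto t^2$ computation above, which yields the uniform degree $1/2$ along $D_{\rm deg}$.
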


\begin{proof}
The underlying scheme of $U$ is smooth and the map ${\rm diff} : V\to U$ is birational. Thus the map ${\rm diff}$ of schemes is an isomorphism if and only if it is finite. We consider the incidence variety $X^{\rm inc}_{g,n}$ compactification defined in Remark~\ref{remincidence}. We have two birational map: ${\rm diff}' X^{\rm inc}_{g,n}\to X_{g,n}$ and $\epsilon: X^{\rm inc}_{g,n}\to \Mgno$ such that ${\rm diff}={\rm diff}' \circ \epsilon$. The map ${\rm diff}'$ is obtained by forgetting the admissible covering (but not the markings) and $\epsilon$ is obtained by forgetting the markings. As we have already stated in Remark~\ref{remincidence}, the map ${\rm diff}'$ is finite because ${\overline{\rm Hur}}_{g,n}\to \Mb_{g,N}/S_N$ is finite. Therefore we need to check that the map $\epsilon$ restricted to $\epsilon^{-1}(U)$ is finite.

The restriction $\epsilon: \epsilon^{-1}(U)\to U$ is a bijection. Indeed, if $(C,\nd)$ be a $n$-differential in $U\setminus D_{\rm deg}$, then the preimage of $(C,\nd)$ under $\epsilon$ is the $n$-differential $\nd$ with the marked simple zeros. Now if $(C,\nd)$ is a $n$-differential in $\tilde{D}_{\rm deg}$ then the preimage of $(C,\nd)$ is the point $(C',\nd',x_i)$ where $C'$ is the curve with two components: one component isomorphic to $C$ and one rational component attached to $C$ at the double zero; the differential $\nd'$ is then given by $\nd$ on the main component and vanishes identically on the rational component; finally the marked points are the simple zeros on the main component and two marked points on the rational component. 

Therefore  $\epsilon: \epsilon^{-1}(U)\to U$ is finite and ${\rm diff} : V\to U$ is birational and finite thus an isomorphism of the underlying schemes. Moreover the restriction of the map $\epsilon$ to $U\setminus D_{\rm deg}$ is obviously an isomorphism of stacks. The degree of ${\rm diff}$ along $D_{\rm deg}$ will be  computed in the next paragraphs.
\end{proof}

\begin{lemma}[Property 4]\label{P4}
There exists a line bundle $T\to V$ such that $T^{\otimes n}\simeq L$, $T$ is a sub-vector bundle of $\Lambda^{(1)}$, and the restriction of $T$ to $P\Mgn[\mathbf{1}]$ coincides with the sub-vector bundle 
of ${\hat{\nu}}^* \Omega_{\gh}$ spanned by $v$.
\end{lemma}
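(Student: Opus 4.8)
The plan is to construct $T$ first over the open stratum $P\Mgn[\mathbf{1}]$, where it already has a clear meaning, and then extend it across the codimension-one loci $\widetilde{D}_i$ and $\widetilde{D}_{\rm deg}$ that make up $V$. Over $P\Mgn[\mathbf{1}]$, the canonical differential $v\in H^0(\Ch,\omega_{\Ch})$ lies in the eigenbundle $\Lambda^{(1)}$ since $\sigma^*v=\rho^{-1}v$ (or $\rho v$ depending on the sign convention fixed in Section~\ref{sec:dist}); this gives a line sub-bundle $T_0\subset\Lambda^{(1)}|_{P\Mgn[\mathbf{1}]}$. Moreover $v^n=f^*\nd$, and since multiplying $\nd$ by a scalar multiplies $v^n$ by the same scalar while the point of $P\Mgno$ is unchanged, the assignment $(C,\nd)\mapsto v^{\otimes n}$ identifies $T_0^{\otimes n}$ with the restriction of the tautological bundle $L$. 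So over the big stratum all four required properties are immediate; the work is entirely about the extension.

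First I would analyze the situation over each boundary-type divisor separately. Over $\widetilde{D}_i$ (both $i=0$ and $i\geq1$) the curve $C$ has a node and $\nd$ has a pole of order exactly $n$ at the node with the required $n$-residue condition; the associated admissible cover $\Ch\to C$ and the limiting differential $v$ still make sense on the stable model, and $v$ is still a section of the relative dualizing sheaf of the source curve, still a $\rho$-eigenvector for $\sigma^*$, and still satisfies $v^{\otimes n}=f^*\nd$. This is exactly the content that the source map and the decomposition \eqref{decompo'} extend over $\overline{\rm Hur}^n_g$; one just has to check that the limiting $v$ is nonzero, which follows because the pole of $\nd$ of order $n$ produces, on the cover, a simple pole of $v$ at each preimage of the node, whose residues cancel appropriately so that $v$ descends to a nonzero section of $\omega_{\Ch}$ on the nodal source curve. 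Over $\widetilde{D}_{\rm deg}$ in the case $(g,n)\neq(2,2)$, the differential $\nd$ has one double zero; in the space $X(g,n)$ the corresponding source curve $\Ch$ acquires nodes over the double zero, but again the canonical differential survives as a nonvanishing eigensection, so $T$ extends there too. The case $(g,n)=(2,2)$ on the component $\widetilde{D}_{\rm deg}(2,2)$ requires care: here $\nd=u^2$ for a holomorphic differential $u$ with simple zeros, the cover is disconnected (two copies of $C$), and the ``canonical differential'' is $\pm u$ on the two sheets; this is where the degree-$1/2$ behavior of ${\rm diff}$ over $D_{\rm deg}$ enters, and one must check that the two choices of sign are exchanged by the stack automorphism so that $T$ is still a well-defined line bundle on $V$.

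Having defined $T$ as a line sub-bundle of $\Lambda^{(1)}$ on each piece of $V$ and checked the identity $T^{\otimes n}\simeq L$ on each piece, I would glue: since $V$ is smooth (Lemma, Property~2 and the smoothness in Proposition~\ref{pr:can}) and $P\Mgn[\mathbf{1}]$ is open dense with complement of codimension $1$ consisting precisely of these divisors, a line sub-bundle defined on the complement of a codimension-$2$ locus and agreeing with $T_0$ on the big stratum extends uniquely; the isomorphism $T^{\otimes n}\simeq L$, being an isomorphism of line bundles agreeing on a dense open set, extends as well by Hartogs-type reasoning on the smooth stack $V$.

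The main obstacle I expect is the behavior at the boundary divisors where the source curve of the admissible cover degenerates — precisely, verifying that the limiting canonical differential $v$ remains a nowhere-vanishing holomorphic (in the sense of $\omega$) eigensection, rather than developing a zero or failing to be in the $\rho$-eigenspace, along $\widetilde{D}_{\rm deg}$ and along $\widetilde{D}_0$ where the cover over a nonseparating node can itself be reducible or nodal in several ways. The distinguished local coordinates of Section~\ref{sec:dist} should be the right tool near the ramification/degeneration points, together with an explicit description of the admissible cover over a node carrying an order-$n$ pole; the bookkeeping of which connected cover arises (in particular the $(g,n)=(2,2)$ disconnected case) is the delicate point, and it is also what forces the degree $1/2$ of ${\rm diff}$ over $D_{\rm deg}$ recorded in Lemma~\ref{P3}.
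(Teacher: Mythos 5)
Your outline reproduces the paper's strategy: take $T_0=\langle v\rangle\subset\Lambda^{(1)}$ on $P\Mgn[\mathbf{1}]$ with $T_0^{\otimes n}\simeq L$ via $v^n=f^*\nd$, and extend the span of the canonical differential across the codimension-one loci of $V$. Your treatment of $\widetilde{D}_0,\widetilde{D}_i$ (simple poles of $v$ at the $n$ pairs of preimages of the node, residues differing by a sign, so $v$ is a nonzero eigensection of the dualizing sheaf of the nodal cover) is exactly what the paper does. The genuine gap is at $\widetilde{D}_{\rm deg}$, which you rightly call the main obstacle but then only assert — and with the wrong expectation. The limiting canonical differential is \emph{not} ``nonvanishing'': over a point of $\widetilde{D}_{\rm deg}$ the point of $X(g,n)$ is an admissible cover whose base bubbles off a rational component carrying $(\zeta-\zeta_1)(\zeta-\zeta_2)(d\zeta)^n$, the source acquires the component $\Ch_1:\ y^n=(\zeta-\zeta_1)(\zeta-\zeta_2)$ of genus $[(n-1)/2]$, glued to $\Ch_2$ at one node for odd $n$ and at two nodes for even $n$, and the limit $v_0$ of the canonical differentials equals $v_2$ on $\Ch_2$ while it vanishes \emph{identically} on $\Ch_1$. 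One must verify this convergence (the paper does it with the explicit local model of Section~\ref{sec:ext} and the transverse parameter of Lemma~\ref{coorddeg}) and observe that $v_0$ is still nonzero, lies in the $\rho$-eigenspace, and satisfies $v_0^n=f^*\nd_0$, so that its span extends $T$ as a sub-vector bundle with $T^{\otimes n}\simeq L$. Your fallback ``Hartogs/codimension-two'' gluing does not substitute for this: the abstract extension of a line sub-bundle given off a codimension-two set produces a rank-one subsheaf, but it need not be a sub-\emph{vector bundle} of $\Lambda^{(1)}$ at every point of $\widetilde{D}_{\rm deg}\subset V$, nor does it identify the fiber there, which is what the lemma (and the later computation of $\det\Phi_k$ along $D_{\rm deg}$) requires.

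Two side corrections. The degree $1/2$ of ${\rm diff}$ along $D_{\rm deg}$ in Lemma~\ref{P3} is not tied to the $(g,n)=(2,2)$ component or to any disconnected cover: it occurs along all of $D_{\rm deg}$ for every $(g,n)$ and comes from the $\Z/2\Z$ interchanging the two colliding zeros, i.e.\ the local chart $W\times\Delta'/(\Z/2\Z)$ with parameter $\zeta_1-\zeta_2$ mapping to $(\zeta_1-\zeta_2)^2$. Moreover, on $\widetilde{D}_{\rm deg}(2,2)$ the relevant object of $X(2,2)$ is again a limit of admissible covers (with rational bubbles at the two double zeros), not the naive cover $\{v=\pm u\}$ of the square $\nd=u^2$; the sign ambiguity you invoke is handled by the same generic analysis and plays no special role in defining $T$.
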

The proof requires a detailed analysis of the inverse morphism ${\rm diff}^{-1}: U \to V$ and is contained in the next two subsections. 

\subsubsection{Nodal curves} 

Let $0\leq i \leq [ g/2]$ and let $(C,\nd)$ be a point in $\widetilde{D}_i$. The $n$-fold covering associated to $(C,\nd)$ is given by $\Ch=\{(x,v)\in T^*_C / v^n=\nd\}$ and the canonical differential $v$ is still defined by $v(x,v)=v$. We can describe the topology of $\Ch$ and the singularities (zeros and poles) of $v$:
\begin{itemize}
\item If $i=0$, then the curve $\Ch$ is an irreducible curve with $n$ self-intersections. The differential $v$ has zeros of order $n$ at the marked points and poles of order $1$ at the nodes.
\item If $i\geq 1$ then the $\Ch$ has two irreducible components intersecting at $n$ distinct nodes. The differential $v$ has also zeros of order $n$ at the marked points and poles of $1$ at the nodes.
\end{itemize}
The canonical differential $v$ is well-defined on $U\setminus D_{\rm deg}$, thus the line bundle $T$ can be extended to ${\rm diff}^{-1}(U\setminus D_{\rm deg})$.

\subsubsection{Degenerate differentials}

Here we describe the local structure of the stacks $X(g,n)$ and $\Mgno$ close to $D_{\rm deg}$. This allows us to explicit the isomorphism of Lemma~\ref{P3} and to describe the fiber of the canonical line bundle along $\widetilde{D}_{\rm deg}$.  Let $(C_0,\nd_0)$ be a point in $\widetilde{D}_{\rm deg}$ and let $W$ be a neighborhood of $(C_0,\nd_0)$ in $D_{\rm deg}$. We will give a local parametrization of $X(g,n)$ and $\Mgno$ around the point $(C_0,\nd_0)$.
\begin{itemize}

\item {\em Parameters of $\Mgno$.} A neighborhood of $(C_0,\nd_0)$ is given by $W\times \Delta$ where $\Delta$ is a disk of $\C$ centered at zero. A point $(u, a)$ in $W\times \Delta$ parametrizes an $n$-differentials $(C,\nd)$ such that
$$
\nd=(\zeta^2+a)d\zeta^n.
$$
where the parameter $\zeta$ of the curve $C$ is uniquely determined by the choice of $a$. The parameter $a$ is a tranverse local parameter of $D_{\rm deg}$ in $\Mgno$.

\item {\em Parameters of $X(g,n)$.} A neighborhood of $(C_0,\nd_0)$ in $X_{g,n}$ is parametrized by $W\times \Delta'/(\Z/2\Z)$ where $\Delta'$ is a disk of $\C$ centered at zero.  Indeed, suppose first that the two colliding zeros of a differential $(C,\nd)$ are labeled $x_1$ and $x_2$.  Let $\zeta$ be a local parameter of $C$ such  that the positions of $x_1$ and $x_2$ are given by $\zeta_1$ and $\zeta_2$, respectively. To fix $\zeta$ uniquely we can define it by exact relation:
$$
 \nd(x)=(\zeta(x)-\zeta_1)(\zeta(x)-\zeta_2) (d\zeta(x))^n\;,\hskip0.7cm x\in C.
$$
The parameter $(\zeta_1-\zeta_2)/\{\pm 1\}$ is a local transverse parameter to $D_{\rm deg}$ in $X(g,n)$ (See Lemma~\ref{coorddeg} for a proof).
\end{itemize}

With these two local parametrizations, the map ${\rm diff}:X(g,n)\to \Mgno$ is given by
\begin{eqnarray*}
W\times \Delta'/(\Z/2\Z) &\to& W\times \Delta\\
(u,\zeta_1-\zeta_2)&\mapsto& (u,(\zeta_1-\zeta_2)^2).
\end{eqnarray*} 
This map is indeed an isomorphism of the underlying schemes.  However it is of degree $1/2$ along $D_{deg}\subset X(g,n)$ once we consider the stack structures of $X(g,n)$ and $\Mgno$. This finishes the proof of Lemma~\ref{P3}.
\bigskip
 
Finally we describe the extension of the canonical line bundle $T$ to $D_{\rm deg}$. Let $(C,\nd)$ be a family of differentials with simple zeros which tends to $(C_0,\nd_0)\in D_{\rm deg}\subset X(g,n)$. Once again we label the two coalescing zeros $x_1$ and $x_2$ and we use the local parameter of the curve $\zeta$ with $\zeta(x_i)=\zeta_i$ and $ \nd(x)=(\zeta(x)-\zeta_1)(\zeta(x)-\zeta_2) (d\zeta(x))^n $

 As $(C,\nd)$ tends to a pair $(C_0,\nd_0)$   the zeros $x_1$ and $x_2$ tend to the double zero $x_0$ of $\nd_0$. The limit curve $C_0$ is a nodal curve with two components: a Riemann sphere $C_1$  which gets naturally equipped with the {\em meromorphic} $n$-differential  $\nd_1(\zeta)$ given  by the formula
\be
\nd_1(\zeta)=(\zeta-\zeta_1)(\zeta-\zeta_2) (d\zeta)^n\;,\hskip0.7cm \zeta\in C_1\,,
 \la{ndloc1}
\ee
which is holomorphic outside of $\zeta=\infty$ and has two simple zeros at $\zeta_1$ and $\zeta_2$. The Riemann surface  $C_2$ is equipped with the holomorphic $n$-differential $\nd_0$.  The nodal point on $C_0$ is formed by identifying the point $\zeta=\infty$ on $C_1$ with the point $x_0$ on $C_2$.
 
The limit $n$-differential $(C_0,\nd_0)$ determines a canonical $n$-sheeted covering $\Ch_0\to C_0$. The curve $\Ch_0$ consists of two components $\Ch_1$ and $\Ch_2$.  The canonical covering $\Ch_1$ of $C_1$ is given by
equation
\be
v_1^n=\nd_1(\zeta)
\la{Ch1}
\ee
which, if we write $v_1=y d\zeta$, is  the curve
\be
y^n=(\zeta-\zeta_1)(\zeta-\zeta_2)
\la{Ch10}\ee
 of genus $\gh_1=[ (n-1)/2 ]$. The canonical covering $\Ch_2$ of $C_2$ is defined by the equation
\be
v_2^n=\nd_2\;;
\la{Ch2}
\ee
 its genus equals $\gh_2=\gh-[ n/2 ]$. 
 Therefore, for odd $n$ we have $\gh=\gh_1+\gh_2$ while for even $n$ we have $\gh=\gh_1+\gh_2+1$.
 
 The difference between the case of even $n$ and the case of odd $n$ is due to the fact that for odd $n$ the coverings $\Ch_1$ and $\Ch_2$ intersect at only one nodal point while for even $n$ the nodal point on $C_0$ has two pre-images on $\Ch_0$ i.e. for even $n$   $\Ch_1$ and $\Ch_2$ intersect at two nodal points,  $x_0^{(1)}$ and  $x_0^{(2)}$ (see Figure~\ref{M11} below).
 \begin{figure}[h!]
\begin{center}
\includegraphics[scale=0.45]{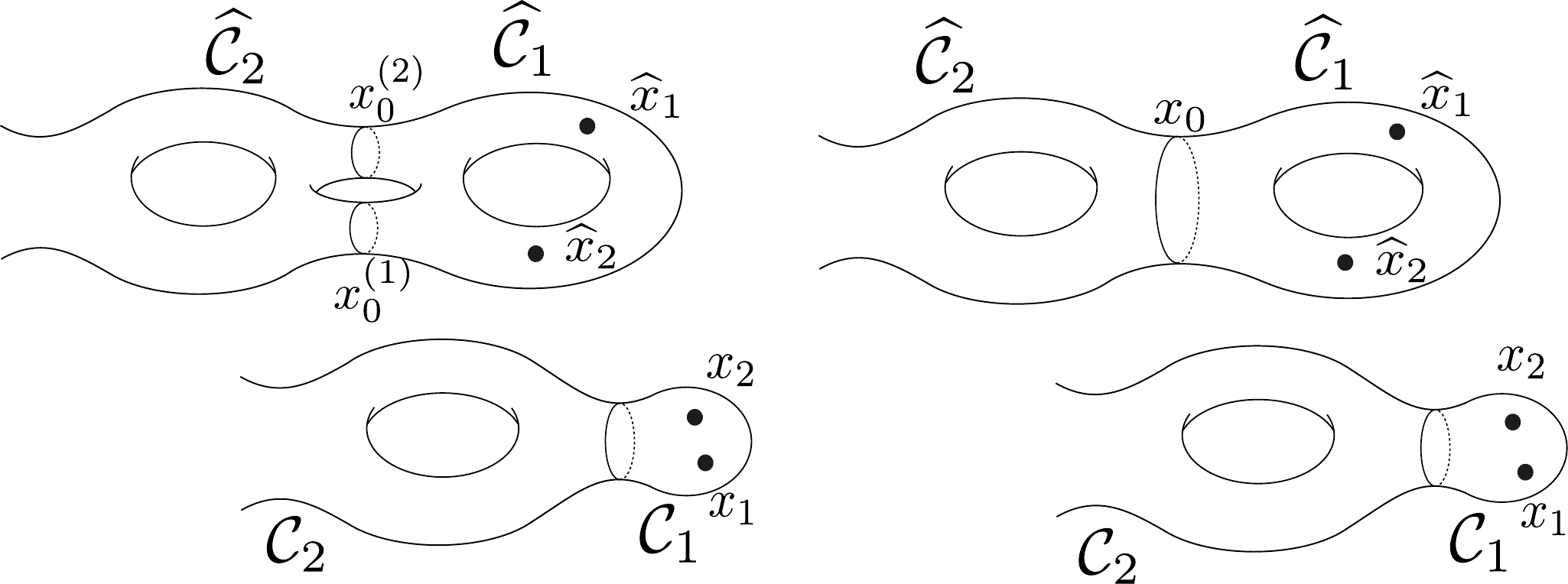}
\end{center}
\caption{Examples of degeneracy of canonical $n$-coverings (before colliding the zeros): on the left $n=4$ and on the right $n=3$. }\label{M11}
\end{figure} 
 
With this description of the limit covering, we define the limit canonical differential $v_0$ as follows: it is given by $v_0=v_2$ on the component $\Ch_2$ and vanishes identicaly on the rational component $\Ch_1$. It satisfies $v_0^n=f^*\nd_0$. Thus the canonical line bundle $T$ can be extended to the open set $V$. This completes the proof of Lemma~\ref{P4}. 

\section{Bergman tau function and Hodge class on $P\Mgno$}\la{Bergmansec}

Tau functions play an important role in the theory of integrable systems providing canonical generators for commuting
flows on the phase space \cite{BBT}. In some cases tau functions carry interesting algebro-geometric information,
like the isomonodromic tau function of the Riemann-Hilbert problem that is relevant in the theory
of Frobenius manifolds \cite{D}. 

The Bergman tau function introduced in \cite{IMRN} allowed to express the Hodge
class on the space of admissible covers of the projective line as an explicit linear combination of the boundary 
divisors \cite{Adv}. Then in \cite{KG1} this result was proven by pure algebro-geometric methods (namely, by means of the
Grothendieck-Riemann-Roch theorem) and later in \cite{KG} used to answer a question of Harris-Mumford \cite{HM} about 
the classes of Hurwitz divisors in the moduli space $\Mb_g$ of stable complex algebraic curves of even genus $g$. 

A version of the Bergman tau function for the moduli space of holomorphic abelian differentials on algebraic
curves \cite{JDG} allowed to get new relations in the rational Picard group of this space
and was applied to the Kontsevich-Zorich theory of Teichm\"uller flow \cite{MRL}, see also \cite{EKZ}. In \cite{Contemp}, the Bergman tau function was used to express the Prym class on the moduli space of holomorphic quadratic differentials in terms of the standard generators. Here we continue with developing these ideas further for the moduli space of holomorphic $n$-differentials.

\subsection{Bergman tau function on strata of $n$-differentials}

We begin with defining the Bergman tau function for each stratum $\Mgn [\bk]$ where $\bk=(k_1,\ldots,k_m)$ is a partition of $N=2n(g-1)$.
We introduce the following notation (see \cite{Fay} for precise definitions):
\begin{itemize}
\item $v_1,\dots,v_g$ -- the normalized basis of holomorphic abelian differentials with respect to a given Torelli marking (or cut system) on $C$;
\item $\O$ -- the corresponding period matrix;
\item $\Theta(z,\Omega)$ -- the theta function associated with $\O$;
\item $W(x)$ -- the Wronskian determinant of differentials $v_1,\dots,v_g$;
\item $\tilde{C}$ -- the fundamental polygon corresponding to the chosen cut system on $C$;
\item $E(x,y)$ -- the prime form on $C\times C$;
\item $\Acal_{x}$ -- the Abel map corresponding to the initial point $x$;
\item $K^{x}$ -- the vector of Riemann constants.
\end{itemize}

The distinguished local parameters on $C$ in a neighborhood of the points $x_i$ 
(zeroes of the $n$-differential $w$) are given by 
\be
\zeta_i(x)= \left(\int_{x_i}^x v\right)^{n/(k_i+n)}\,
\la{distlp}
\ee
where $k_i$ is the order of $x_i$
(in terms of these parameters $\nd\sim \zeta_i^{k_i}(d\zeta_i)^n$  near $x_i\in C$, and $v=w^{1/n}\sim  \zeta_i^{k_i/n}d\zeta_i$
near $\hat{x}_i=f^{-1}(x_i)\in\Ch$).
Then for the prime form $E(x,y)$ on $C\times C$ we have
$$
E(x,y)=\frac{E(\zeta(x),\zeta(y))}{\sqrt{d\zeta}(x)\sqrt{d\zeta}(y)}\,,
$$ 
and we put
\begin{eqnarray*}
E(\zeta,x_k)&=&\lim_{y\rightarrow x_k}E(\zeta(x),\zeta(y))\sqrt{\frac{d\zeta_k}{d\zeta}}(y),\\
E(x_k,x_l)&=&\lim_{\stackrel{\scriptstyle x\rightarrow x_k}{y\rightarrow x_l}}E(\zeta(x),\zeta(y))
\sqrt{\frac{d\zeta_k}{d\zeta}}(x)\sqrt{\frac{d\zeta_l}{d\zeta}}(y)\,.
\end{eqnarray*}
We define two vectors $Z,\,Z'\in\f{1}{n}\Z^{g}$ by the condition
\be
\f{1}{n}{\mathcal A}_{x}((\nd))+2K^x=\O Z+Z'\;.
\ee

\begin{definition}
The Bergman tau function on the space $\Mgn [\bk]$ is given by
\begin{align}\la{taudef}
&\tau(C,\nd)=\\\nonumber 
&c(x)^{2/3} e^{-\f{\pi}{6} \langle\O {Z},{Z}\rangle-\f{2\pi\sqrt{-1}}{3}\langle{Z},K^x\rangle}
\left(\f{\nd(x)}{\prod_{i=1}^{m}E^{k_i}(x,x_i)}\right)^{(g-1)/3n} 
\prod_{i<j} E(x_i,x_j)^{\f{k_i k_j}{6n^2}}\,, 
\end{align}
where
$$
c(x)=\frac{1}{W(x)}\left(\sum_{i=1}^g v_i(x)\frac{\partial}{\partial z_i}\right)^g
\theta(z;\O)\Big|_{z=K^{x}}
$$
\end{definition}

\begin{proposition}\label{symp}
Under the change of Torelli marking on $C$
$$
\left(\begin{array}{c} \tilde{{\boldsymbol b}} \\ \tilde{{\boldsymbol a}} \end{array}\right)=\left(\begin{array}{cc} A & B\\ C & D \end{array}\right) 
\left(\begin{array}{c} {\boldsymbol b}\\ {\boldsymbol a} \end{array}\right)\,,\quad \left(\begin{array}{cc} A & B\\ C & D \end{array}\right)\in Sp(2g,\Z)\,,
$$
the tau function (\ref{taudef}) transforms  as follows:
\be
\frac{\tau(C,\nd, \{\tilde{a}_i,\tilde{b}_i\})}{\tau(C,\nd, \{{a}_i,{b}_i\})}= \epsilon\, {\rm det} (C\O+D)
\ee
where $\e$ is a root of unity of degree $48d$ with $d={\rm l.c.m.}(k_1+n,\dots,k_m+n)$.
\end{proposition}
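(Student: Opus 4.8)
The plan is to track the modular transformation behaviour of each factor in the product defining $\tau(C,\nd)$ separately and then assemble the pieces. The building blocks are classical: under the symplectic change of Torelli marking, the normalized differentials transform by $\tilde v = (C\O+D)^{-t} v$, so the period matrix transforms by $\tilde\O = (A\O+B)(C\O+D)^{-1}$; the Wronskian $W(x)$ picks up a factor $\det(C\O+D)$; the prime form $E(x,y)$ is invariant up to a nowhere-vanishing exponential factor quadratic in the characteristics and the Abel map; the theta function satisfies the usual transformation law with a factor $\zeta_\kappa\,\det(C\O+D)^{1/2}e^{\pi i(\dots)}$ where $\zeta_\kappa$ is an eighth root of unity; and the vector of Riemann constants transforms affinely. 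First I would recall each of these transformation rules with precise references to \cite{Fay}, being careful about the branch of the square root and the $8$th-root-of-unity ambiguity in the theta-transformation formula.

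Next I would combine the exponential prefactor $e^{-\frac{\pi}{6}\langle\O Z,Z\rangle - \frac{2\pi i}{3}\langle Z,K^x\rangle}$, the factor $c(x)^{2/3}$, and the prime-form product. The key observation (already exploited in \cite{MRL} and \cite{Contemp}) is that the ``anomalous'' non-holomorphic/quadratic contributions coming from $E(x,x_i)$, from $c(x)$, and from the explicit $e^{-\frac{\pi}{6}\langle\O Z,Z\rangle}$ term are engineered to cancel precisely because $Z,Z'$ are chosen so that $\frac1n\Acal_x((\nd)) + 2K^x = \O Z + Z'$; this identity is what makes the combination transform by a pure power of $\det(C\O+D)$ times a root of unity. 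The root of unity arises from three sources: the $\zeta_\kappa^{?}$ from the $c(x)^{2/3}$ factor (which involves $\theta$ to a power, hence an $8$th root raised to a fractional power — giving the $48$), and the factors $E(x_i,x_j)^{k_ik_j/6n^2}$ and $\big(\nd(x)/\prod E^{k_i}(x,x_i)\big)^{(g-1)/3n}$, whose fractional exponents force us to choose branches consistently and produce roots of unity whose order divides $48d$ with $d=\mathrm{l.c.m.}(k_1+n,\dots,k_m+n)$ — the $k_i+n$ entering because the distinguished local parameter $\zeta_i=(\int_{x_i}^\cdot v)^{n/(k_i+n)}$ has an $(k_i+n)$-th root ambiguity that rescales $E(x_i,x_j)$ and $\nd(x)$ by corresponding roots of unity.

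Concretely I would proceed in the following order. (i) Show that the ratio of tau functions is holomorphic and nowhere-vanishing on the stratum, so that a priori it is a function of $\O$ alone times possibly a locally constant phase. (ii) Compute the transformation of the ``theta block'' $c(x)^{2/3}e^{-\frac{\pi}{6}\langle\O Z,Z\rangle-\frac{2\pi i}{3}\langle Z,K^x\rangle}$ and of the ``prime-form block'' $\big(\nd(x)/\prod E^{k_i}(x,x_i)\big)^{(g-1)/3n}\prod_{i<j}E(x_i,x_j)^{k_ik_j/6n^2}$ separately, keeping all exponential factors. (iii) Verify that all factors involving $e^{(\text{quadratic in }\O)}$, the Abel map, and $K^x$ cancel between the two blocks using the defining relation for $Z,Z'$, leaving exactly $\det(C\O+D)^{2/3}\cdot\det(C\O+D)^{1/3}=\det(C\O+D)$ together with a residual root of unity. (iv) Bound the order of that root of unity by tracking the denominators: the $2/3$ power of the $8$th-root from $\theta$ contributes order dividing $48/\gcd(48,\dots)$, hence $48$; the fractional exponents $k_ik_j/6n^2$ and $(g-1)/3n$ combined with the $(k_i+n)$-th root ambiguities of the distinguished parameters contribute a factor whose order divides $d$. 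The main obstacle I expect is exactly step (iv): bookkeeping the precise order of the root of unity, because one must simultaneously control the branch choices of several fractional powers (of $\theta$, of $\nd(x)$, of the prime forms, and of the distinguished local parameters) and show that the combined ambiguity has order exactly dividing $48d$ rather than something larger — this requires a careful, essentially combinatorial, analysis of how the $(k_i+n)$-th roots enter and recombine, and is the only place where the number-theoretic structure of the exponents matters.
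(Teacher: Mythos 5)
Your plan is essentially the paper's own argument: the paper proves this proposition exactly by invoking the standard transformation laws (from Fay) of every factor in the defining formula for $\tau$, and attributes the root of unity of degree $48d$ to the $(k_i+n)$-th--root ambiguity of the distinguished local parameters, which propagates into $E(x,x_i)$ and $E(x_i,x_j)$ — precisely the mechanism you identify in step (iv). The only element of the paper you omit is its remark that the $\det(C\Omega+D)$ factor can alternatively be seen from the variational formulas for $\tau$, but that is offered there only as a cross-check, not as the main proof.
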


The proof can be obtained by using standard transformation properties of all factors in (\ref{taudef}) 
under the change of Torelli cut system on $C$ (cf. \cite{Fay}). The root of unity appears due to an ambiguity in the definition of the distinguished local parameters (\ref{distlp}), which translates into an ambiguity in the definition of $E(x,x_i)$ and $E(x_i,x_j)$. 
The appearance of the term ${\rm det} (C\O+D)$ can also be seen from variational formulas 
for $\tau(C,w)$ discussed below, similarly to \cite{JDG,MRL,Contemp}.

\begin{proposition}\label{homo}
The tau function has the following quasi-homogeneity property:
\be
\tau(C,\delta\nd)= \delta^\kappa\tau(C,\nd)
\la{homog}
\ee
with
\be
\ka= \frac{1}{12 n^2}\sum_{i=1}^{m}\f{k_i(k_i+2n)}{k_i+n}.
\la{homco}
\ee
\end{proposition}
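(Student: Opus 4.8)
The plan is to verify the quasi-homogeneity property \eqref{homog} directly by tracking how each factor in the definition \eqref{taudef} of $\tau(C,\nd)$ scales when $\nd$ is replaced by $\delta\nd$. The key observation is that multiplying $\nd$ by a nonzero constant $\delta$ leaves the curve $C$, its Torelli marking, the zeros $x_i$, the abelian differentials $v_i$, the period matrix $\O$, the prime forms $E(x,y)$, the Wronskian $W(x)$, the theta function, and the vector of Riemann constants $K^x$ all unchanged. The only quantities that change are the canonical abelian differential $v = \nd^{1/n}$, which scales as $v \mapsto \delta^{1/n} v$, and consequently the distinguished local parameters $\zeta_i$ defined by \eqref{distlp}, which scale as $\zeta_i \mapsto \delta^{1/(k_i+n)}\zeta_i$. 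First I would record these two elementary scalings and then propagate them through the normalized building blocks $E(\zeta,x_k)$, $E(x_k,x_l)$, $c(x)$ and the vectors $Z,Z'$.

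Next I would compute the scaling exponent contributed by each factor of \eqref{taudef}. Since $\Acal_x((\nd)) = \sum_i k_i \Acal_x(x_i)$ depends only on the divisor $(\nd)=\sum k_i x_i$, which is unchanged, the vectors $Z,Z'$ are unchanged, so the factor $e^{-\frac{\pi}{6}\langle \O Z,Z\rangle - \frac{2\pi\sqrt{-1}}{3}\langle Z,K^x\rangle}$ and the factor $c(x)^{2/3}$ are both invariant. For the remaining factors I would use the rescaling of the distinguished parameters: in $\left(\nd(x)/\prod_i E^{k_i}(x,x_i)\right)^{(g-1)/3n}$ the numerator $\nd(x)$ picks up $\delta$, while each $E(x,x_i)$ — evaluated via the limit formula involving $\sqrt{d\zeta_i/d\zeta}$ — picks up a factor $\delta^{-1/(2(k_i+n))}$ from the $(n+1)$-type normalization, and similarly each $E(x_i,x_j)$ picks up $\delta^{-\frac{1}{2(k_i+n)} - \frac{1}{2(k_j+n)}}$. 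Assembling these contributions with the exponents $(g-1)/3n$ and $k_ik_j/6n^2$, and using $\sum_i k_i = 2n(g-1)$ so that $(g-1) = \frac{1}{2n}\sum_i k_i$, one collects a single power $\delta^\kappa$ with $\kappa$ of the stated form \eqref{homco}.

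The main obstacle I anticipate is bookkeeping the precise normalizing powers in the definitions of $E(\zeta,x_k)$ and $E(x_k,x_l)$: these involve the ratio $\sqrt{d\zeta_k/d\zeta}$ evaluated at $x_k$, and one must be careful about which distinguished parameter is being used as the "reference" coordinate $\zeta$ and how the limit is taken, since $\zeta$ and $\zeta_k$ scale with different powers of $\delta$ (namely $\delta^{1/(k_k+n)}$ for $\zeta_k$ but $\delta$ for $\nd$ itself in the leading coefficient). A clean way to organize this is to note that $\nd \sim \zeta_i^{k_i}(d\zeta_i)^n$ forces $d\zeta_i \sim \nd^{1/n}\zeta_i^{-k_i/n}$, so a rescaling $\nd \mapsto \delta\nd$ with $\zeta_i \mapsto \delta^{1/(k_i+n)}\zeta_i$ is exactly the consistent solution; all the prime-form normalizations are then homogeneous of a definite weight in $\delta$, and the weights add up. A final consistency check I would perform is the special case $\bk = \mathbf{1}$ (simple zeros, $k_i=1$, $m=N=2n(g-1)$), where the formula gives $\kappa = \frac{1}{12n^2}\cdot N\cdot\frac{1+2n}{1+n} = \frac{(2g-2)(2n+1)}{6n(n+1)}$, which should match the known abelian case $n=1$ (giving $\kappa = (2g-2)/4$) and be compatible with the transformation properties already used in \eqref{Hodgeint}.
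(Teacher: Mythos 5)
Your overall strategy is exactly the paper's: the proposition is proved by tracking how each factor of \eqref{taudef} scales under $\nd\mapsto\delta\nd$, and your preliminary observations are right ($c(x)$, $\O$, $K^x$, the theta-exponential and the vectors $Z,Z'$ are unchanged since the divisor $(\nd)$ is unchanged, while $v\mapsto\delta^{1/n}v$ and $\zeta_i\mapsto\delta^{1/(k_i+n)}\zeta_i$). However, the scaling weights you assign to the prime-form factors have the wrong sign, and this is precisely the bookkeeping point you yourself flag as the main obstacle. Since $E(\zeta,x_i)$ is obtained from $E(\zeta(x),\zeta(y))$ (which does not involve $\nd$ at all) by multiplying by $\sqrt{d\zeta_i/d\zeta}\,(y)$, and the distinguished coordinate dilates, $E(x,x_i)$ picks up $\delta^{+1/(2(k_i+n))}$, not $\delta^{-1/(2(k_i+n))}$; likewise $E(x_i,x_j)$ picks up $\delta^{+\frac{1}{2(k_i+n)}+\frac{1}{2(k_j+n)}}$. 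With the correct signs, the factor $\bigl(\nd(x)/\prod_i E^{k_i}(x,x_i)\bigr)^{(g-1)/3n}$ contributes $\frac{g-1}{3n}\bigl(1-\frac12\sum_i\frac{k_i}{k_i+n}\bigr)$ to the exponent (the prime forms sit in the denominator), and the product over pairs contributes $\frac{1}{12n^2}\sum_i\frac{k_i(N-k_i)}{k_i+n}$, where $N=\sum_i k_i=2n(g-1)$; adding these and using $g-1=N/2n$ gives $\kappa=\frac{1}{12n^2}\sum_i\bigl(2k_i-\frac{k_i^2}{k_i+n}\bigr)=\frac{1}{12n^2}\sum_i\frac{k_i(k_i+2n)}{k_i+n}$, as required. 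If instead you carry through the signs as written in your proposal, the total comes out to $\frac{1}{12n^2}\sum_i\frac{k_i(3k_i+2n)}{k_i+n}$, which does not match \eqref{homco}; so the sign is not cosmetic but decides the result.

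A smaller correction: your proposed consistency check misquotes the abelian benchmark. For $\bk=\mathbf{1}$ the formula gives $\kappa=\frac{(g-1)(2n+1)}{6n(n+1)}$, hence $\kappa=\frac{g-1}{4}$ for $n=1$ (consistent with the abelian case of \cite{JDG,MRL} and with Theorem~\ref{holo}, where the ratio of exponents $8(g-1)(2n+1)/48n(n+1)$ is exactly $\kappa$), not $(2g-2)/4$. With these two fixes your plan goes through and coincides with the paper's argument; the paper also mentions an alternative derivation via the Riemann bilinear identity applied to the variational formulas \eqref{deftau1}, in the spirit of \cite{Adv}, but that route is not needed for your approach.
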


This proposition follows from the explicit formula (\ref{taudef}), but can also be derived by applying the Riemann bilinear identity to variational formulas 
for $\tau$ as in was done in \cite{Adv} in the context of Hurwitz spaces.

Combining Propositions \ref{symp} and \ref{homo}, we arrive at the following
\begin{theorem}\label{holo}
On the stratum $\Mgn[{\mathbf 1}]\subset\Mgno$ of $n$-differentials with simple zeroes, the power $\tau^{48n(n+1)}$ of the tau function $\tau=\tau(C,w)$ is a nowhere vanishing holomorphic section of the line bundle $\lambda^{48n(n+1)}\otimes L^{-8(g-1)(2n+1)}\longrightarrow\Mgn[{\mathbf 1}]\,.$
\end{theorem}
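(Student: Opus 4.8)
The plan is to assemble Theorem~\ref{holo} from the two preceding propositions together with the quasi-homogeneity exponent specialized to the partition $\mathbf 1=(1,\dots,1)$. First I would observe that on the stratum $\Mgn[\mathbf 1]$ we have $m=N=2n(g-1)$ and every $k_i=1$, so Proposition~\ref{homo} gives
\be
\ka=\f{1}{12n^2}\cdot 2n(g-1)\cdot\f{1+2n}{1+n}=\f{(g-1)(2n+1)}{6n(n+1)}\,.
\ee
Similarly, $d=\mathrm{l.c.m.}(1+n,\dots,1+n)=n+1$, so Proposition~\ref{symp} says $\tau$ transforms by $\e\,\det(C\O+D)$ with $\e$ a $48(n+1)$-th root of unity. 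Raising $\tau$ to the power $48n(n+1)$ kills the root of unity (since $48n(n+1)$ is a multiple of $48(n+1)$), so $\tau^{48n(n+1)}$ transforms by exactly $\det(C\O+D)^{48n(n+1)}$ under change of Torelli marking, and by $\d^{48n(n+1)\ka}=\d^{8(g-1)(2n+1)}$ under $\nd\mapsto\d\nd$.

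The next step is to translate these two transformation laws into the statement that $\tau^{48n(n+1)}$ is a section of a specific line bundle. The factor $\det(C\O+D)$ is precisely the automorphy factor of the Hodge bundle $\Omega_g$ (equivalently, of $\l=c_1(\det\Omega_g)$), so the $\det(C\O+D)^{48n(n+1)}$ behavior means $\tau^{48n(n+1)}$ is a section of $\l^{48n(n+1)}$ pulled back to $\Mgn[\mathbf 1]$, up to the homogeneity twist. The homogeneity factor $\d^{8(g-1)(2n+1)}$ under rescaling $\nd$ identifies the remaining twist: the tautological line bundle $L$ over $P\Mgn[\mathbf 1]$ (whose fiber is the line $\C\cdot\nd$) has the property that a function homogeneous of degree $\kappa'$ in $\nd$ is a section of $L^{-\kappa'}$. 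Hence the homogeneity exponent $8(g-1)(2n+1)$ contributes a factor $L^{-8(g-1)(2n+1)}$, giving the claimed line bundle $\l^{48n(n+1)}\otimes L^{-8(g-1)(2n+1)}$.

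Finally I would verify that $\tau^{48n(n+1)}$ is nowhere vanishing and holomorphic on $\Mgn[\mathbf 1]$. Holomorphicity is clear from the explicit formula~\eqref{taudef}: on the open stratum of $n$-differentials with simple zeros, the points $x_i$ are distinct and none is a node, so all prime forms $E(x_i,x_j)$, $E(x,x_i)$ are nonzero and holomorphic, $\nd(x)\neq 0$ at a generic auxiliary point $x$, the theta-constant expression $c(x)$ is a nonzero holomorphic function (the basis $v_1,\dots,v_g$ and the Wronskian $W(x)$ being well defined away from the Weierstrass-type degeneracies, which a generic choice of $x$ avoids), and the exponential prefactor is manifestly nonvanishing; one also checks the independence of the auxiliary point $x$, so the formula genuinely defines a section. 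The main obstacle — really the only nontrivial point — is a bookkeeping one: confirming that raising to the power $48n(n+1)$ both clears the $48(n+1)$-th root of unity from Proposition~\ref{symp} and turns the fractional homogeneity weight $\ka$ into the integer $8(g-1)(2n+1)$, i.e. checking $48n(n+1)\ka=8(g-1)(2n+1)$, which is immediate from the computation of $\ka$ above. Everything else is a direct reading-off of automorphy factors.
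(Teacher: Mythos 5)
Your proposal is correct and follows the paper's own route: the paper proves Theorem~\ref{holo} precisely by combining Propositions~\ref{symp} and~\ref{homo}, and your bookkeeping ($\kappa=\frac{(g-1)(2n+1)}{6n(n+1)}$, $d=n+1$, so the power $48n(n+1)$ clears the degree-$48(n+1)$ root of unity and turns the homogeneity weight into the integer $8(g-1)(2n+1)$) is exactly what that combination requires. Your identification of the automorphy factor $\det(C\Omega+D)$ with $\lambda$ and of degree-$p$ homogeneity in $\nd$ with sections of $L^{-p}$, plus nonvanishing read off from the explicit formula~\eqref{taudef}, matches the paper's (terse) argument.
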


In order to find the divisor of the section $\tau^{48n(n+1)}$ on $\Mgno$, we will compute the asymptotics of $\tau$ at the boundary divisors
$D_{\rm deg}$ and $D_j,\;j=0,1,\ldots,[ g/2 ]$. For that we need to study the tau function more carefully.

\subsection{Homological coordinates and variational formulas for the tau function}

The tau function $\tau(C,w)$ satisfies a system of linear differential equations on the space $\Mgn$ similar
to the tau functions on Hurwitz spaces, or spaces of abelian or quadratic differentials \cite{JDG,Adv,MRL,Contemp}.
Here we assume that all zeros of $\nd$ are simple i.e. that all $k_i=1$ in (\ref{taudef}).

The homology group $H_1(\Ch,\C)$ 
can be decomposed into the eigenspaces of the automorphism $\sigma_*$: 
$$
H_1(\Ch,\C)=\bigoplus_{i=0}^{n-1} \Hc_k\,,
$$
where $\dim \Hc_0=2g$ and in the case of simple zeros  the dimensions of $\Hc_k$ for $k=1,\dots,n-1$ are independent of $k$ and
\be
\dim\, \Hc_k= (2n+2)(g-1)\;, \quad k=1,\dots,n-1
\la{dimHk}
\ee
The dimensions (\ref{dimHk}) can be computed as the dimensions of the dual spaces $\Hc^k$ in cohomology
of $\Ch$, where $\Hc^k$ is the subspace of $H^1(\Ch,\R)$ corresponding to eigenvalue $\rho^k$.
The space $\Hc^k$ can be decomposed as $\Omega^{(k)}\oplus \overline {\Omega}^{(n-k)}$ (since 
$\rho^k=\overline{\rho}^{n-k}$) and, using (\ref{dimOk}), we get $\dim\,\Hc^k= (2n+2)(g-1)$.

For any two classes $s_1\in \Hc_{l}$ and $s_2\in \Hc_{k}$ we have $s_1\circ s_2=0$ unless $k+l=n$.
The spaces $\Hc_k$ and $\Hc_{n-k}$ are, therefore, dual to each other with respect to the standard intersection pairing
(the space $\Hc_0$ can be identified with $H_1(C)$, and, therefore, it is self-dual). 
On the other hand, for any $q\in \Omega^{(k)}$ and $s\in \Hc_l$ we have $\int_s q=0$ unless $k=l$.
In particular, since $v=w^{1/n}\in \Omega^{(1)}$, it can have non-trivial periods only over the cycles representing homology classes in $\Hc_1$.
Actually, $\Hc_1$ can be naturally identified with the tangent space to the moduli space $\Mgn$. 
Choosing a basis $\{s_i\}_{i=1}^{(2n+2)(g-1)}$ in $\Hc_1$ we introduce {\it homological coordinates}
$\Pcal_i$ on $\Mgn$ by the formula 
\be
\Pcal_i=\int_{s_i}v
\la{Homco}
\ee
(see also Corollary 2.3 of \cite{Grushevsky-Moeller2016}).

Choose a Torelli marking on $C$ and define the associated canonical bimeromorphic differential $B(x,y)=d_x d_y\log E(x,y)$ on $C$
(here $E(x,y)$ is the prime form). The bidifferential $B$ is symmetric with a second order pole with biresidue $1$ on the diagonal $x=y$ and vanishing $a$-periods
with respect to both arguments.

The bidifferential $B(x,y)$ has the following local behaviour near the diagonal $x=y$:
\be
B(x,y)= \left(\f{1}{(\zeta(x)-\zeta(y))^2}+\f{1}{6}S_B(\zeta(x))+\dots\right)d\zeta(x)d\zeta(y\,;
\la{Bhatdiag}
\ee
here $\zeta(x)$ is a local parameter, and $S_B(x)$ is the so-called Bergman projective connection.


To study the tau function on $\Mgn$ we will need some variational formulas for $B(x,y)$.
For a basis $\{s_i\}_{i=1}^{(2n+2)(g-1)}$ in $\Hc_{1}$ consider the dual basis
$\{s_j^*\}_{j=1}^{(2n+2)(g-1)}$ in $\Hc_{n-1}$, so that $s_j^*\circ s_i=\delta_{ij}$.

Consider a fundamental polygon $\tilde{C}$ of $C$ (that is, dissect $C$ along the cuts representing the Torelli marking).  
Choose a system $\Gamma$ of non-intersecting cuts that lie within $\tilde{C}$ and connect the first zero $x_1$ with other zeros of $w$. 
Pick a connected component of $f^{-1}(\tilde{C}\setminus\Gamma)\subset\Ch$ and identify it with $\tilde{C}\setminus\Gamma$. 
On $\tilde{C}\setminus\Gamma$ introduce the coordinate
\be
z(x)=\int_{x_1}^x v\,,
\la{flatco}
\ee
where the path connecting $x$ with $x_1$ entirely lies in $\tilde{C}\setminus\Gamma$.

\begin{theorem} \la{varBhteo}
The following variational formula holds for $i=1,\dots,(2n+2)(g-1)$:
\be
\f{\p}{\p \Pcal_i} B (z(x),z(y)) = \f{1}{2\pi \sqrt{-1} n}\int_{s_i^*} \f{B(z(x),\cdot) B(\cdot,z(y))}{v} 
\la{varBh}
\ee
\end{theorem}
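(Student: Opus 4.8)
The plan is to derive the variational formula by the now-standard Rauch-type argument adapted to the cyclic-covering setting, computing $\partial_{\Pcal_i}$ of the canonical bidifferential $B$ on $C$ by lifting the whole picture to the covering $\Ch$, where the deformation is realized by cutting along the dual cycle $s_i^*$ and gluing with a shift. First I would fix a point $(C,\nd)$ in $\Mgn[\mathbf 1]$ and recall that, by the discussion preceding the theorem, the homological coordinates $\Pcal_i=\int_{s_i}v$ with $\{s_i\}\subset\Hc_1$ form a local coordinate system, the tangent space being canonically identified with $\Hc_1$. The differential $v=\nd^{1/n}\in\Omega^{(1)}$ pulls back from $C$ in the sense that its periods detect only the $\Hc_1$ part; deforming $\Pcal_i$ by $\e$ amounts to the classical construction of cutting $\Ch$ along a representative of $s_i^*\in\Hc_{n-1}$ and regluing the two banks with a relative shift of $v$ by $\e$, which changes exactly the period over $s_i$ and leaves the others fixed. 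Throughout one uses that $s_j^*\circ s_i=\delta_{ij}$ and that $v$ has no residues, so the surgery is well defined.

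Next I would express $B(z(x),z(y))$ in the flat coordinate $z(x)=\int_{x_1}^x v$ on the cut-open fundamental domain $\tilde C\setminus\Gamma$ (lifted to the chosen sheet of $\Ch$), and differentiate along the deformation. The key computational step is Riemann's bilinear-type identity: the derivative of $B$ picks up a contour integral over the cut along $s_i^*$ of the product of the two "half" bidifferentials $B(z(x),\cdot)$ and $B(\cdot,z(y))$ divided by $v$, because $dz=v$ and the jump of $z$ across the cut is the period being varied. The factor $\frac{1}{2\pi\sqrt{-1}}$ is the usual residue/Cauchy normalization, and the factor $\frac1n$ appears because $B$ lives on $C$ while the cut $s_i^*$ lives on $\Ch$, so one must divide by the degree of $f:\Ch\to C$ when pushing the computation back down — equivalently, the $\sigma_*$-eigenspace projector contributes the $\frac1n$ when one averages over the $\Z/n\Z$-action. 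One checks that $B(z(x),\cdot)B(\cdot,z(y))/v$ is a meromorphic $1$-form in the integration variable on $\Ch$ with poles only at $x$ and $y$ (simple, since $v$ has zeros of order $n$ only at the ramification points $\xh_i$, which are not on the cut), so the integral over the closed cycle $s_i^*$ is well defined and independent of the representative.

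The main obstacle I anticipate is the bookkeeping around the zeros of $v$ and the base point $x_1$: the coordinate $z$ and the cuts $\Gamma$ are tied to $x_1$, and under the deformation the zeros $x_i$ and the cut system move, so one must verify that the apparent contributions from $\Gamma$ and from the $\xh_i$ cancel — this is where the choice of the distinguished parameters $\zeta_i$ and the precise normalization of $v$ matter. A clean way around it is to argue that the right-hand side of \eqref{varBh} is manifestly a well-defined bidifferential holomorphic in $x,y$ away from the diagonal with the correct singularity structure, and that its $a$-periods vanish (because $B(\cdot,z(y))$ has vanishing $a$-periods and $s_i^*\in\Hc_{n-1}$ pairs trivially with $\Hc_0\supset\{a\text{-cycles}\}$), so both sides satisfy the same characterizing properties; matching one normalization constant then finishes the proof. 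This is the same strategy used in \cite{JDG,Adv,MRL,Contemp}, and I would cite those for the analytic details while emphasizing the one new point, the eigenspace decomposition that produces the $\frac1n$.
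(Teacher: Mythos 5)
Your proposal follows essentially the same route as the paper: the paper proves Theorem \ref{varBhteo} by deriving it from the variational formulas on the stratum $\mathcal{H}_{\gh}(n,\dots,n)$ of abelian differentials on the canonical cover (Theorem 3 of \cite{JDG}), adapted as in Lemma 5 of \cite{Contemp} and Proposition 3.2 of \cite{BKN} — i.e.\ precisely the cut-and-reglue deformation on $\Ch$ together with the $\Z/n\Z$-eigenspace decomposition of $H_1(\Ch,\C)$ producing the factor $1/n$, which is what you outline (and you cite the same sources for the analytic details). One small correction to your bookkeeping: the integrand $B(z(x),\cdot)B(\cdot,z(y))/v$ has double poles at $x$ and $y$ and poles of order $n$ at the zeros $\xh_i$ of $v$, not just simple poles at $x,y$; the integral over classes in $\Hc_{n-1}$ is nonetheless well defined because the integrand lies in a nontrivial $\sigma^*$-eigenspace, so its residues at the fixed points $\xh_i$ vanish.
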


Formula (\ref{varBh}) can be derived from the variational formulas for the stratum $\mathcal{H}_{\hat g}(n,\ldots,n)$ of the moduli space of 
holomorphic 1-differentials of genus $\gh=n^2(g-1)+1$, see Theorem 3 of \cite{JDG}, 
in a way similar to Lemma 5 of \cite{Contemp} and Proposition 3.2 of \cite{BKN}. 

Consider the differential operator  
$S_v=\frac{v''}{v}-\frac{3}{2}\left(\frac{v'}{v}\right)^2$ (that is, 
the Schwarzian derivative of the abelian integral $\int^x v$
with respect to the coordinate $z$ on $C$). 
For the holomorphic 1-differential $v$, $S_v$ is a meromorphic projective connection on $C$, 
so that the difference $S_B-S_v$ is a meromorphic quadratic differential. 

The tau function $\tau=\tau(C,w)$ satisfies the following system of differential equations with respect to the homological coordinates $\Pcal_i$ on $P\Mgn$:
\be
\frac{d}{d\Pcal_i}\log\tau=-\f{1}{12\pi \sqrt{-1}n}\int_{s_i^*}\f{ S_B-S_v}{v}
\la{deftau1}
\ee
(notice that the differential $(S_B-S_v)/v$ gets multiplied by $\rho^{-1}$ under the action of $f^*$; thus its integral over $s_i^*\in \Hc_{n-1}$ is non-trivial).
The compatibility of the system (\ref{deftau1}) follows from the symmetry of the expression
$$
\f{\p}{\p \Pcal_j}\int_{s_i^*}\f{ S_B-S_v}{v}=\f{1}{12\pi \sqrt{-1} n} \int_{s_i^*}\int_{s_j^*}\f{B(z(x),z(y))B(z(y),z(x))}{v(z(x))v(z(y))}
$$
under the interchange of $i$ and $j$.

\subsection{Asymptotics of tau function near the boundary of $\Mgno$}

Here we compute the asymptotics of $\tau$ near the boundary $\Mgno\setminus\Mgn[{\bf 1}]$ of $\Mgno$ that consist of the following divisors:
\begin{itemize}
\item $D_{{\rm deg}}$, the divisor of $n$-differentials with multiple zeroes,
\item $D_0$, the divisor of $n$-differentials on irreducible nodal curves, and
\item $D_j,\; j=1,\ldots,[ g/2 ]$, the divisors of $n$-differentials on reducible nodal curves.
\end{itemize}

\subsubsection{Coalescing simple zeros of $\nd$: divisor $D_{{\rm deg}}$}

\begin{lemma}\label{coorddeg}
Let $x_1$ and $x_2$ be two zeros of $\nd$ coalescing at $D_{{\rm deg}}$. Then 
a transversal local coordinate on $\Mgno$ in a tubular neighbourhood of $D_{{\rm deg}}$ is given by
\be
t_{{\rm deg}}=\left(\int_{x_1}^{x_2}v \right)^{2n/(n+2)}\;.
\la{tdeg}
\ee
\end{lemma}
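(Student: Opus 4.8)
The statement is local: we work in a neighbourhood of a point $(C_0,\nd_0)\in\widetilde D_{\rm deg}$, where $\nd_0$ has exactly one double zero $x_0$ and simple zeros elsewhere. By the local description given just above in Section~\ref{sec:dist} (the degenerate-differentials paragraph), a slice transversal to $D_{\rm deg}$ in $\Mgno$ is parametrized by the coefficient $a$ in the normal form $\nd=(\zeta^2+a)(d\zeta)^n$, where $\zeta$ is the local coordinate on $C$ pinned down by this very equation; equivalently $a$ is (up to a sign ambiguity that disappears in $X(g,n)\to\Mgno$) the square of $\tfrac12(\zeta_1-\zeta_2)$, $\zeta_i=\zeta(x_i)$, when the two colliding simple zeros are $x_1,x_2$. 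So the content of the lemma is to show that the intrinsic quantity $t_{\rm deg}=\bigl(\int_{x_1}^{x_2}v\bigr)^{2n/(n+2)}$ is, to leading order, a nonzero constant multiple of $a$ (hence also a valid transversal coordinate).

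First I would compute $\int_{x_1}^{x_2}v$ in the normal form. On the double cover near $x_0$ we have $v=\nd^{1/n}=(\zeta^2+a)^{1/n}\,d\zeta$, and the integration path joins the two branch points $\zeta=\pm\sqrt{-a}$; more precisely one integrates the holomorphic $1$-form $v$ on $\Ch$ along the arc over the segment $[\zeta_1,\zeta_2]$. Thus
\[
\int_{x_1}^{x_2}v=\int_{-\sqrt{-a}}^{\sqrt{-a}}(\zeta^2+a)^{1/n}\,d\zeta .
\]
Rescale $\zeta=\sqrt{-a}\,\eta$: the integral becomes $(-a)^{1/2}\,(-a)^{1/n}\int_{-1}^{1}(1-\eta^2)^{1/n}\,d\eta$, i.e.\ a fixed nonzero Beta-function constant times $(-a)^{\frac12+\frac1n}=(-a)^{\frac{n+2}{2n}}$. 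Raising to the power $2n/(n+2)$ gives $t_{\rm deg}=(\text{const}\neq 0)\cdot(-a)$, with the constant $\bigl(\int_{-1}^{1}(1-\eta^2)^{1/n}d\eta\bigr)^{2n/(n+2)}$ independent of the point of $\widetilde D_{\rm deg}$. Since the $(n+2)/(2n)$-th root and the $n$-th roots are only defined up to roots of unity, $t_{\rm deg}$ is well-defined only up to such a root of unity, exactly as in the discussion of distinguished parameters; this causes no problem for its use as a transversal coordinate, and I would remark on it explicitly.

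It remains to argue that the leading-order identity $t_{\rm deg}\sim\text{const}\cdot a$ is uniform in the transversal directions along $\widetilde D_{\rm deg}$, so that $t_{\rm deg}$ really is a coordinate and not merely a coordinate on each fibre. For this I would note that the normal form $\nd=(\zeta^2+a)(d\zeta)^n$ depends holomorphically on the point of $W\subset D_{\rm deg}$ (the coordinate $\zeta$ being cut out by an exact relation, as in Section~\ref{sec:dist}), and that the local computation above is just the leading Taylor coefficient of a holomorphic function of $a$ whose derivative at $a=0$ is the nonzero constant found above; holomorphic dependence on the $W$-parameters then gives a nonvanishing Jacobian for the change of coordinates from $a$ to $t_{\rm deg}$ on a neighbourhood of $(C_0,\nd_0)$. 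The main technical point — the only place any care is needed — is the justification that the abelian integral of $v$ on $\Ch$ along the vanishing arc is genuinely captured by the one-variable integral $\int_{\zeta_1}^{\zeta_2}(\zeta^2+a)^{1/n}d\zeta$ up to higher-order corrections in $a$; this follows because the contributions from the rest of the (fixed) curve $C_0$ and from the regular part of $\nd$ near $x_0$ are $O(a)$ relative to the leading term, as the contour shrinks to the point $x_0$. I expect this uniformity/error-estimate step, rather than the explicit integral, to be the only real obstacle, and it is handled by the standard vanishing-cycle analysis used throughout this circle of ideas (cf.\ \cite{Contemp,MRL}).
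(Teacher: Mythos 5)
Your proposal is correct and follows essentially the same route as the paper: both put $\nd$ into a local normal form near the colliding zeros ($(\zeta^2+a)(d\zeta)^n$ in your version, $(\zeta-\zeta_1)(\zeta-\zeta_2)(d\zeta)^n$ in the paper's, which are equivalent up to a shift of $\zeta$) and evaluate $\int_{x_1}^{x_2}v$ by a scaling argument to get a nonzero constant times $(\zeta_1-\zeta_2)^{(n+2)/n}$, so that $t_{\rm deg}$ is proportional to $(\zeta_1-\zeta_2)^2\sim -a$, a label-independent transversal parameter. Your extra remarks on the root-of-unity ambiguity and on uniformity in the $D_{\rm deg}$-directions only make explicit points the paper's terse proof leaves implicit.
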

\begin{proof} It is parallel to the proof of Lemma 8 of \cite{Contemp}.
Denote by $\zeta$ a local coordinate in a small disk $U$  containing the coalescing  zeros $x_{1,2}$ and no other zeros.
Then we can write in $U$ 
\be
\nd(\zeta)=(\zeta-\zeta(x_1))(\zeta-\zeta(x_2)) (d\zeta)^n\,,
\la{ndloc}\ee
so that
$$
\int_{x_1}^{x_2}v = 
\int_{\zeta(x_1)}^{\zeta(x_2)} \left((\zeta-\zeta(x_1))(\zeta-\zeta(x_2))\right)^{1/n} d\zeta = {\rm const}\cdot(\zeta(x_1)-\zeta(x_2))^{(n+2)/n}\,.
$$
and the parameter $t_{{\rm deg}}$ defined by (\ref{tdeg}) looks like
Since $(\zeta(x_1)-\zeta(x_2))^2$ is independent of labeling of zeroes, $t_{{\rm deg}}$ is a coordinate transversal to $D_{{\rm deg}}$.
\end{proof}

\begin{lemma}
The tau function $\tau(C,\nd)$  has the following asymptotics near $D_{{\rm deg}}$:
\be
\tau(C,\nd)= t_{{\rm deg}}^{\f{1}{12n(n+1)}} \tau(C_0,\nd_0) (1+o(1))
\la{astaudeg}
\ee
where $(C_0,\nd_0)\in D_{{\rm deg}}$.
\end{lemma}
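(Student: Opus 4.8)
The plan is to track the asymptotics of each factor in the defining formula~\eqref{taudef} for $\tau(C,\nd)$ as the two simple zeros $x_1,x_2$ collide, using the transversal coordinate $t_{{\rm deg}}$ of Lemma~\ref{coorddeg}. Since $t_{{\rm deg}}$ is, up to a nonzero constant, $(\zeta(x_1)-\zeta(x_2))^{(n+2)/n}$, equivalently $(\zeta(x_1)-\zeta(x_2))^2 = {\rm const}\cdot t_{{\rm deg}}^{2n/(n+2)}$, it suffices to compute the power of $(\zeta(x_1)-\zeta(x_2))$ carried by $\tau$ and then re-express it in $t_{{\rm deg}}$. Only the two factors involving the colliding points are singular: the product $\prod_{i<j} E(x_i,x_j)^{k_ik_j/6n^2}$ contains the single singular term $E(x_1,x_2)^{1/6n^2}$ (all $k_i=1$ on the stratum $\Mgn[\mathbf 1]$), and the factor $\big(\nd(x)/\prod_i E^{k_i}(x,x_i)\big)^{(g-1)/3n}$ contains $E(x,x_1)^{-(g-1)/3n}E(x,x_2)^{-(g-1)/3n}$; as $x_1,x_2\to x_0$ these two combine, in the distinguished local parameter, into a finite nonzero limit (the prime form $E(x,x_0)$ appears to the appropriate power matching the double zero of $\nd_0$), so they contribute no singular power of $(\zeta(x_1)-\zeta(x_2))$. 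The remaining factors $c(x)^{2/3}$, the exponential in $Z,K^x$, and the theta-related quantities depend continuously on the curve $C$ and on the divisor $(\nd)$, and as $(\nd)=(x_1)+(x_2)+\cdots \to 2(x_0)+\cdots$ they tend to the corresponding quantities for $(C_0,\nd_0)$; in particular the vectors $Z,Z'$ defined by $\tfrac1n\Acal_x((\nd))+2K^x=\O Z+Z'$ vary continuously through the collision.

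So the entire singular behavior comes from $E(x_1,x_2)^{1/6n^2}$. In the distinguished local parameter $\zeta$ near $x_0$ one has $E(x_1,x_2)\sim \zeta(x_1)-\zeta(x_2)$ (the prime form is $\sim$ the coordinate difference to leading order, with the half-differential factors absorbed into the $E(x_k,x_l)$ normalization in~\eqref{taudef}), hence
\be
\tau(C,\nd) = {\rm const}\cdot(\zeta(x_1)-\zeta(x_2))^{1/6n^2}\,\tau(C_0,\nd_0)\,(1+o(1)).
\ee
Now substitute $(\zeta(x_1)-\zeta(x_2)) = {\rm const}\cdot t_{{\rm deg}}^{n/(n+2)}$, giving exponent $\tfrac{1}{6n^2}\cdot\tfrac{n}{n+2}=\tfrac{1}{6n(n+2)}$. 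This does \emph{not} match the claimed $\tfrac{1}{12n(n+1)}$, which tells me the bookkeeping of the several prime-form factors must be redone more carefully: the cross terms $E(x,x_1)E(x,x_2)$ in the $\big(\cdots\big)^{(g-1)/3n}$ factor do contribute, because $\nd(x)$ near $x_0$ behaves like $(\zeta(x)-\zeta(x_1))(\zeta(x)-\zeta(x_2))$ while the distinguished parameter is fixed by $\nd = \zeta^2(d\zeta)^n$ at the limit, forcing a rescaling of $\zeta$ by a power of $(\zeta(x_1)-\zeta(x_2))$ that feeds back into every occurrence of the local coordinate. The right approach is therefore: choose the distinguished parameter $\zeta$ adapted to $(C_0,\nd_0)$ once and for all, write $\nd_\epsilon = (\zeta-a)(\zeta+a)(d\zeta)^n$ with $2a=\zeta(x_1)-\zeta(x_2)$, and expand \emph{all} prime-form factors simultaneously, being careful that the normalizations $E(\zeta,x_k)$, $E(x_k,x_l)$ in~\eqref{taudef} are taken with respect to the \emph{distinguished} parameters $\zeta_k$ at $x_1,x_2$, which themselves scale like $a^{\cdot}$ relative to $\zeta$.

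\textbf{Main obstacle.} The delicate point — and the one I expect to consume most of the work — is exactly this juggling of two families of local coordinates: the fixed parameter $\zeta$ near $x_0$ on $C_0$, and the distinguished parameters $\zeta_1,\zeta_2$ at the two colliding zeros, which are pinned by $\nd\sim\zeta_k(d\zeta_k)^n$ and hence are related to $\zeta$ by $\zeta_k \sim {\rm const}\cdot(\zeta - \zeta(x_k))\cdot(\text{gap})^{?}$. Getting the half-integer powers of the gap right in $E(\zeta,x_1)$, $E(\zeta,x_2)$, $E(x_1,x_2)$ and in $\nd(x)^{(g-1)/3n}$, and checking they combine to the clean exponent $\tfrac{1}{12n(n+1)}$, is the crux; the cross-check is the quasi-homogeneity weight $\kappa$ of Proposition~\ref{homo} specialized to $\bk=(2,1,\dots,1)$, which must be consistent with the $t_{{\rm deg}}$-exponent, and I would use~\eqref{homco} as an independent verification of the final number. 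Once the exponent is confirmed, continuity of every non-prime-form factor through the degeneration (which is routine, as each is a holomorphic function of the period matrix, the Abel image of $(\nd)$, and the base point, all of which extend across $\widetilde D_{{\rm deg}}$) closes the argument.
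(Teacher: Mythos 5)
Your write-up is a plan, not a proof: the one thing the lemma actually asserts, the exponent $\frac{1}{12n(n+1)}$, is never derived. Your first-pass count of singular factors in \eqref{taudef} produces $\frac{1}{6n(n+2)}$, you correctly recognize that this is wrong and correctly diagnose why (the distinguished parameters at the two colliding zeros, being pinned by $\nd\sim\zeta_k(d\zeta_k)^n$, themselves scale with the gap, so extra powers of $\zeta(x_1)-\zeta(x_2)$ enter through the $\sqrt{d\zeta_k/d\zeta}$ normalizations of $E(x,x_1)$, $E(x,x_2)$, $E(x_1,x_2)$ and through $\nd(x)^{(g-1)/3n}$) --- but the promised ``careful redo'' is then only described, with the decisive bookkeeping explicitly deferred as ``the crux.'' As it stands the argument establishes neither the power-law form nor the value of the power, so there is a genuine gap.

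The fix is to invert your own logic, which is exactly what the paper does. Use the explicit formula \eqref{taudef} (or the variational system \eqref{deftau1}) only to conclude that $\tau(C,\nd)\sim t_{\rm deg}^{\,p}\,\tau(C_0,\nd_0)$ for \emph{some} exponent $p$ as $t_{\rm deg}\to 0$; then determine $p$ by the quasi-homogeneity of Proposition~\ref{homo}, which you relegated to an ``independent verification.'' Specializing \eqref{homco} to $\bk=(1,\dots,1)$ and to $\bk=(2,1,\dots,1)$ gives $\kappa-\kappa_0=\frac{1}{6n(n+1)(n+2)}$, while $t_{\rm deg}$ of \eqref{tdeg} has homogeneity degree $\frac{2}{n+2}$ under $\nd\mapsto\delta\nd$ (since $v=\nd^{1/n}$); matching weights in $\tau\sim t_{\rm deg}^{\,p}\tau_0$ forces $p\cdot\frac{2}{n+2}=\frac{1}{6n(n+1)(n+2)}$, i.e.\ $p=\frac{1}{12n(n+1)}$. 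This sidesteps entirely the two-coordinate prime-form bookkeeping that your proposal identifies as the main obstacle; if you insist on the direct expansion, you must actually carry out that bookkeeping, since your current leading-order count demonstrably gives the wrong exponent.
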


\begin{proof} The asymptotics (\ref{astaudeg}) can be derived by computing the asymptotics of all factors in the explicit formula (\ref{taudef}).  
Alternatively, using the system of equations  (\ref{deftau1}) we see that in the limit $t_{{\rm deg}}\to 0$ the tau function $\tau(C,\nd)$ behaves like $t_{{\rm deg}}^p \tau(\C_0,\nd_0)$ for some power $p$, where $\nd_0$ is a differential with one zero of order two and all other zeroes simple. To find 
$p$ explicitly, we look at the transformation properties of $\tau$, $\tau_0$ and $t_{{\rm deg}}$ under the rescaling $\nd\mapsto\delta\nd$. 
The homogeneity degrees $\kappa$ of $\tau$ and $\kappa_0$ of $\tau_0$ are given by the formula (\ref{homco}), so that
$$
\kappa-\kappa_0= \f{1}{6n(n+1)(n+2)}
$$
On the other hand, the local parameter $t_{{\rm deg}}$ has homogeneity degree $2/(n+2)$, which gives 
$p=\f{1}{12n(n+1)}$.
\end{proof}

\subsubsection{Asymptotics of $\tau$ near $D_0$}

Take two loops $a$ and $b$ on $C$ intersecting transversally at one point; their homology class we will also denote by $a,\,b\in H_1(C,\Z)$. 
Let us pinch $a$ to a point, then $C$ degenerates to a nodal curve $C_0$ that we represent by a smooth curve of genus $g-1$ with two points (say, $x_0$ and $y_0$)
identified (we assume that all zeros of $\nd$ remain far from the node).
The holomorphic $n$-differential $\nd$ on $C$ degenerates to a meromorphic $n$-differential $\nd_0$ on $C_0$ with poles of degree $n$ at 
$x_0$ and $y_0$ such that the corresponding $n$-residues differ by $(-1)^n$.


The canonical covering $\Ch$ of $C$ degenerates to a nodal curve $\Ch_0$  
with $n$ nodes that can be thought of as $n$ pairs of points $x_0^{(m)}=\sigma^m(x_0)$ and  $y_0^{(m)}=\sigma^m(y_0),\;m=0,\dots,n-1$, on the normalization of $\Ch_0$ that are pairwise  identified (here $\sigma$ is the covering automorphism of $\Ch_0$).
The differential $v$ on $\Ch$ degenerates to a meromorphic differential $v_0$ on $\Ch_0$ with simple poles at 
the preimages of nodal points with residues at $x_0^{(m)}$ and  $y_0^{(m)}$ that differ by a sign.


Choose one of $n$ simple loops on in the preimage $f^{-1}(a)\subset\Ch$. Let us assume that this
loop pinches to the first node on $\Ch_0$.

Consider the classes $\a,\b\in\Hc_{1}$ given by
\be
\a=\sum_{m=0}^{n-1} \rho^{-m}\sigma_*^{-m} a\;,\quad
\b=\sum_{m=0}^{n-1} \rho^{-m}\sigma_*^{-m} b
\ee
(where $\sigma$ is the covering automorphism of $\Ch$),
and introduce the homological coordinates $\Pcal_\a=\int_{\a} v$ and
$\Pcal_\b=\int_{\b} v$ associated with $\a$ and  $\b$.

A local coordinate on $\Mgn$ transversal to $D_0$ in a tubular neighborhood can be chosen as
$$
t_0=e^{2\pi i \Pcal_\b/\Pcal_\a}
$$
(notice that ${\rm Im} (\Pcal_\b/\Pcal_\a)>0$ near $D_0$).

We may assume that $\Pcal_\a$ remains constant under the degeneration of $C$ to $C_0$.
Let $\omega_{x,y}$ be the abelian differential of the 3rd kind on $\Ch_0$ with simple poles at points $x$ and $y$ of residues $+1$ and $-1$ respectively, normalized with respect to some canonical basis $(a_i,b_i)$ in $H_1(\Ch_0,\Z)$. Since $v_0\in \mathcal{H}^{1}$, it can be written as
$$
v_0=\f{\Pcal_\a}{2\pi \sqrt{-1}}\, \sum_{j=0}^{n-1} \rho^m \omega_{\sigma^m(x_0),\,\sigma^m(y_0)}+ {\rm{holomorphic\; terms}}\,.
$$

In the limit $t_0\to 0$ the bidifferential $B(x,y)$ on $C\times C$ tends to the meromorphic bidifferential $B_0(x,y)$ on $C_0\times C_0$
with the same properties.

To find the asymptotics of the tau fuction $\tau$ as $t_0\to 0$ (i.e. $\Pcal_\b \to \infty$), consider the equation
\be
\f{\p \log \tau}{\p \Pcal_\b} =-\f{1}{12 \pi\sqrt{-1} n}\int_{\b^*} \f{S_B-S_v}{v} 
\underset{t_0\to 0}{\longrightarrow} -\f{1}{6n} {\rm res}_{x_0}\f{S_{B_0}-S_{v_0}}{v_0}\;,
\la{derPb}
\ee
where $\b^*=\f{1}{n}\sum_{m=0}^{n-1} \rho^{m}\sigma_*^{m} a \in\mathcal{H}_{n-1}$ is the class dual to $\b\in\mathcal{H}_1$.

To compute the residue, choose a local coordinate $\zeta$ near $x_0$ on $C_0$ such that $S_{B_0}=0$. Then we have
$v_0=\f{\Pcal_\a}{2\pi\sqrt{-1}n}\f{d\zeta}{\zeta}+ O(1)$ and
$$
\left\{\int v_0,\zeta\right\}= \left(\f{v_0'}{v_0}\right)' -\f{1}{2}\left(\f{v_0'}{v_0}\right)^2 =\f{1}{2\zeta^2}+ O(1)\,
$$
as $t_0\to 0$, so that
$
{S_{v_0}}/{v_0}= \f{\pi \sqrt{-1} n}{\Pcal_\a}\f{d\zeta}{\zeta}+ O(1)$\;.
Therefore, (\ref{derPb}) implies
\be
\f{\p\log\tau}{\p \Pcal_\b}\Big|_{t_0=0}= \f{\pi \sqrt{-1} }{6\Pcal_\a}
\ee
and $\tau\sim e^{\pi \sqrt{-1}\Pcal_\b/6\Pcal_\a}$, i.e.
\be
\tau= t_0^{1/12} ({\rm const} + o(1))
\la{asympD0}
\ee
as $t\to 0$.  

\subsubsection{Asymptotics of $\tau$ near $D_j$}

Contracting a homologically trivial simple loop $\gamma$ on $C$ we get a reducible nodal curve $C_0$ that splits into two irreducible components
$C_1$ and $C_2$ of genera $g_1=j$ and $g_2=g-j$ respectively, $j=1,\ldots,[ g/2 ]$. Denote by $x_0\in C_1$ and $y_0\in C_2$ 
the intersection point of $C_1$ and $C_2$ (the node of $C_0$). The holomorphic $n$-differential $w$ on $C$ degenerates to a pair of meromorphic $n$-differentials
$w_1$ and $w_2$ on $C_1$ and $C_2$ respectively, with poles of order $n$ at $x_0\in C_1$ and $y_0\in C_2$ whose $n$-residues differ by $(-1)^n$ (we assume that under generation the zeroes of $w$ stay away from the vanishing cycle $\gamma$).

Denote by $f_i:\Ch_i\to C_i$ the canonical $n$-fold covering ($i=1,2$), and let $x_0^{(1)},\ldots,x_0^{(n-1)}$ (resp. $y_0^{(1)},\ldots,y_0^{(n-1)}$) denote the 
preimages of the node in $\Ch_1$ (resp. $\Ch_2$) that are cyclically ordered relative to the covering maps $\sigma_i:\Ch_i\to\Ch_i$. 
The canonical cover $\Ch_0$ of the nodal curve $C_0$ is obtained from $\Ch_1$ and $\Ch_2$ by identifying 
$x_0^{(m)}$ with $y_0^{(m)}$ for each $m=0,\dots,n-1$.

Define the 1-form $v_i$ on $\Ch_i$ by $v_i^n=f_i^*\nd_i,\;(i=1,2)$.
The (meromorphic) 1-forms $v_1$ and $v_2$ have first order poles at the $n$ preimages of the node whose residues differ by a sign,
i.~e. ${\rm res}|_{x_0^{(m)}} v_1=-{\rm res}|_{y_0^{(m)}} v_2$.
Moreover, applying $m$ times the covering map $\sigma_0$, we see that ${\rm res}|_{x_0^{(m)}} v_1=\rho^{-m}{\rm res}|_{x_0^{(0)}} v_1$ 
(resp. ${\rm res}|_{y_0^{(m)}} v_2=\rho^{-m}{\rm res}|_{y_0^{(0)}} v_2$).

The preimage $f^{-1}(\gamma)\subset\Ch$ of the loop $\gamma$ on $C$ is the disjoint union of $n$ loops 
 $\gamma_m,\;m=0,\ldots,n-1$ (we enumerate them in such a way that $\gamma_{m+1}=\sigma(\gamma_m)$, assuming that $\gamma_{n} = \gamma_0$).
Note that the union of $\gamma_m,\;m=0,\ldots,n-1,$ is homologically trivial on $\Ch$.
Consider also a simple loop $\eta_0$ on $\Ch$  such that $\gamma_0\circ \eta_0=1$, $\gamma_1\circ \eta_0=-1$, and $\gamma_k\circ \eta_0=0$ for $k=2,\ldots,n-1$,
where $\circ$ denotes the intersection pairing of 1-cycles on $\Ch$.

Introduce the loops $\eta_m=\sigma^m(\eta_0),\;m=1,\ldots,n-1$, and consider the cycles
\be
\a=\sum_{m=1}^{n-1} (\rho^{-m}-1) \gamma_m\;,\quad
\b=\f{1}{\rho-1}\sum_{m=1}^{n-1} (1-\rho^{-m}) \eta_m 
\ee
(for homology classes in $H_1(\Ch,\Z)$ represented by $\gamma_m$ and $\eta_m$ we use the same notation); clearly, $\a,\b\in \Hc_1$. 
The class $\beta^*\in \Hc_{n-1}$ dual to $\b$ is given by
\be
\tilde{\a}=\f{1}{n}\sum_{m=1}^{n-1} (\rho^m-1) \gamma_m\,.
\ee

Introduce the following homological coordinates:
\be
\Pcal_\a= \int_{\a} v= n \int_{\gamma_0} v  \;,\quad
\Pcal_\b=\int_{\b} v= \f{n}{1-\rho}\int_{\eta_0} v\;.
\ee
Without loss of generality we may assume that while $C$ degenerates to $C_0$ all homological coordinates except $\Pcal_\b$ remain finite.
 
\begin{lemma}\la{locparDj}
A local parameter tranversal to $D_j\subset\Mgno$ can be chosen as
\be
t_j=e^{2\pi \sqrt{-1} \Pcal_\b/\Pcal_\a}\;.
\la{tj}
\ee
\end{lemma}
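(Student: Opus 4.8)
The plan is to argue exactly as in the case of $D_0$: identify the period $\Pcal_\a$ as a quantity that stays finite (and nonzero) while $C$ degenerates to $C_0$, and identify $\Pcal_\b$ as the period whose imaginary part blows up like $-\tfrac{1}{2\pi}\log|t|$, where $t$ is the plumbing parameter of the standard family smoothing the separating node. Concretely, one starts from a plumbing construction of the family $C \to \Delta$ degenerating to $C_0 = C_1 \cup_{x_0 \sim y_0} C_2$: near the node $C$ looks like $\{uv = t\}$, and $t$ is a transversal coordinate to $D_j$ in the moduli space of curves, hence (after pulling back) to $D_j \subset \Mgno$, since the $n$-differential $\nd$ stays away from the vanishing cycle and deforms holomorphically. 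The preimage $f^{-1}(\gamma)$ consists of the $n$ loops $\gamma_m$, each of which is a vanishing cycle on $\Ch$ for one of the $n$ nodes of $\Ch_0$, and the corresponding plumbing parameters $t^{(m)}$ on $\Ch$ are all equal to a common value determined by $t$ (they are Galois-conjugate and the covering is cyclic, totally ramified elsewhere, unramified at the node).

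The key computation is the standard one for periods of a normalized abelian differential of the third kind across a degenerating handle. Since $v$ has, in the limit, simple poles at the $x_0^{(m)}, y_0^{(m)}$ with residues differing by signs and scaling by $\rho^{-m}$, the period $\int_{\eta_0} v$ over a loop $\eta_0$ joining the two sides of the $m{=}0$ (and $m{=}1$) handle behaves, as the handle pinches, like $\big(\mathrm{res}_{x_0^{(0)}} v\big)\log t^{(0)} + O(1)$. Summing against the coefficients $(1-\rho^{-m})/(\rho-1)$ that define $\b$, and using $\Pcal_\a = n\int_{\gamma_0} v$ together with the residue relations $\mathrm{res}_{x_0^{(m)}} v_1 = \rho^{-m}\,\mathrm{res}_{x_0^{(0)}} v_1$, one gets $\Pcal_\b/\Pcal_\a = \tfrac{1}{2\pi\sqrt{-1}}\log t + O(1)$ for a suitable normalization of $t$, and $\Pcal_\a$ converges to a finite nonzero limit (it equals $n$ times the residue of $v_0$ at $x_0^{(0)}$ up to a $2\pi\sqrt{-1}$, which is nonzero as long as the limiting $n$-differential genuinely has a pole of order $n$ at the node, i.e.\ on the open dense locus $\widetilde D_j$). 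Hence $t_j := e^{2\pi\sqrt{-1}\,\Pcal_\b/\Pcal_\a}$ differs from $t$ by a nonvanishing holomorphic factor, so it is itself a transversal coordinate to $D_j$.

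Two points need care, and these are the main obstacles. First, the identification of $\Pcal_\a$ as the residue-type quantity and the claim that it stays finite and nonzero require knowing precisely how $v = \nd^{1/n}$ behaves under the degeneration; this is where one invokes that $\nd$ degenerates to $(\nd_1,\nd_2)$ with order-$n$ poles whose $n$-residues match up to $(-1)^n$, so that $v_1, v_2$ have genuine simple poles (not a higher-order vanishing of the residue) — exactly the content encoded in the definition of $\widetilde D_j$. Second, one must check that the particular $\Z/n\Z$-equivariant combinations $\a$ and $\b$ are set up so that the intersection and residue bookkeeping collapses to the clean formula above; this is a finite linear-algebra check with the roots of unity $\rho^m$, entirely parallel to the $D_0$ computation already carried out, and is best dispatched by citing that computation and the analogous Lemma~8 of~\cite{Contemp}.

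I expect the residue/plumbing asymptotics to be the genuine technical heart, but since the same argument was executed for $D_0$ in the preceding subsection (and, for the quadratic-differential case, in~\cite{Contemp}), the proof can be written compactly: set up the plumbing family, quote the third-kind-period asymptotics, perform the root-of-unity bookkeeping, and conclude that $t_j$ and the plumbing parameter agree up to a unit.
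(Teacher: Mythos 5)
Your argument is correct and is essentially the paper's: both proofs identify $\Pcal_\b/\Pcal_\a$ with the (complex) modulus of the pinching annulus — the ratio of the $\eta_0$-crossing period to the waist period $\int_{\gamma_0}v$, with the root-of-unity coefficients in $\b$ arranged so that the two handle crossings of $\eta_0$ combine to a single factor — so that $e^{2\pi\sqrt{-1}\,\Pcal_\b/\Pcal_\a}$ agrees with a transversal (plumbing) parameter up to a nonvanishing factor. The only difference is presentational: the paper isolates the degenerating cylinder via the hyperbolic collar of $\gamma_0$ and reads off the ``complex height over complex waist'' heuristically, whereas you compare directly with the plumbing coordinate $t$ via the logarithmic asymptotics of third-kind periods, which if anything makes the nonvanishing-holomorphic-unit step more explicit.
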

\begin{proof} We can realize the loops $\gamma_m$ and $\eta_m$ by simple closed geodesics in hyperbolic metric on $\Ch$. Denote by $T_i$ 
the maximal hyperbolic cylinder with waist $\gamma_i$ (the collar of $\gamma_i$). Put $\eta_0^{(i)}=\eta_0\cap T_i,\;i=0,1$. Then $\f{1}{n}\Pcal_\b=\f{1}{1-\rho}\int_{\eta_0}v\sim \int_{\eta_0^{(0)}}v$ when $C$ approaches $C_0$ is the
``complex heght" of the cylinder $T_0$ while $\f{1}{n}\Pcal_\a=\int_{\gamma_0} v$ is its ``complex waist". Therefore, (\ref{tj}) gives a local coordinate transversal to $D_j$.
\end{proof}

To find the asymptotics of $\tau$ when $t_j\to 0$, consider
\begin{align}\la{deryauj}
\f{\p\log\tau}{\p \Pcal_\b}&= -\f{1}{12\pi \sqrt{-1}n}\int_{{\b}^*}\f{S_B-S_v}{v}\\\nonumber &=-\f{1}{12\pi \sqrt{-1}n}\int_{\gamma_0} \f{S_B-S_v}{v}
\longrightarrow -\f{1}{6n} {\rm res}|_{x_0^{(0)}} \f{S_{B_0}-S_{v_0}}{v_0}\;.
\end{align}
Pick a coordinate $\zeta$ near $x_0^{(0)}$ such that $S_{B_0}=0$, then $v_0=\f{\Pcal_\a}{2\pi \sqrt{-1}n} \f{d\zeta}{\zeta}+O(1)$
and $\f{S_{v_0}}{v_0}=\f{\pi \sqrt{-1} n}{\Pcal_\a}\f{d\zeta}{\zeta}+O(1)$ as $t_j\to 0$.
Therefore, (\ref{deryauj}) implies 
$$
\frac{\partial\log\tau}{\partial \Pcal_\b}\Big|_{t_j=0} =\frac{\pi \sqrt{-1}}{6\Pcal_\a}\;.
$$
Thus, $\tau\sim e^{\pi \sqrt{-1}\Pcal_\b/6\Pcal_\a}$ as $t_j\to 0$, 
and
\be\label{asj}
\tau = t_j^{1/12}(const+ o(1))\;.
\ee

\subsection{Hodge class on $P\Mgno$}

A straightforward combination of Theorem \ref{holo} with asymptotic formulas \eqref{astaudeg}, \eqref{asympD0} and \eqref{asj} yields

\begin{theorem}\label{th:deg} {\rm (Formula~\eqref{Hodgeint} of Theorem~\ref{mainth})}
The Hodge class $\lambda$ on $\Mgno$ is a linear combination of the tautological class $\psi$ and the classes
of boundary divisors $D_{{\rm deg}}$ and $D_j,\;j=0,1,\ldots,[ g/2 ],$ as follows:
\be
\l=\f{(g-1)(2n+1)}{6n(n+1)}\psi +\f{1}{12n(n+1)} \delta_{{\rm deg}}+\f{1}{12}\sum_{j=0}^{[ g/2 ]}\delta_j\,.
\la{LMbar}
\ee
\end{theorem}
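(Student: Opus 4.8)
The plan is to identify the divisor of the canonical section $\tau^{48n(n+1)}$ of the line bundle $\mathcal L:=\lambda^{48n(n+1)}\otimes L^{-8(g-1)(2n+1)}$ on $P\Mgno$ and then solve for $\lambda$ in the basis $\lambda,\psi,\delta_0,\dots,\delta_{[g/2]}$ provided by Lemma \ref{pic}. Everything substantial has already been established: all that remains is to assemble Theorem \ref{holo} with the three boundary asymptotics.

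First I would recall, from the proof of Property 2 in Proposition \ref{pr:can}, that up to a closed substack of codimension at least $2$ the open stratum $\Mgn[\mathbf 1]$ is the complement in $P\Mgno$ of the divisors $D_{\rm deg}$ and $D_j$, $0\le j\le[g/2]$. Since the rational Picard group does not see loci of codimension $\ge 2$, any rational section $s$ of a line bundle $\mathcal L$ on $P\Mgno$ whose restriction to $\Mgn[\mathbf 1]$ is a holomorphic section with no zeros satisfies
\[
c_1(\mathcal L)=\bigl(\operatorname{ord}_{D_{\rm deg}}s\bigr)\,\delta_{\rm deg}+\sum_{j=0}^{[g/2]}\bigl(\operatorname{ord}_{D_j}s\bigr)\,\delta_j ,
\]
where the orders are computed with respect to transversal local coordinates.

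Next I would apply this to $\mathcal L=\lambda^{48n(n+1)}\otimes L^{-8(g-1)(2n+1)}$ and $s=\tau^{48n(n+1)}$. Theorem \ref{holo} says that $s$ is a nowhere-vanishing holomorphic section of $\mathcal L|_{\Mgn[\mathbf 1]}$, hence a rational section of $\mathcal L$ on $P\Mgno$; and formulas \eqref{astaudeg}, \eqref{asympD0}, \eqref{asj} express $\tau$ near each boundary divisor as a positive power of the corresponding transversal coordinate times a finite nonzero factor, so that $s$ is in fact holomorphic there, with
\[
\operatorname{ord}_{D_{\rm deg}}\tau^{48n(n+1)}=48n(n+1)\cdot\frac{1}{12n(n+1)}=4,\qquad
\operatorname{ord}_{D_j}\tau^{48n(n+1)}=48n(n+1)\cdot\frac{1}{12}=4n(n+1).
\]
Since $c_1(\mathcal L)=48n(n+1)\,\lambda-8(g-1)(2n+1)\,\psi$, this gives
\[
48n(n+1)\,\lambda-8(g-1)(2n+1)\,\psi=4\,\delta_{\rm deg}+4n(n+1)\sum_{j=0}^{[g/2]}\delta_j ;
\]
dividing by $48n(n+1)$ yields \eqref{LMbar}, and multiplying that identity by $12n(n+1)$ and rearranging yields \eqref{Hodgeint}.

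The one step that is not purely mechanical is the case $(g,n)=(2,2)$: there, by Definition \ref{Def:degenerate}, $D_{\rm deg}$ is the \emph{weighted} divisor $P\Mgno[2,1,1]+2\,P\Mgno[2,2]$, and one must check that the asymptotics \eqref{astaudeg} are read off in a way compatible with the multiplicity $2$ along the component $P\Mgno[2,2]$ of squares of holomorphic differentials. This component is special because there the canonical double cover $\Ch$ degenerates — into two copies of $C$ glued at two nodes — rather than remaining irreducible, and this degeneration is exactly what produces the factor $2$ in Definition \ref{Def:degenerate} (cf.\ the remark following that definition, and the parallel analysis for quadratic differentials in \cite{Contemp}). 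With this bookkeeping in place the computation above applies verbatim; for $(g,n)\ne(2,2)$, where $D_{\rm deg}$ is generically reduced and irreducible, it is immediate. I expect this $(2,2)$ bookkeeping, rather than any new idea, to be the only real obstacle — the genuine work lies in Theorem \ref{holo} and in the boundary asymptotics, which we are allowed to use.
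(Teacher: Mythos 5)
Your proposal is correct and follows essentially the same route as the paper: Theorem~\ref{holo} provides $\tau^{48n(n+1)}$ as a nowhere-vanishing holomorphic section of $\lambda^{48n(n+1)}\otimes L^{-8(g-1)(2n+1)}$ on the open stratum, and the asymptotics \eqref{astaudeg}, \eqref{asympD0}, \eqref{asj} give the vanishing orders $4$ along $D_{\rm deg}$ and $4n(n+1)$ along each $D_j$, from which \eqref{LMbar} follows exactly as you compute. Your extra care about the multiplicity-$2$ component in the case $(g,n)=(2,2)$ is a sensible bookkeeping remark but does not change the argument.
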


\section{An alternative computation of $\delta_{\rm  deg}$}\label{sec:alternative}

An alternative proof of Theorem~\ref{th:deg} was given in \cite{Zun} using an approach proposed by D. Zvonkine \cite{Zvon} and developed in~\cite{Sau}.

Let $g,n\geq2$. In order to compute the class $\delta_{\rm deg}$ in $\Pic(P\Mgno)$ we begin with marking one point on $C$, {\em i.e.}
we study the space $P\overline{\mathfrak{M}}^{(n)}_{g,1}$. In $P\overline{\mathfrak{M}}^{(n)}_{g,1}$ we define the loci
$$
Z_i=\{(C,x_1,\nd) \;  | \; 
\text{$C$ is smooth and $x_1$ is a zero of $\nd$ of order at least $i$}\}.
$$
We denote by $\overline{Z}_i$ the closure of $Z_i$. This is a closed substack of $P\overline{\mathfrak{M}}^{(n)}_{g,1}$ of pure codimension~$i$ for $1 \leq i < N=(2g-2)n$, while for $i =N$ it has two components of codimensions $N-1$ and $N$ respectively, cf Section~\ref{Ssec:strata}.

Let $\pi: P\overline{\mathfrak{M}}^{(n)}_{g,1}\to P \Mgno$ be the forgetful map of the marked point. 
Then it is easy to see that the image of $\overline{Z}_2$ under $\pi$ is the divisor~$D_{\rm deg}$. This statement takes into account the multiplicities of the components of $D_{\rm deg}$. Indeed, if $(g,n)\neq (2,2)$ then the restriction of $\pi$ to $Z_2$ is of degree~$1$ and if $(g,n)=(2,2)$ then $\pi$ is of degree one onto $P\Mgno(2,1,1)$ and two onto $P\Mgno(2,2)$. Therefore $\pi_*[\overline{Z}_2]=\delta_{\rm deg}$. Thus to find an expression of $\delta_{\rm deg}$ it suffices to compute the class $[\overline{Z}_2]\in A^2(P\overline{\mathfrak{M}}^{(n)}_{g,1})$. 
\bigskip

\noindent{\bf Computation of $[\overline{Z}_2]$.} Let $\mathcal{L}_1\to \Mb_{g,1}$ be the line bundle whose fiber is the cotangent line to the curve $C$
at $x_1$, and put $\psi_1=c_1(\mathcal{L}_1)$. 
Consider the following line bundle over $P\overline{\mathfrak{M}}^{(n)}_{g,1}$:
\begin{equation*}
\mathcal{O}(1)\otimes  \mathcal{L}_1^{\otimes n} \simeq {\rm{Hom}}\left(L, \mathcal{L}_1^{\otimes n}\right).
\end{equation*}
This line bundle has a natural section 
\begin{equation*}
s_1: (C,\nd) \mapsto \nd(x_1).
\end{equation*} 
In other words, $s_1$ is the 
evaluation of $\nd$ at the marked point. The class of the vanishing locus of $s_1$ is given by the first Chern class of the line bundle:
\begin{equation*}
\{s_1= 0\} \; = \; c_1\left(\mathcal{O}(1)\otimes  \mathcal{L}_1^{\otimes n}\right) \; = \; -\psi + n \psi_1.
\end{equation*}
It is easy to see that $\overline{Z}_1$ is a component of the vanishing locus $\{ s_1 =0 \}$. In the next section we will show that the vanishing locus has no other components and that the vanishing order of $s_1$ along $\overline{Z}_1$ is equal to~1. Thus we have $[\overline{Z}_1]=-\psi+n\psi_1$. 

Now we restrict to $\overline{Z}_1$ and study the line bundle  $\mathcal{O}(1)\otimes \mathcal{L}_1^{\otimes n+1}$. This line bundle has a natural section
\begin{equation*}
s_2 : (C, \nd) \mapsto \nd'(x_1).
\end{equation*}
In other words, assuming that $\nd$ vanishes at $x_1$, the section $s_2$ assigns to $\nd$ its derivative at~$x_1$. It is easy to see that $\overline{Z}_2$ is a component of the vanishing locus $\{ s_2 =0 \}$. In the next section we will show that the vanishing locus has no other components and that the vanishing order of $s_2$ along $\overline{Z}_2$ is equal to~1. Thus we have 
$$
[\overline{Z}_2 ]=(-\psi+(n+1)\psi_1)[\overline{Z}_1]=(-\psi+n\psi_1)(-\psi+(n+1)\psi_1).
$$ 

Recall that $\delta_{\rm deg}$ is the push-forward by $\pi$ of this expression. To compute this push-forward we use 
\begin{itemize}
\item $\psi$ is a pull-back under $\pi$;
\item $\pi_* \psi_1 = 2g-2$;
\item $\pi_* \psi_1^2 = \kappa_1 = 12 \lambda_1 - \sum_{i=1}^{[ g/2]} \delta_i$.
\end{itemize}
The last equality is the well-known Mumford's formula. 

Applying these equalities we get
\begin{eqnarray*}
\delta_{\rm deg} &=& \pi_*\left((-\psi+n \psi_1)(-\psi+(n+1)\psi_1) \right)\\
&=& n(n+1)\pi_*(\psi^2_1) - (2n+1)  \pi_*(\psi_1)\psi\\
& = & 12 n(n+1) \lambda_1 - n(n+1) \sum_{i=1}^{[ g/2]} \delta_i - (2n+1)(2g-2) \psi.
\end{eqnarray*}
This coincides with the expression of Theorem~\ref{th:deg}.
\bigskip

 In order to complete the proof of Theorem~\ref{th:deg}, it remains to prove that the vanishing locus of $s_1$ (respectively $s_2$) is exactly $\overline{Z}_1$ (respectively $\overline{Z}_2$) and that the vanishing order of $s_1$ and $s_2$ is $1$.

\noindent{\bf Vanishing loci of $s_1$ and $s_2$.}
Let $W$ be an irreducible divisor of $P\overline{\mathfrak{M}}^{(n)}_{g,1}$ in the vanishing locus of $s_1$. Let $k$ be the number of nodes of the curve represented by a generic point of $W$.  The vanishing locus of $s_1$ is of codimension 1 in $P\overline{\mathfrak{M}}^{(n)}_{g,1}$ thus $k=0$ or $1$. We investigate both cases.
\begin{itemize}
\item Let $k=0$. Then a dense subset of $W$ is contained in $Z_1$ and thus $W$ is a component of $\overline{Z}_1$.
\item  Let $k=1$. Then the divisor $W$ is contained in ${\tilde{\nu}}^{-1}(D)$ for some irreducible boundary divisor $D$ of the moduli space of stable curves with one marked point (we recall that ${\tilde{\nu}}: P\overline{\mathfrak{M}}^{(n)}_{g,1}\to \Mb_{g,1}$ is the forgetful map). Since the $D$ is irreducible, and $\tilde{\nu}$ is the projectivization of a vector bundle, we necessarily have $W={\tilde{\nu}}^{-1}(D)$. However there exists an $n$-differential in ${\tilde{\nu}}^{-1}(D)$ which is not identically zero on the component of marked point. Therefore, there exists a point in ${\tilde{\nu}}^{-1}(D)$ which is not in the vanishing locus of $s_1$. Thus the case $k=1$ does not occur.
\end{itemize}

To study the vanishing locus of $s_2$ we follow the same strategy. First we can check by dimension count that no irreducible component of $Z_1$ is in the zero locus of $s_2$. Now let $W$ be an irreducible divisor in the vanishing locus of $s_2$ and let $k$ be the number of nodes of the curve represented by a generic point of $W$. We have now 3 cases to study: $k=0,1$ and~$2$.
\begin{itemize}
\item Let $k=0$. Then a dense subset of $W$ is contained in $Z_2$ and thus $W$ is a component of $\overline{Z}_2$.
\item Let $k=2$. Then $W={\tilde{\nu}}^{-1}(D)$ where $D$ is a codimension 2 boundary stratum of $\Mb_{g,1}$. As above ${\tilde{\nu}}^{-1}(D)$ is not contained in the vanishing locus of $s_2$. Thus the case $k=2$ cannot occur.
\item Let $k=1$. Then W is a co-dimension 1 locus inside ${\tilde{\nu}}^{-1}(D)$ for a boundary divisor $D$ of $\Mb_{g,1}$. The generic curve has two components of genera $g'$ and $g-g'$ with $1\leq g'\leq g-1$. We assume that the marked point is carried by the component of genus $g'$. The rank of the bundle of $n$-differentials with a pole of order at most $n$ at the node is $n(2g'-2+1)>1$. Thus the divisor $D$ is not contained in the locus of differentials that vanish identically on this component. Thus $D$ is not contained in the vanishing locus of $s_2$.
\end{itemize}

We conclude that $\{s_i=0\}=\overline{Z}_i$ for $i=1$ and $2$.
\bigskip

\noindent{\bf Vanishing order of $s_1$.}  Let $y_0=(C_0,\nd_0,x_0)$ be a point in $Z_1$. We recall that $P\overline{\mathfrak{M}}^{(n)}_{g,1}\to P\Mgno$ is isomorphic the universal curve. Thus a neighborhood of $y_0$ in $Z_1$ is given by $U\times \Delta$ where $U$ is a neighborhood of $(C_0, \nd_0)$ in $P\Mgn[\mathbf{1}]$ and $\zeta \in \Delta$ is the distinguished parameter around  $x_0$ in $C_0$ (cf Section~\ref{sec:dist}). Let $(C,\nd,x)$ be an $n$-differential in $U\times \Delta$. In coordinates $(u,\zeta)\in U\times \Delta$, the differential $\nd$ is given by $\nd=\zeta d\zeta^{n}$, the locus $Z_1$ is $\{ \zeta=0 \}$ and the section $s_1$ is given by $s_1(u,\zeta)=\zeta$. Therefore the vanishing order of $s_1$ along $Z_1$ is~$1$.
\bigskip

\noindent{\bf Vanishing order of $s_2$.} Let $y_0=(C_0,\nd_0,x_0)$ be a point in $Z_2$. A neighborhood of $y_0$ in $P\overline{\mathfrak{M}}^{(n)}_{g,1}$ is now given by $U\times \Delta\times \Delta'$ where $U$ is a neighborhood of $y_0$ in $Z_2$ and $\Delta$ and $\Delta'$ are disks of the complex plane centered at 0 and parametrized by $\zeta$ and $a$ such that:
$$
\nd=(\zeta^2+a) d \zeta^n.
$$
With the parameters $(u,\zeta,a) \in U\times \Delta\times \Delta'$, the locus $Z_1$ is defined by $\zeta^2+a=0$. Moreover with these parameters, the section $s_2$ is given by $s_2(u,\zeta,a)=a$.  Thus the vanishing order of $s_2$ along $Z_2$ is again $1$.

\section{Prym-Tyurin differentials on $\Ch$ and holomorphic $n$-differentials on $C$}
\label{PTND}

Here we relate Prym-Tyurin vector bundles to vector bundles of holomorphic $k$-differentials on the base Riemann surface $C$. We use this relation to finish the proof of Theorem~\ref{mainth}. 

We also prove that the Prym-Tyurin bundle is not a pullback from $P\Mgno$ in general.

\subsection{Prym-Tyurin bundles and $n$-differentials}

Consider two vector bundles $\Lambda^{(k)}$ and $\tilde{\nu}^* \Omega_g^{(n-k+1)}$ over $X(g,n)$. The fiber of $\Lambda^{(k)}$ is the $k$th eigenspace in the space of abelian differentials on~$\Ch$. The fiber of $\Omega_{n-k+1}$ is the space of $(n-k+1)$-differentials on~$C$. There are natural morphisms:
\be
\label{isophi}
\begin{array}{ccccc}
\Phi_0  : &   \Lambda^{(0)}  & \to &  \tilde{\nu}^* \Omega_g \, , \qquad  \ & \\
\Phi_k  : &   \Lambda^{(k)} \otimes T^{\otimes (n-k)}  & \to &    \tilde{\nu}^* \Omega_g^{(n-k+1)} & \mbox{ for } 1 \leq k \leq n. \\
\end{array}
\ee
Indeed, let $(C,\nd)$ be a point in $U\setminus D_{\rm deg}$ and let $q$ be a differential in the fiber of $\Lambda^{(k)}$. The $n-k+1$ differential $qv^{n-k}$ is invariant under the action of the automorphism group of the covering, thus $qv^{n-k}$ is the pull-back of $n-k+1$ differential on $C$. For $k=0$ the differential $q$ is already invariant under the action of the automorphism group of the covering, so there is no need to multiply it by a power of~$v$.

\begin{lemma}\la{corresp}
The map $\Phi_k$ is an isomorphism over $V \setminus D_{\rm deg}$.
\end{lemma}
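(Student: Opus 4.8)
The plan is to verify that $\Phi_k$ is an isomorphism of vector bundles over $V\setminus D_{\rm deg}$ by a fiberwise dimension count combined with a fiberwise injectivity argument. Since both sides are known to be vector bundles (the left side because $\Lambda^{(k)}$ is a bundle by the remark in Section~\ref{introfirst}, twisted by the line bundle $T$, and the right side by the standard fact that $\Omega_g^{(n-k+1)}$ is a bundle for $n-k+1\geq 2$), it suffices to check that $\Phi_k$ is injective on each fiber; equality of ranks then upgrades injectivity to an isomorphism. The ranks match by \eqref{dimOk} and \eqref{dimHk}: indeed $\rk\Lambda^{(k)}=(2n-2k+1)(g-1)=\rk\Omega_g^{(n-k+1)}$, so the two sides have the same rank and it is enough to prove injectivity.

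First I would reduce to a single smooth fiber: over $V\setminus D_{\rm deg}$ the relevant loci are $P\Mgn[\mathbf 1]$ together with the open boundary pieces $\widetilde D_i$, so a point is a curve $C$ (smooth, or nodal with $n$-residue conditions at the node) equipped with an $n$-differential $\nd$ with simple zeros and the canonical covering $f:\Ch\to C$ with its abelian differential $v$ satisfying $v^n=f^*\nd$. Over the smooth locus $P\Mgn[\mathbf 1]$ the map $\Phi_k$ sends $q\otimes v^{n-k}$ (with $q$ in the $\rho^k$-eigenspace $\Lambda^{(k)}\subset H^0(\Ch,\omega_{\Ch})$) to $qv^{n-k}$, which is $\sigma$-invariant and hence descends to an $(n-k+1)$-differential on $C$. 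To see injectivity, suppose $qv^{n-k}=0$ as a section downstairs; since $v$ is a nonzero abelian differential on the \emph{irreducible} curve $\Ch$ (irreducibility of $\Ch$ for $(C,\nd)$ generic, and for $\widetilde D_i$ one works componentwise), multiplication by $v^{n-k}$ is injective on sections, forcing $q=0$. Surjectivity then follows from the rank equality, but one can also argue it directly: given an $(n-k+1)$-differential $\eta$ on $C$, the ratio $f^*\eta / v^{n-k}$ is a meromorphic section of $\omega_{\Ch}$ whose polar divisor is controlled by $(v)=n\xh_1+\dots+n\xh_N$ and the zero divisor of $f^*\eta$; one checks using the order of vanishing of $\eta$ at the simple zeros $x_i$ of $\nd$ (where $\eta$ has some nonnegative order, and $f$ is totally ramified) that $f^*\eta/v^{n-k}$ is in fact holomorphic, and that it lies in the $\rho^k$-eigenspace because $f^*v=\rho v$ and $f^*\eta=\eta$. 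This gives a holomorphic preimage $q\otimes v^{n-k}$, hence surjectivity, hence the isomorphism.

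For the boundary loci $\widetilde D_i$ (with $i\geq 1$, or the irreducible case $i=0$) the same computation goes through once one tracks the node: on $\Ch$ the differential $v$ acquires simple poles at the $n$ preimages of the node and the $(n-k+1)$-differential $\eta$ acquires a pole of order at most $n-k+1$ there, and the $n$-residue matching condition built into the definition of $n$-differentials on stable curves is exactly what guarantees that $f^*\eta/v^{n-k}$ has at worst simple poles whose residues satisfy the gluing condition defining a section of the dualizing sheaf $\omega_{\Ch}$ — so the target of $\Phi_k$ should be read as $H^0(\Ch,\omega_{\Ch})$ computed with the dualizing sheaf of the nodal cover. The key bookkeeping is: pole of order $n-k+1$ in $\eta$, divided by a pole of order $n(n-k)$ coming from $v^{n-k}$... wait, $v$ has a \emph{simple} pole at the node preimages, so $v^{n-k}$ has a pole of order $n-k$, and $f^*\eta$ has a pole of order $n-k+1$ (since $f$ is unramified at the node by the admissible covering condition), giving $f^*\eta/v^{n-k}$ a simple pole — precisely a section of the dualizing sheaf. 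I expect the main obstacle to be this boundary analysis: one must carefully match the $n$-residue condition in Definition~\ref{Def:degenerate}'s ambient setup against the residue conditions defining $H^0$ of the dualizing sheaf on the nodal curve $\Ch$, including the sign $(-1)^n$ and the cyclic $\rho$-twisting of residues at the $n$ node preimages, and verify that the eigenspace decomposition is preserved. The smooth-locus part is essentially formal once one knows the ranks agree.
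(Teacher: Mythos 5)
Your injectivity step coincides with the paper's (over $V\setminus D_{\rm deg}$ the canonical differential $v$ does not vanish identically on any component of $\Ch$, so multiplication by $v^{n-k}$ is injective on sections), but your primary route to bijectivity is circular as written: you take the rank equality $\rk\Lambda^{(k)}=(2n-2k+1)(g-1)=\rk\,\Omega_g^{(n-k+1)}$ from \eqref{dimOk} (and \eqref{dimHk}, which is derived from it), whereas in the paper \eqref{dimOk} is explicitly a \emph{consequence} of the fact that $\Phi_k$ is an isomorphism, and the individual ranks of the eigenbundles $\Lambda^{(k)}$ are not established anywhere upstream of Lemma~\ref{corresp}. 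A blind proof must either compute $\rk\Lambda^{(k)}$ independently (e.g.\ by a Chevalley--Weil type argument, which you do not supply) or avoid the individual ranks altogether. The paper does the latter by a small trick you missed: it proves fiberwise injectivity of $\Phi_k$ for \emph{all} $k$ simultaneously and then compares total ranks, $\sum_{k=0}^{n-1}\rk\Lambda^{(k)}=\rk\,\Omega_{\gh}=\gh=n^2(g-1)+1$ by \eqref{decompo}, while the targets give $g+\sum_{k=1}^{n-1}(2n-2k+1)(g-1)=n^2(g-1)+1$; since each injection gives an inequality of ranks, equality of the sums forces every $\Phi_k$ to be fiberwise bijective, and the individual ranks then come out as a corollary rather than an input.

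Your fallback direct surjectivity argument does repair the circularity on the open stratum: for $\eta\in H^0(C,\omega_C^{n-k+1})$ the ratio $f^*\eta/v^{n-k}$ has order $(n-1)(n-k+1)-n(n-k)=k-1\geq 0$ plus the vanishing order of $\eta$ at each ramification point $\xh_i$, is holomorphic elsewhere since $v$ is nonvanishing there, and lies in the $\rho^k$-eigenspace, so it is a genuine preimage. But on the boundary pieces $\widetilde{D}_i$ you only sketch the bookkeeping and yourself flag the residue matching at the $n$ node preimages (with the $(-1)^n$ sign and the $\rho$-twisting) as an unresolved obstacle; to make that route complete you would still have to verify that $f^*\eta/v^{n-k}$ defines a section of the dualizing sheaf of $\Ch$ in the correct eigenspace. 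So either finish that verification or replace the borrowed rank formula by the summation argument above; as it stands the proposal has a genuine gap at exactly the point where the paper's proof does its (short) work.
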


\begin{proof}
The maps $\Phi_k$  for $0 \leq k \leq n-1$ are injective because $v$ does not vanish identically on any component of the nodal curve $\Ch$. They are bijective because
the sum of ranks of the Prym-Tyurin bundles $\Lambda^{(k)}$ for $0 \leq k \leq n-1$ is equal to the sum of ranks of the vector bundles $\Omega_{n-k+1}$. Indeed,
$$
\sum_{k=0}^{n-1} \rk \Lambda^{(k)} = \rk \Omega_{\gh} = n^2(g-1)+1
$$
and
$$
\rk \Omega_g + \sum_{k=1}^{n-1} \rk \Omega_g^{(n-k+1)} = 
g + \sum_{k=1}^{n-1} (2n-2k+1)(g-1) = n^2(g-1)+1.
$$
\end{proof}

\begin{corollary}
The rank of the Prym-Tyurin bundle is $g$ for $k=0$ and $(2n-2k+1)(g-1)$ for $1 \leq k \leq n-1$.
\end{corollary}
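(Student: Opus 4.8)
\emph{Approach.} This corollary is engineered to drop out of Lemma~\ref{corresp} with essentially no extra work, so the plan is simply to compare ranks across the isomorphism $\Phi_k$. First I would recall that each $\Lambda^{(k)}$ is a genuine vector bundle (as observed right after its definition), so its rank is locally constant and may be computed over the dense open substack $V\setminus D_{\rm deg}$. Over that locus Lemma~\ref{corresp} provides isomorphisms $\Phi_k\colon \Lambda^{(k)}\otimes T^{\otimes(n-k)}\xrightarrow{\sim}\tilde\nu^*\Omega_g^{(n-k+1)}$ for $1\le k\le n-1$ and $\Phi_0\colon\Lambda^{(0)}\xrightarrow{\sim}\tilde\nu^*\Omega_g$. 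Since tensoring with the line bundle $T$ and pulling back along $\tilde\nu$ both leave the rank unchanged, this yields $\rk\Lambda^{(k)}=\rk\Omega_g^{(n-k+1)}$ for $1\le k\le n-1$ and $\rk\Lambda^{(0)}=\rk\Omega_g=g$.

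\emph{Finishing the computation.} It then only remains to substitute the rank of the bundle of $m$-differentials recorded in the Introduction, namely $\rk\Omega_g^{(m)}=(2m-1)(g-1)$ for $m\ge 2$ and $\rk\Omega_g=g$ for $m=1$. Setting $m=n-k+1$, the range $1\le k\le n-1$ forces $2\le m\le n$, hence $\rk\Lambda^{(k)}=(2(n-k+1)-1)(g-1)=(2n-2k+1)(g-1)$, while $k=0$ gives $g$. This is exactly the claim.

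\emph{Where the difficulty lies.} Honestly there is no real obstacle at the level of the corollary itself: all the substance has already been discharged in Lemma~\ref{corresp}, whose proof in turn combines the injectivity of $\Phi_k$ (because $v$ vanishes identically on no component of $\Ch$) with the numerical identity $\sum_{k=0}^{n-1}\rk\Lambda^{(k)}=\rk\Omega_{\gh}=n^2(g-1)+1=\rk\Omega_g+\sum_{k=1}^{n-1}\rk\Omega_g^{(n-k+1)}$. The only points that want a little care are checking that $V\setminus D_{\rm deg}$ really is dense, so that the rank is read off on a legitimate open locus, and keeping the index shift $k\mapsto n-k+1$ together with the borderline values $k=0$ and $k=n-1$ straight when invoking the $m$-differential rank formula.
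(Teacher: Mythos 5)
Your proposal is correct and follows exactly the paper's intended route: the corollary is read off directly from the isomorphisms of Lemma~\ref{corresp} over $V\setminus D_{\rm deg}$ together with the rank formula $\rk\,\Omega_g^{(m)}=(2m-1)(g-1)$ for $m\geq 2$ (and $\rk\,\Omega_g=g$) stated in the Introduction, with the index shift $m=n-k+1$ handled correctly.
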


\begin{corollary}
In ${\rm Pic}(P \Mgno \setminus D_{\rm deg})$ we have
\be
\pt^{(k)}=\lambda_{n-k+1} -\f{g-1}{n}(n-k)(2n-2k+1)\psi
\la{PTL1}
\ee
for $1 \leq k \leq n-1$.
\end{corollary}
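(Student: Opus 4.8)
The plan is to obtain the identity by taking first Chern classes in the isomorphism of Lemma~\ref{corresp}, with essentially all of the care going into passing between the various open substacks.

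First I would record the needed identifications of Picard groups. By Proposition~\ref{pr:can} together with the degree computation of Lemma~\ref{P3}, the morphism ${\rm diff}$ restricts to an \emph{isomorphism of stacks} $V\setminus D_{\rm deg}\to U\setminus D_{\rm deg}$, and $U\setminus D_{\rm deg}$ is open in $P\Mgno\setminus D_{\rm deg}$ with complement of codimension at least $2$. Hence restriction yields isomorphisms
\[
{\rm Pic}\bigl(P\Mgno\setminus D_{\rm deg}\bigr)\;\cong\;{\rm Pic}\bigl(U\setminus D_{\rm deg}\bigr)\;\cong\;{\rm Pic}\bigl(V\setminus D_{\rm deg}\bigr),
\]
under which $\pt^{(k)}={\rm diff}_*c_1(\Lambda^{(k)})$ becomes simply $c_1(\Lambda^{(k)})$, while the pulled-back classes $\lambda_{n-k+1}$ and $\psi$ become $c_1(\tilde{\nu}^*\Omega_g^{(n-k+1)})$ and $c_1(L)$. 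So it will suffice to prove the corresponding equality in ${\rm Pic}(V\setminus D_{\rm deg})$.

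Next I would take determinants on both sides of the isomorphism $\Phi_k\colon\Lambda^{(k)}\otimes T^{\otimes(n-k)}\to\tilde{\nu}^*\Omega_g^{(n-k+1)}$ supplied by Lemma~\ref{corresp}. On the right this gives $c_1\bigl(\tilde{\nu}^*\Omega_g^{(n-k+1)}\bigr)=\lambda_{n-k+1}$ (note $n-k+1\ge 2$ since $1\le k\le n-1$). On the left, since tensoring by a line bundle multiplies the determinant,
\[
c_1\bigl(\Lambda^{(k)}\otimes T^{\otimes(n-k)}\bigr)=c_1(\Lambda^{(k)})+(n-k)\,\rk(\Lambda^{(k)})\,c_1(T).
\]
Here $\rk(\Lambda^{(k)})=(2n-2k+1)(g-1)$ by~\eqref{dimOk} (equivalently, the corollary just above), and $c_1(T)=\tfrac1n\psi$ because $T^{\otimes n}\simeq L$ by Lemma~\ref{P4}. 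Substituting and solving for $c_1(\Lambda^{(k)})=\pt^{(k)}$ gives exactly~\eqref{PTL1}.

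I expect the only genuine obstacle to be the stack-theoretic bookkeeping rather than the Chern-class computation: one must check that ${\rm diff}_*$ agrees, over $P\Mgno\setminus D_{\rm deg}$, with transport along the honest stack isomorphism ${\rm diff}\colon V\setminus D_{\rm deg}\to U\setminus D_{\rm deg}$ of Lemma~\ref{P3}, and that the codimension-$\ge 2$ complement permits extending the identity from $U\setminus D_{\rm deg}$ to all of $P\Mgno\setminus D_{\rm deg}$. Upgrading~\eqref{PTL1} to the full formula~\eqref{PTL} on $P\Mgno$, i.e. pinning down the $c_k\,\delta_{\rm deg}$ correction term, would instead require the analysis of the vanishing of $\det\Phi_k$ along $D_{\rm deg}$ carried out later in this section, and is not addressed here.
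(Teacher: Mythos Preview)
Your proposal is correct and follows essentially the same approach as the paper: take first Chern classes of the isomorphism $\Phi_k$ from Lemma~\ref{corresp}, use the rank formula $\rk\Lambda^{(k)}=(2n-2k+1)(g-1)$ and the relation $T^{\otimes n}\simeq L$, and invoke the codimension-$2$ complement to pass from $V\setminus D_{\rm deg}$ to $P\Mgno\setminus D_{\rm deg}$. You are somewhat more explicit than the paper about the stack-theoretic bookkeeping (the identification of Picard groups via Proposition~\ref{pr:can} and Lemma~\ref{P3}), but the substance of the argument is the same.
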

 
\begin{proof}
On the locus where $\Phi_k$ is an isomorphism we have
\begin{eqnarray*}
\pt^{(k)} &= &  c_1(\Lambda^{(k)})\\
&=& c_1(\tilde{\nu}^*\Omega_k \otimes T^{\otimes -(n-k)})\\ 
&=& \lambda_{n-k+1}- (g-1)(n-k)(2n-2k+1) c_1(T)\\
&=& \lambda_{n-k+1} -\f{g-1}{n}(n-k)(2n-2k+1)\psi,
\end{eqnarray*}
where the last equality is due to $T^{\otimes n}=L$.

The locus where $\Phi_k$ is an isomorphism coincides with $P \Mgno \setminus D_{\rm deg}$ up to codimension 2 substacks that are immaterial for the first Chern class computations. 
\end{proof}
 
To study the extension of the formula (\ref{PTL1}) to $P\Mgno$ we need to study the behavior of $\Phi_k$ along the boundary divisor $D_{\rm deg}$. The determinant of $\Phi_k$ is a global section of 
 $${\rm det}(\Lambda^{(k)} \otimes T^{n-k})^{-1}\otimes {\rm det}( \tilde{\nu}^* \Omega_g^{(n-k+1)}).$$  Thus the difference between $\pt^{(k)}$ and $\lambda_{n-k+1} -\f{g-1}{n}(n-k)(2n-2k+1)\psi$ is an effective divisor defined as the vanishing locus of ${\rm det}\,\Phi_k$.
 
In Section~\ref{sec:ext}, we have described a parametrization of  the cyclic coverings along $D_{\rm deg}$. We use here this parametrization to prove the following Lemma.
 
 \begin{lemma}
 \la{lemPT}
 If $[(n-1)/2]+1\leq k \leq n-1$ or $k=0$, then the morphism $\Phi_k$ is an isomorphism of vector bundles over $V\subset X(g,n)$. Otherwise, ${\rm det}(\Phi_k)$ vanishes along $D_{\rm deg}$ with order $(1-\frac{2k}{n})$.
 \end{lemma}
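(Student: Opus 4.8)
The plan is to study the degeneration of $\Phi_k$ over a neighbourhood of a generic point of $\widetilde{D}_{\rm deg}$ using the explicit description of the limiting cyclic covering from Section~\ref{sec:ext}, and then to read off the order of $\det\Phi_k$ by a weighted rescaling near the two colliding zeros. Since first Chern class computations are insensitive to loci of codimension $\geq 2$, it suffices to control $\Phi_k$ over $\widetilde{D}_{\rm deg}$; fix a generic point $(C_0,\nd_0)\in\widetilde{D}_{\rm deg}$ and let $\Phi_{k,0}$ denote the restriction of $\Phi_k$ to that point.

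First I would compute $\ker\Phi_{k,0}$. By Section~\ref{sec:ext} the covering degenerates to $\Ch_0=\Ch_1\cup\Ch_2$, where $\Ch_1$ is the superelliptic curve $y^n=(\zeta-\zeta_1)(\zeta-\zeta_2)$ of genus $[(n-1)/2]$, and the canonical differential degenerates to $v_0$, which equals $v_2$ on $\Ch_2$ and vanishes identically on $\Ch_1$. Since $\Phi_{k,0}(q_0)=(q_0|_{\Ch_2})\,v_2^{\,n-k}$ and $v_2\not\equiv 0$ on $\Ch_2$, an element $q_0\in\Lambda^{(k)}$ over this point lies in $\ker\Phi_{k,0}$ exactly when $q_0|_{\Ch_2}=0$; the matching-residue condition at the node(s) then forces $q_0|_{\Ch_1}$ to be a holomorphic differential on $\Ch_1$ in the $\rho^k$-eigenspace of $\sigma^*$. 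A direct computation with the eigendifferentials $\zeta^{a}\,y^{-(n-k)}\,d\zeta$ on $\Ch_1$ shows that this eigenspace is spanned by $d\zeta/y^{n-k}$ and is one-dimensional when $1\leq k\leq[(n-1)/2]$ and zero when $k=0$ or $[(n-1)/2]+1\leq k\leq n-1$ (consistently with $\textstyle\sum_k$ recovering $\gh_1=[(n-1)/2]$). Combined with Lemma~\ref{corresp}, this proves the first half of the statement: $\Phi_k$ is an isomorphism over all of $V$ precisely when $k=0$ or $[(n-1)/2]+1\leq k\leq n-1$.

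For $1\leq k\leq[(n-1)/2]$ I would compute the order of vanishing as follows. Choose a local frame $q_1^{(s)},\dots,q_{r-1}^{(s)},Q^{(s)}$ of $\Lambda^{(k)}$ near $\widetilde{D}_{\rm deg}$, where $r=\rk\Lambda^{(k)}$ and $s=\zeta_1-\zeta_2$ is the transverse parameter on $X(g,n)$ of Section~\ref{sec:ext}, arranged so that $q_1^{(0)},\dots,q_{r-1}^{(0)}$ span a complement of $\ker\Phi_{k,0}$ while $Q^{(0)}$ restricts on $\Ch_1$ to the superelliptic eigendifferential $dw/\tilde{y}^{\,n-k}$, written in a rescaled coordinate $w$ on the bubble in which the colliding zeros sit at fixed points $w_1\neq w_2$ with $(\zeta-\zeta_1)(\zeta-\zeta_2)=s^{2}(w-w_1)(w-w_2)$, $\tilde{y}^{\,n}=(w-w_1)(w-w_2)$. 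Tensoring with the frame $v_s^{\otimes(n-k)}$ of $T^{\otimes(n-k)}$ and with a local frame of $\tilde{\nu}^{*}\Omega_g^{(n-k+1)}$, one gets $\det\Phi_k=(\text{unit})\cdot\big\langle \Phi_k\big(Q^{(s)}\otimes v_s^{\otimes(n-k)}\big),\ \text{generator of }\operatorname{coker}\Phi_{k,0}\big\rangle$. Rewriting $dw/\tilde{y}^{\,n-k}$ in the distinguished coordinate $\zeta$ gives $Q^{(s)}\sim s^{\,1-2k/n}\,d\zeta/y_s^{\,n-k}$ near the colliding zeros, with $y_s^{\,n}=(\zeta-\zeta_1)(\zeta-\zeta_2)$, so that
\[
\Phi_k\big(Q^{(s)}\otimes v_s^{\otimes(n-k)}\big)=Q^{(s)}\cdot(y_s\,d\zeta)^{n-k}\ \sim\ s^{\,1-2k/n}(d\zeta)^{\,n-k+1}
\]
near the double-zero point $x_0$. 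On the other hand, a residue and order count at the node (using that $v_2$ vanishes to high order at the preimage(s) of $x_0$) shows that every differential in $\operatorname{im}\Phi_{k,0}$ vanishes at $x_0$; by the dimension count $\operatorname{im}\Phi_{k,0}$ is exactly the hyperplane of $(n-k+1)$-differentials vanishing at $x_0$, so the cokernel generator is evaluation at $x_0$. Hence $\det\Phi_k$ vanishes along $D_{\rm deg}$ to order $1-2k/n$. The non-integral exponent is harmless: the generic point of $D_{\rm deg}\subset X(g,n)$ carries a $\mathbb{Z}/2\mathbb{Z}$-stabilizer acting non-trivially on $Q^{(0)}$, and this is the same $\mathbb{Z}/2\mathbb{Z}$ that makes ${\rm diff}$ have degree $1/2$ over $D_{\rm deg}$ in Lemma~\ref{P3}.

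I expect the main difficulty to lie in the frame element $Q^{(s)}$: one has to produce a section of $\Lambda^{(k)}$ over the transverse disk with the prescribed $s\to 0$ limit \emph{and} with leading local profile $s^{\,1-2k/n}\,d\zeta/y_s^{\,n-k}$ near the colliding zeros, and to verify that the pairing against the cokernel generator does not vanish to higher order, i.e.\ that the bubble contribution is not cancelled by corrections to $Q^{(s)}$ coming from the rest of $\Ch_s$. The second delicate point is the transversality claim that $\operatorname{im}\Phi_{k,0}$ avoids the ``value at $x_0$'' direction, which relies on the exact ramification structure of the canonical covering at a double zero recorded in Section~\ref{sec:ext}.
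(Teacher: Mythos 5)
Your kernel/cokernel analysis is essentially the paper's own first step and is correct: over a generic point of $\widetilde{D}_{\rm deg}$ the fiber of $\Lambda^{(k)}$ splits according to the two components of $\Ch_0$, the kernel of $\Phi_{k,0}$ is the $\rho^k$-eigenspace of holomorphic differentials on the superelliptic bubble $\Ch_1$, spanned by $d\zeta/y^{n-k}$ and one-dimensional exactly for $1\leq k\leq[(n-1)/2]$ (your argument also covers the even-$n$ case $k=n/2$, where the only eigendifferential on $\Ch_1$ is of the third kind, consistent with the paper's separate discussion of that case), and the image is the hyperplane of $(n-k+1)$-differentials vanishing at the double zero $x_0$. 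Your exponent bookkeeping in the rescaled coordinate, $dw/\tilde{y}^{\,n-k}=s^{1-2k/n}\,d\zeta/y_s^{\,n-k}$, and your remark about the $\Z/2\Z$ stabilizer matching the degree-$1/2$ statement of Lemma~\ref{P3} are also in agreement with the paper.

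The gap is exactly where you flag it: the order computation rests on the existence of a local section $Q^{(s)}$ of $\Lambda^{(k)}$ over the transverse disk whose leading behaviour near the colliding zeros is precisely $s^{1-2k/n}\,d\zeta/y_s^{\,n-k}$, \emph{and} on the claim that the pairing of $\Phi_k\bigl(Q^{(s)}\otimes v_s^{\otimes(n-k)}\bigr)$ with the cokernel functional (evaluation at $x_0$) is not killed or modified by corrections coming from the rest of $\Ch_s$; as written these are stated as expected difficulties, not proved, so the exponent $1-2k/n$ remains a plausible guess rather than a conclusion. The paper closes this by an explicit choice that makes the image exact rather than asymptotic: take a local section $\tilde{q}_0^{(k)}$ of $\tilde{\nu}^*\Omega_g^{(n-k+1)}$ that is non-vanishing at the double zero and set $q_0^{(k)}=(\zeta_1-\zeta_2)^{1-2k/n}\,f^*\tilde{q}_0^{(k)}/v^{\,n-k}$, i.e.\ invert $\Phi_k$ on $\tilde{q}_0^{(k)}$ and rescale. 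Then $\Phi_k\bigl(q_0^{(k)}\otimes v^{\otimes(n-k)}\bigr)=(\zeta_1-\zeta_2)^{1-2k/n}\,\tilde{q}_0^{(k)}$ identically, with no error term, so transversality to the image hyperplane is immediate from $\tilde{q}_0^{(k)}(x_0)\neq 0$; the only thing left to check is that $q_0^{(k)}$ is a holomorphic, nowhere-vanishing section of $\Lambda^{(k)}$ across $D_{\rm deg}$, which follows from the scale-invariance of its local expression $(\zeta_1-\zeta_2)^{1-2k/n}\bigl[(\zeta-\zeta_1)(\zeta-\zeta_2)\bigr]^{k/n-1}d\zeta$: it tends to the kernel generator on $\Ch_1$ and to zero on $\Ch_2$. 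To make your proof complete you would need either this construction or an equivalent plumbing-type estimate establishing your $Q^{(s)}$ together with the non-cancellation; everything else in your proposal matches the paper's argument.
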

 
This lemma implies the following
\begin{corollary}
The following relations between Prym-Tyurin class $\pt^{(k)}$, the class $\psi=c_1(L)$ and the pullback of class $\lambda_{n-k+1}$ from $\overline{\Mcal}_g$ to  $P\Mgno$ holds:
\begin{align}\la{PTL2}
&\pt^{(k)}=\lambda_{n-k+1} -\f{g-1}{n}(n-k)(2n-2k+1)\psi +\left(\f{1}{2}-\f{k}{n}\right)\delta_{{\rm deg}} \;,\\
&\hspace{3in} 1\leq k\leq [(n-1)/2]\,,\nonumber\\\la{PTL20}
&\pt^{(k)}=\lambda_{n-k+1} -\f{g-1}{n}(n-k)(2n-2k+1)\psi \;,\\
&\hspace{2.5in} [(n-1)/2]+1\leq k \leq n-1\,.\nonumber
\end{align}
\end{corollary}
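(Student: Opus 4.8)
The plan is to promote the formula \eqref{PTL1}, which holds on $P\Mgno\setminus D_{\rm deg}$, to an identity on all of $P\Mgno$ by feeding in the description of the degeneration of $\Phi_k$ along $D_{\rm deg}$ supplied by Lemma~\ref{lemPT}.

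First I would observe that, by \eqref{PTL1}, the two sides of \eqref{PTL2} (respectively \eqref{PTL20}) already agree on $P\Mgno\setminus D_{\rm deg}$, the term $\tfrac12-\tfrac kn$ times $\delta_{\rm deg}$ restricting to zero there. Since the complement of the open substack $U$ of Proposition~\ref{pr:can} has codimension at least $2$, the difference $\pt^{(k)}-\bigl(\lambda_{n-k+1}-\tfrac{g-1}{n}(n-k)(2n-2k+1)\psi\bigr)$ is a divisor class supported on $D_{\rm deg}$, hence of the form $c_k\,\delta_{\rm deg}$ for a rational number $c_k$ (for $(g,n)=(2,2)$ the lower range $1\le k\le[(n-1)/2]$ is vacuous, so only \eqref{PTL20} is at stake and the weights of the two components of $D_{\rm deg}$ play no role). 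When $[(n-1)/2]+1\le k\le n-1$, Lemma~\ref{lemPT} asserts that $\Phi_k$ is an isomorphism over the whole of $V\subset X(g,n)$; the Chern class computation establishing \eqref{PTL1} then goes through verbatim over $V$, and after applying ${\rm diff}_*$ it yields $c_k=0$, which is \eqref{PTL20}.

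For $1\le k\le[(n-1)/2]$ I would work over $V$, where $\Phi_k$ and the line bundle $T$ of Lemma~\ref{P4} are defined. The section $\det\Phi_k$ trivialises, away from $D_{\rm deg}$, the line bundle $L_k:=\det(\Lambda^{(k)}\otimes T^{\otimes(n-k)})^{-1}\otimes\det(\tilde\nu^*\Omega_g^{(n-k+1)})$, whose first Chern class is $\lambda_{n-k+1}-c_1(\Lambda^{(k)})-(g-1)(n-k)(2n-2k+1)\,c_1(T)$. By Lemma~\ref{lemPT}, the divisor of $\det\Phi_k$ is the (reduced) preimage of $D_{\rm deg}$ in $V$ with multiplicity $1-\tfrac{2k}{n}$; equating this with $c_1(L_k)$ and substituting $c_1(T)=\tfrac1n\psi$ (from $T^{\otimes n}\simeq L$) expresses $c_1(\Lambda^{(k)})$ on $V$ in terms of $\lambda_{n-k+1}$, $\psi$ and the preimage class of $D_{\rm deg}$.

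Finally I would apply ${\rm diff}_*$. The classes $\lambda_{n-k+1}$ and $\psi$ are pulled back from $P\Mgno$ and ${\rm diff}$ is birational, so they are fixed; ${\rm diff}_*c_1(\Lambda^{(k)})=\pt^{(k)}$ by definition; and the preimage of $D_{\rm deg}$ pushes forward to $\tfrac12\,\delta_{\rm deg}$, because by Lemma~\ref{P3} the map ${\rm diff}$ is of degree $\tfrac12$ along $D_{\rm deg}$ — concretely, the transverse coordinate $\zeta_1-\zeta_2$ on $X(g,n)$ squares to the transverse coordinate on $P\Mgno$, as in Section~\ref{sec:ext}. This gives $c_k=\tfrac12\bigl(1-\tfrac{2k}{n}\bigr)=\tfrac12-\tfrac kn$, completing \eqref{PTL2}. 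The step I expect to demand the most care is precisely this last one: one must keep the stack structures of $X(g,n)$ and $P\Mgno$ along $D_{\rm deg}$ straight — including the question of whether it is $\det\Phi_k$ or its inverse that carries the prescribed order of vanishing — so that the factor $\tfrac12$ enters exactly once and with the sign making the $\delta_{\rm deg}$-contribution positive, in agreement with the local normal form of Section~\ref{sec:ext}.
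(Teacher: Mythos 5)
Your argument is essentially the paper's own proof: \eqref{PTL1} off $D_{\rm deg}$, Lemma~\ref{lemPT} for the order $1-\tfrac{2k}{n}$ of $\det\Phi_k$ along $D_{\rm deg}$ (isomorphism in the upper range, giving \eqref{PTL20}), the relation $c_1(T)=\tfrac{1}{n}\psi$ from $T^{\otimes n}\simeq L$, and the degree-$\tfrac12$ behaviour of ${\rm diff}$ along $D_{\rm deg}$ from Lemma~\ref{P3} to convert that order into the coefficient $\tfrac12-\tfrac kn$, exactly as in the paper's remark following the corollary. The one point you leave implicit --- the sign with which the divisor of $\det\Phi_k$ enters the comparison of $c_1(\Lambda^{(k)})$ with $\lambda_{n-k+1}$ --- is treated just as briefly in the paper, so nothing is missing relative to its proof.
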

 
This corollary together with Theorem~\ref{th:deg} and Formula~(\ref{Mum}) completes the proof of Theorem~\ref{mainth}. 
\begin{remark}
Note the difference of a factor $2$ between the vanishing order of ${\rm det}\,\Phi_k$ and the contribution of $\delta_{\rm deg}$ in $\pt^{(k)}$. This is due to the fact that $V\to U$ is of degree $1/2$ along $D_{\rm deg}$.
\end{remark}

The proof of Lemma~\ref{lemPT} will occupy the two following sections.  We consider separately the cases of even $n$ and odd $n$. 
  
\subsection{Odd $n=2m+1$}

\subsubsection{Kernel and cokernel of $\Phi_k$}

Let $(C_0,\nd_0)$ be a point in $D_{{\rm deg}}$  and $f:\Ch_0\to C_0$ be the associated admissible covering. We recall that $\Ch_0$ is a curve with two components intersecting in one point. The two components $\Ch_1$ and $\Ch_2$ are of genera $\gh_1=m$ and $\gh_2=\gh-m$ (see Section~\ref{sec:dist}). We have denoted by $\nd_1$ the meromorphic $n$-differential on $C_1$ given by $(\zeta-\zeta_1)(\zeta-\zeta_2)d\zeta^n$. The curve $\Ch_1$ is parametrized by $y^n=(\zeta-\zeta_1)(\zeta-\zeta_2)$ and the $n$th root of $f^*\nd_1$ is given by $v_1=yd\zeta$. The covering $\Ch_2\to C_2$ is defined by $v_2^{\otimes n}=\nd_0$. Finally, let $v$ be the canonical differential satisfying $v^{\otimes n}=f^*\nd_0$: it is determined by $v=v_2$ on the component $\Ch_2$ and vanishes identically on the component $\Ch_1$.

 Denote the fiber of $k$th Prym-Tyurin vector bundle $\L^{(k)}$ over the point $(C_0,\nd_0)\in D_{{\rm deg}}$ by $\Omega_0^{(k)}$. We can decompose 
 $$\Omega_0^{(k)}= \Omega_1^{(k)}\oplus \Omega_2^{(k)}$$
 where  $\Omega_1^{(k)}$ is the space of holomorphic differentials on $\Ch_1$ which get multiplied by $\rho^k$ under the action of the automorphism $(y,\zeta)\to (\rho^k y,\zeta)$; the space $\Omega_2^{(k)}$ is the analogous space of holomorphic differentials on $\Ch_2$. 
 
The canonical differential $v$ vanishes identically on $\Ch_1$. Thus the kernel of $\Phi_k$ contains space $\Omega_1^{(k)}\otimes T^{\otimes n-k}$. On another hand, the restriction of the morphism $\Phi_k$ to the linear subspace $\Omega_{2}^{(k)}\otimes T^{\otimes n-k}$ is injective. Indeed the differential $v$ does not vanish identically on $\Ch_2$.  Thus
$$
{\rm ker}\, \Phi_k\simeq \Lambda_1^{(k)}\otimes T^{(n-k)}.
$$
over a generic locus of $D_{\rm deg}$.
 
We have $\dim \,\Omega_1^{(k)}=1$ for $k=1,\dots,m$. These one-dimensional spaces are generated by the holomorphic differentials on $\Ch_1$ given by $q_1^{(k)}= \f{d\zeta}{y^{2m+1-k}}$. For $k= m+1\dots 2m+1$ the eigenspaces $\Omega_1^{(k)}$ are trivial.
  Therefore, 
 $$
 \dim \Omega_2^{(k)} = \dim \Omega_0^{(k)}-1\;,\hskip0.7cm k=1,\dots, m\;;
 $$
 $$
 \dim \Omega_2^{(k)} = \dim \Omega_0^{(k)}\;,\hskip0.7cm k=m+1,\dots, 2m.
 $$
 We can also describe the images of $\Phi_k$.  For $k=m+1,\dots,2m$, the morphism $\Phi_k$ is an isomorphism from $\Omega_{2}^{(k)}\otimes T^{\otimes n-k}$ to  $H^{0}(C_0,\omega_{C_0}^{n+k-1})$. However, for $k=1,\dots,m$ the image of $\Omega_{2}^{(k)}\otimes T^{\otimes n-k}$ is the space of holomorphic $n-k+1$ differentials vanishing at $x_0$. 

 We have proved the following
 
 \begin{lemma}
 The kernel of $\Phi_k$ over $D_{deg}$ is the vector bundle $\Omega_1^{(k)}\otimes T^{\otimes n-k}$. This kernel is trivial for $k>m$ and $k=0$. If $1\leq k\leq  m$, the image of $\Phi_k$ is the vector bundle whose fibers are the $H^0(C,\omega^{n+k-1}(-x_0))$ where $x_0$ is the unique zero of order $2$.
 \end{lemma}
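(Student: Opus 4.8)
The plan is to reorganize the local analysis of the degenerate covers carried out above into the three assertions of the lemma, treating in turn the kernel of $\Phi_k$, the dimension of $\Omega_1^{(k)}$, and the image of $\Phi_k$. \emph{Kernel.} Write a vector of the fiber $\Omega_0^{(k)}=\Omega_1^{(k)}\oplus\Omega_2^{(k)}$ as $q=q_1+q_2$. By construction $\Phi_k$ sends $q\otimes v^{\otimes(n-k)}$ to the holomorphic $(n-k+1)$-differential on $C_0$ that descends from $q\,v^{\otimes(n-k)}$ on $\Ch_0$. Since $v$ vanishes identically on $\Ch_1$, this form is supported on $\Ch_2$, where it equals $q_2\,v_2^{\otimes(n-k)}$; as $\Ch_2$ is an integral curve and $v_2\not\equiv 0$, it vanishes if and only if $q_2=0$. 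Hence $\ker\Phi_k=\Omega_1^{(k)}$ over the generic point of $D_{\rm deg}$, and tensoring by $T^{\otimes(n-k)}$ gives $\ker\Phi_k=\Omega_1^{(k)}\otimes T^{\otimes(n-k)}$; as this identification respects the bundle structures, it defines a subbundle along $D_{\rm deg}$.

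\emph{Dimension of $\Omega_1^{(k)}$.} The component $\Ch_1$ is the cyclic curve $y^n=(\zeta-\zeta_1)(\zeta-\zeta_2)$; since $n=2m+1$ is odd it is totally ramified over $\zeta_1$, $\zeta_2$ and $\zeta=\infty$ and has genus $m$. Checking holomorphy at these three points shows that $d\zeta/y^{j}$ is a holomorphic differential on $\Ch_1$ precisely for $m+1\le j\le 2m$, so these $m$ forms are a basis of $H^0(\Ch_1,\omega_{\Ch_1})$. The form $d\zeta/y^{\,n-k}$ lies in the $\rho^k$-eigenspace and belongs to this basis exactly when $1\le k\le m$; since the $\rho^0$-eigenspace of $H^0(\Ch_1,\omega_{\Ch_1})$ is pulled back from $\Pl$ and hence zero, we get $\dim\Omega_1^{(k)}=1$ for $1\le k\le m$ and $\dim\Omega_1^{(k)}=0$ for $k=0$ and for $m+1\le k\le n-1$. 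Combined with the previous paragraph this gives the first two assertions.

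\emph{Image.} Let $1\le k\le m$. Then $\Phi_k$ is injective on $\Omega_2^{(k)}\otimes T^{\otimes(n-k)}$, so its image has dimension $\dim\Omega_0^{(k)}-1=(2n-2k+1)(g-1)-1$; an elementary Riemann--Roch count shows this equals the dimension of the subspace $H^0(C,\omega_C^{n-k+1}(-x_0))$ of $(n-k+1)$-differentials vanishing at $x_0$ (this twisted line bundle is non-special, as $n-k+1\ge 2$). So it suffices to prove that $q_2\,v_2^{\otimes(n-k)}$ always descends to a differential vanishing at $x_0$, and this is a local computation at the preimage $p$ of the node on $\Ch_2$. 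In the distinguished coordinate $s$ on $\Ch_2$ near $p$, with $u=s^n$ a coordinate on $C_2$ centred at $x_0$, the double zero of $\nd_0$ at $x_0$ gives ${\rm ord}_p\,v_2=n+1$, and $\sigma^*v_2=\rho v_2$ forces $\sigma^*s=\rho^{(n+1)/2}s+O(s^2)$; the eigenvalue condition $\sigma^*q_2=\rho^k q_2$ then gives ${\rm ord}_p\,q_2\equiv 2k-1\pmod{n}$, hence ${\rm ord}_p\,q_2\ge 2k-1$. Thus ${\rm ord}_p\bigl(q_2\,v_2^{\otimes(n-k)}\bigr)\ge (2k-1)+(n+1)(n-k)$, and rewriting this form in the coordinate $u$ one finds that the descended $(n-k+1)$-differential vanishes to order $\ge 1$ at $x_0$ (and exactly $1$ for generic $q_2$). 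Together with the dimension count, this identifies the image of $\Phi_k$ with $H^0(C,\omega_C^{n-k+1}(-x_0))$.

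I expect the bookkeeping in the last step to be the main obstacle: one must track correctly how the covering automorphism $\sigma$ acts on the distinguished coordinate near the node---it acts by $s\mapsto\rho^{(n+1)/2}s$, not by $s\mapsto\rho s$---since this is exactly what pins down ${\rm ord}_p\,q_2$ and hence the precise vanishing order $1$ of the image at $x_0$. An error of $1$ in this order would alter the image by $\pm x_0$ and would feed a wrong coefficient of $\delta_{\rm deg}$ into Lemma~\ref{lemPT}, and ultimately into Theorem~\ref{mainth}.
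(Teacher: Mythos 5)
Your proposal is correct and follows essentially the same route as the paper: the same splitting $\Omega_0^{(k)}=\Omega_1^{(k)}\oplus\Omega_2^{(k)}$, the same observation that $v$ vanishes identically on $\Ch_1$ but not on $\Ch_2$ (kernel vs.\ injectivity), and the same identification of $\Omega_1^{(k)}$ as spanned by $d\zeta/y^{\,n-k}$ exactly for $1\le k\le m$ (you also correctly read the exponent $n+k-1$ in the lemma statement as a typo for $n-k+1$). The only real difference is that you make explicit the step the paper merely asserts — that the image lies in $H^0(C,\omega_C^{\,n-k+1}(-x_0))$ — via the local computation at the node (${\rm ord}_p v_2=n+1$, $\sigma^*s=\rho^{(n+1)/2}s+O(s^2)$, hence ${\rm ord}_p q_2\equiv 2k-1\pmod n$) combined with the Riemann--Roch dimension count; these verifications are accurate and consistent with the paper's conventions $\sigma^*v=\rho v$, $\Lambda^{(k)}$ the $\rho^k$-eigenbundle.
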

Therefore the first part of Lemma~\ref{lemPT} and Formula (\ref{PTL20}) are valid for odd $n$.
 
\subsubsection{Study of $\Phi_k$ for $k=1\ldots m$}

We fix $1\leq k\leq m$. We have seen that the kernel and cokernel of $\Phi_k$ are of dimension $1$.  Let $(C_0,\nd_0)$ be a generic point in $D_{\rm deg}$. Let $W$ be an open neighborhood of $(C_0,\nd_0)$ in $X(g,n)$ with a non-vanishing section $q_0^{(k)}$ of $\Lambda^{(k)}|_{W}$ such that  $q^{(k)}_0|_{D_{\rm deg}}$ spans $\ker\,\Phi_k|_{W\cap D_{\rm deg}}$. The section $\Phi_k$ is of co-rank 1 along $D_{\rm deg}$, thus the vanishing order of ${\rm det}\,\Phi_k$ is equal to the vanishing order of $\Phi_k({q}_0^{(k)}\otimes v^{\otimes n-k})$ along $D_{\rm deg}$. Therefore, we will construct such a local section $q^{(k)}_0$ of $\Lambda^{(k)}$ and study the asymptotic behavior of $q^{(k)}_0\otimes v^{\otimes n-k}$ along $D_{\rm deg}$.
\bigskip

Let $\tilde{q}^{(k)}_0$ be
 a non-vanishing section of $\tilde{\nu}^*(\Omega^{(n-k+1)})$ over $W$ such that: for all $(C,\nd) \in D_{\rm deg}$, $q^{(k)}_0(C,\nd)$ is a differential that does not vanish at the double zero of $\nd$. Up to a choice of a smaller $W$, such a section exists. We label the coalescing zeros by $x_1$ and $x_2$. We chose the parameter of the curve $\zeta$ such that position of $x_1$ and $x_2$ are $\zeta_1$ and $\zeta_2$ and $\nd=(\zeta-\zeta_1)(\zeta-\zeta_2)(d\zeta)^n$ (see Section~\ref{sec:ext}). We define
$$
q^{(k)}_0= (\zeta_1-\zeta_2)^{-1+2k/n}\cdot\frac{f^*(\tilde{q}^{(k)}_0)}{v^k}.
$$
We recall that root $(\zeta_1-\zeta_2)^{2/n}$ is well defined, it is the integral of $v$ between $\xh_1$ and $\xh_2$ (see Lemma~\ref{coorddeg}). 

Over $W\setminus D_{\rm deg}$,  the differential $q^{(k)}_0$ is obviously a non-vanishing section of $\hat{\nu}^*(\Lambda^{(k)})$. Beside, along $D_{\rm deg}$ the differential $q^{(k)}_0$ vanishes identically on $\Ch_2$ because of the factor $(\zeta_1-\zeta_2)^{-1+2k/n}$. To compute the limit of $q^{(k)}_0$ on $\Ch_1$, we remark that $q_0^{(k)}$ is equivalent to
\begin{equation}\label{equidiff}
(\zeta_1-\zeta_2)^{1-2k/n} [(\zeta(x)-\zeta_1)(\zeta(x)-\zeta_2)]^{k/n-1}d\zeta\;.
\end{equation}
in coordinate $\zeta$. The differential~\eqref{equidiff} is invariant under simultaneous rescaling $\zeta\to \epsilon\zeta$, $\zeta_i\to \epsilon\zeta_i$, $i=1,2$.
 Therefore, as $x_{1,2}\to x_0$, the differential $q_0^{(k)}$ tends to the holomorphic differential 
 $$q_1^{(k)}=y^{-n+k} dx=
 (\zeta_1-\zeta_2)^{1-2k/n} [(\zeta(x)-\zeta_1)(\zeta(x)-\zeta_2)]^{k/n-1}d\zeta
$$
on the curve $\Ch_1$ (the generator of $\Omega_1^{(k)}$).

The image of $q_{0}^{(k)}\otimes v^{\otimes n-k}$ under $\Phi_k$ is  $(\zeta_1-\zeta_2)^{1-2k/n}\tilde{q}_0^{(k)}$ by construction. Thus the determinant of $\Phi_k$  is equivalent to a constant times $(\zeta_1-\zeta_2)^{1-2k/n}$. The parameter $(\zeta_1-\zeta_2)^2$ being a transverse parameter to $D_{\rm deg}$, we conclude that the vanishing order of $\Phi_k$ along $D_{\rm deg}$ is $1-\frac{2k}{n}$. This finishes the proof of Lemma~\ref{lemPT} for odd $n$.
 $\Box$
 
 \subsection{Even $n=2m$}
 
 The proof of Lemma~\ref{lemPT} is essentially identical to the odd case. Let $(C_0,\nd_0)$ be a generic point in $D_{\rm deg}$. Now we have $\gh_1=m-1$ and $\gh_2=\gh-m$ and the two components intersect in two points. Let $\Omega_0^{k}$ be the PT bundle over $(C_0,\nd_0)$ and let $\Omega_i^{(k)}$ be the subspace of $\Omega_0^{k}$ of holomorphic differentials supported on $\Ch_i$. We still have the decomposition:
 $$
 \Omega_0^{(k)}= \Omega_1^{(k)}\oplus \Omega_2^{(k)}
 $$
 except for $k=m$. The kernel $\Phi_k$ is  $\Omega_1^{(k)}\otimes T^{\otimes n-k}$. This kernel is trivial for $k=m+1,\dots,2m-1$. For $k=1,\dots,m-1$, the map $\Phi_k$ has co-rank one and the generator of  $\Omega_1^{(k)}$ is the differential  $q_1^{(k)}= y^{k-2m}d\zeta$ on the canonical covering $\Ch_1$ $y^{2m}=(\zeta-\zeta_1)(\zeta-\zeta_2)$ (see Section~\ref{sec:ext} for definition of the parameters).

 For $k=m$, the space $\Omega_0^{(k)}$ contains a $1$-dimension subspace of differentials of  a third kind. These are differentials with simple poles at the nodal points $x_0^{(1)}$ and $x_0^{(2)}$ (both on $\Ch_1$ and $\Ch_2$) and opposite residues. The image of such differential under $\Phi_m$ is a holomorphic $m$-differential. Beside $\Omega_1^{(m)}$ is trivial. Therefore the morphism $\Phi_k$ is also bijective for $k=m$.\bigskip
 
 Similarly to the case of odd $n$, for $k=1,\dots,m-1$, let $\tilde{q}_0^{(k)}$ be a section of $\tilde{\nu}^*(\Omega^{(n-k+1)})$ over a neighborhood of $(C_0,\nd_0)$ that does not vanish at the double zero of $\nd_0$. We define $
q^{(k)}_0= (\zeta_1-\zeta_2)^{-1+2k/n}\cdot\frac{f^*(\tilde{q}^{(k)}_0)}{v^k}.
$ and we study the asymptotic behavior of $\Phi_k( q_0^{(k)}\otimes v^{n-k})$ along $D_{\rm deg}$. 

As in the odd case, the differential $q^{(k)}_0$ is a non vanishing section of $\Lambda^{(k)}$ and $\Phi_k( q_0^{(k)}\otimes v^{n-k})=(\zeta_1-\zeta_2)^{-1+2k/n}\cdot \tilde{q}^{(k)}_0$ by construction. Therefore the vanishing order of ${\rm det}(\Phi_k)$ is given by ${1}-\frac{2k}{n}$ for $k=1,\ldots,m-1$.
 $\Box$
 
 \subsection{Obstruction to the extension of the Prym bundles to $\Mgno$}
 
In order to define the Prym-Tyurin classes we have constructed the space of admissible differentials $X(g,n)$ (see~\ref{sec:adm}). Indeed, the Prym-Tyurin bundles are naturally defined over $X(g,n)$. The following theorem explains the necessity of the introduction of the space $X(g,n)$.

\begin{theorem}
Let $g> 2$ and $k> 0$. There exists no vector bundle $\tilde{\Lambda}\to P\Mgno$ such that $\Lambda^{(k)}={\rm diff}^* \tilde{\Lambda}^{(k)}$, where ${\rm diff}:X(g,n)\to P\Mgno$ is the forgetful map.
\end{theorem}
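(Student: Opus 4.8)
\emph{Proof proposal.} The plan is to localise the whole question at the divisor $D_{\rm deg}$, which is the only place where ${\rm diff}\colon X(g,n)\to P\Mgno$ fails to be an isomorphism of stacks. By Proposition~\ref{pr:can} there are open substacks $V\subset X(g,n)$ and $U\subset P\Mgno$ with complements of codimension $\geq 2$, such that ${\rm diff}\colon V\to U$ is an isomorphism of coarse spaces, is an isomorphism of stacks away from $D_{\rm deg}$, and has degree $1/2$ over $D_{\rm deg}$. In the \'etale-local model of Section~\ref{sec:ext}, near a generic point of $D_{\rm deg}$ one has $V\cong (W\times\Delta')/(\Z/2\Z)$ and $U\cong W\times\Delta$, with ${\rm diff}(w,u)=(w,u^{2})$, where $u=\zeta_{1}-\zeta_{2}$ and the generator $\iota$ of the $\Z/2\Z$-stabiliser acts by $u\mapsto -u$. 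A standard descent argument (taking $\Z/2\Z$-invariants over the local ring of $D_{\rm deg}$) shows that a vector bundle $E$ on $V$ is of the form ${\rm diff}^{*}E_{0}$ if and only if $\iota$ acts trivially on the fibre $E|_{D_{\rm deg}}$; combined with the codimension and coarse-space statements this reduces the theorem to the claim that, for $g>2$, the automorphism $\iota$ acts non-trivially on $\Lambda^{(k)}|_{D_{\rm deg}}$.

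Next I would identify $\iota$ on the degenerate admissible cover. Over a generic point $(C_{0},\nd_{0})\in D_{\rm deg}$ one has $\Ch_{0}=\Ch_{1}\cup\Ch_{2}$ as in Sections~\ref{sec:ext} and~\ref{PTND}, with $\Ch_{1}$ the curve $y^{n}=(\zeta-\zeta_{1})(\zeta-\zeta_{2})$ over the rational bubble and $\Ch_{2}$ the canonical cover $v_{2}^{n}=\nd_{0}$ of the main component. Since $\iota$ is the automorphism interchanging the two colliding zeros and $g>2$ forces the pair $(C_{2},\nd_{0}|_{C_{2}})$ to have no non-trivial automorphism, $\iota$ is the identity on the base component $C_{2}$; hence $\iota|_{\Ch_{2}}=\sigma_{0}^{j}$ for some $j$, while $\iota|_{\Ch_{1}}$ covers the involution $\zeta\mapsto\zeta_{1}+\zeta_{2}-\zeta$ of the bubble and multiplies $y$ by a root of unity $\xi$. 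The relations $\iota^{2}=\mathrm{id}$, $\iota\sigma_{0}=\sigma_{0}\iota$, and compatibility at the node(s) of $\Ch_{0}$ — one node for odd $n$, two for even $n$ — determine $j$ and $\xi$. This is the step where one must track the distinguished roots of unity carefully, exactly as in Proposition~\ref{symp} and in the proof of Lemma~\ref{lemPT}, and I expect it to be the main obstacle: one has to control the limiting behaviour of the $n$-fold cover and of its marked covering automorphism as the two simple zeros collide.

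With $\iota$ in hand I would run the eigenvalue computation, using the description $\Lambda^{(k)}|_{D_{\rm deg}}=\Omega_{0}^{(k)}=\Omega_{1}^{(k)}\oplus\Omega_{2}^{(k)}$ from Section~\ref{PTND} (with the usual modification when $n$ is even and $k=n/2$, where a differential of the third kind appears). When $1\leq k\leq[(n-1)/2]$ the space $\Omega_{1}^{(k)}$ is one-dimensional, spanned by $q_{1}^{(k)}=y^{-(n-k)}d\zeta$; since $\iota$ sends $\zeta\mapsto\zeta_{1}+\zeta_{2}-\zeta$ and $y\mapsto\xi y$, one gets $\iota^{*}q_{1}^{(k)}=\varepsilon\, q_{1}^{(k)}$ with $\varepsilon\neq 1$, so $\iota$ is already non-trivial on this summand. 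When $[(n-1)/2]+1\leq k\leq n-1$ the fibre is $\Omega_{2}^{(k)}$, on which $\iota|_{\Ch_{2}}=\sigma_{0}^{j}$ acts by the scalar $\rho^{jk}$; here $g>2$ guarantees $\Omega_{2}^{(k)}\neq 0$, and the value of $j$ found above makes this a non-trivial root of unity. For even $n$ and $k=n/2$ one instead notes that $\iota$ exchanges the two preimages of the node, hence sends the normalised third-kind differential to its negative. In every case $\iota$ acts non-trivially on $\Lambda^{(k)}|_{D_{\rm deg}}$, and therefore $\Lambda^{(k)}$ is not a pullback along ${\rm diff}$. Once $\iota$ is correctly identified, both the descent reduction of the first paragraph and this eigenvalue computation are routine.
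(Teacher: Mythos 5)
Your proposal takes a genuinely different route from the paper. The paper never uses the stacky $\Z/2$ along $D_{\rm deg}$ at all: it locates the obstruction on divisors $D_m\subset X(g,n)$ lying over the deeper strata $\Mgn[(m,1,\ldots,1)]$ with $m>2$, $\gcd(m,n)=1$, which ${\rm diff}$ contracts to codimension $\geq 2$ in $P\Mgno$, and it shows that $\det\Phi_k$ acquires zeros along such contracted divisors, which is incompatible with $c_1(\Lambda^{(k)})$ being a pullback of a class from $P\Mgno$. Your reduction is legitimate as far as it goes in one direction (a pullback bundle has trivial stabilizer action on fibres, and disproving the pullback property on $V$ disproves it on $X(g,n)$), and note also that your opening claim is inaccurate: $D_{\rm deg}$ is not the only locus where ${\rm diff}$ fails to be an isomorphism, since ${\rm diff}$ contracts entire divisors of $X(g,n)$ over strata with a zero of order $m>2$; this is precisely where the paper finds the obstruction.

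The genuine gap is the step you yourself flag and then bypass: you never determine $\iota$ (the exponents $j$ and $\xi$), and the conclusion ``in every case $\iota$ acts non-trivially on $\Lambda^{(k)}|_{D_{\rm deg}}$'' is asserted, not proved, and is doubtful in part of the range. For odd $n$ and $[(n-1)/2]+1\leq k\leq n-1$ the fibre along $D_{\rm deg}$ is just $\Omega_2^{(k)}$, and nothing in your constraints forces $j\neq 0$: the single node is fixed by every power of $\sigma$ on each side, and the natural comparison automorphism obtained by following $u=\zeta_1-\zeta_2$ to $-u$ is the identity on $\Ch_2$ (it only acts on the bubble cover, where it rescales $y$ by a root of unity; it is forced to move $\Ch_2$ only for even $n$, when it swaps the two node preimages). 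With $\iota|_{\Ch_2}={\rm id}$ the action on $\Omega_2^{(k)}$, and likewise on the fibre of $T$ (spanned by $v_0$, supported on $\Ch_2$), is trivial, so your criterion detects nothing for those $k$. This is consistent with Lemma~\ref{lemPT}: for those $k$ the map $\Phi_k$ is an isomorphism over all of $V$, so $\Lambda^{(k)}|_V\cong \tilde{\nu}^*\Omega_g^{(n-k+1)}\otimes T^{\otimes -(n-k)}$ and any failure of the pullback property is invisible near $D_{\rm deg}$; it lives over the deeper strata, which is why the paper argues there. Even in the range $1\leq k\leq [(n-1)/2]$, where the bubble eigenspace $\Omega_1^{(k)}$ is present and non-triviality is plausible, the eigenvalue $\varepsilon$ depends delicately on the choice of $\iota$ modulo deck transformations (your two natural half-loop choices differ by a deck transformation), so the claim $\varepsilon\neq 1$ still requires the careful root-of-unity bookkeeping you postpone. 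As it stands the proposal does not prove the theorem.
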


\begin{proof}
Suppose that there exists $\tilde{\Lambda}\to P\Mgno$ such that $\Lambda^{(k)}={\rm diff}^* \tilde{\Lambda}^{(k)}$. Then in particular $\pt^{k}=c_1(\tilde{\Lambda}^{(k})$ and thus $c_1(\Lambda^{k}) = {\rm diff}^*\pt^{k}$. We will prove that this equality does not hold for $g>2$ and $k>0$.

Let $m>2$ and $\mu$ be the partition of $n(2g-2)$ given by $(m,1,\ldots,1)$. We denote by $\Mgn[m]$ the locus $\Mgn[\mu]\subset \Mgno$. We suppose that ${\rm gcd}(m,n)=1$. We denote by $D_m$ the preimage of $\Mgn[m]$ in $X(g,n)$ under ${\rm diff}$. The locus $D_m$ is a divisor whose generic points are elements $(C,\nd,x_i,f:\Ch\to C)\in X(g,n)$ such that:
\begin{itemize}
\item the curve $C$ is a nodal curve with two components: $C_2$ isomorphic to $C$ with $n(2g-2)-m$ marked points attached in one node to a rational component $C_1$ at the zero of order $m$;
\item the $n$-differential $\nd$ is identically zero on $C_1$ and has profile $\mu$ on $C_1$;
\item the covering curve $\Ch\to C$ has two component $\Ch_2$ and $\Ch_1$. The component $\Ch_2$ determined by the $\nd$ as in Section~\ref{sec:ext} and $\Ch_1\to C_1$ is the unique $n$-sheeted ramified covering maximally ramified at the marked points and the node.
\end{itemize}

Moreover the canonical root $v$ of $f^*\nd$ vanishes identically on $\Ch_1$ and has a zero of order $m+n-1$ at the preimage of the zero of order $m$. Thus the morphism $\Phi_k: \Lambda^{(k)} \otimes T^{\otimes (n-k)}  \to     \tilde{\nu}^* \Omega_g^{(n-k+1)} $ has a non-empty co-kernel along $D_m$ for $m$ large enough. Therefore the line bundle 
$${\rm det}(\Lambda^{(k)}) \otimes {\rm diff}^{*}\left({\rm det} \tilde{\Lambda}^{(k)\vee}\right)$$ 
has a global section which vanishes along divisors contained in $X(g,n)\setminus V$. Thus $\Lambda^{(k)}\neq  {\rm diff}^{*}\tilde{\Lambda}^{(k)}$. 
\end{proof}

\noindent{\bf Acknowledgements.} DK and PZ acknowledge the hospitality of the Max-Planck-Institut f\"ur Mathematik in Bonn where this work began in 2011, 
and DK also thanks the Max-Planck Institute for Gravitational Physics (Albert Einstein Institute) in Golm where this work was continued. Research of DK was supported in part by the Natural Sciences and Engineering Research Council of Canada grant RGPIN/3827-2015, by the FQRNT grant ``Matrices Al\'eatoires, Processus Stochastiques et Syst\'emes Int\'egrables'' (2013PR 166790), by the Alexander von Humboldt Stiftung and by the Chebyshev Laboratory of St.~Petersburg State University. Research of Section \ref{Bergmansec} was supported by the Russian Science Foundation grant 16-11-10039.
The authors are very grateful to Dimitri Zvonkine for his helpful advice and comments. AS is also grateful to Dawei Chen, Charles Fougeron, Martin Mo\"eller and Anton Zorich for very fruitful conversations.

\end{document}